\documentclass[12pt]{amsart}

\oddsidemargin=0in
\evensidemargin=0in
\topmargin=-.5in
\textheight=8.5in
\textwidth=6.5in

\usepackage{amsmath}
\usepackage{amsfonts}
\usepackage{amssymb}
\usepackage{latexsym}
\usepackage{graphicx}
\usepackage{enumerate}
\usepackage{multirow}
\usepackage{subfigure}

\newtheorem{theorem}{Theorem}
\newtheorem{proposition}[theorem]{Proposition}
\newtheorem{lemma}[theorem]{Lemma}
\newtheorem{cor}[theorem]{Corollary}
\newtheorem{remark}{Remark}


\usepackage{mathtools}

\makeatletter
\DeclareRobustCommand\widecheck[1]{{\mathpalette\@widecheck{#1}}}
\def\@widecheck#1#2{%
    \setbox\z@\hbox{\m@th$#1#2$}%
    \setbox\tw@\hbox{\m@th$#1%
       \widehat{%
          \vrule\@width\z@\@height\ht\z@
          \vrule\@height\z@\@width\wd\z@}$}%
    \dp\tw@-\ht\z@
    \@tempdima\ht\z@ \advance\@tempdima2\ht\tw@ \divide\@tempdima\thr@@
    \setbox\tw@\hbox{%
       \raise\@tempdima\hbox{\scalebox{1}[-1]{\lower\@tempdima\box
\tw@}}}%
    {\ooalign{\box\tw@ \cr \box\z@}}}
\makeatother

\newcommand{\eps}{\epsilon}
\newcommand{\ep}{\epsilon}

\DeclareMathOperator{\sech}{sech}

\newcommand{\per}{{\text{per}}}

\newcommand{\be}{\begin{equation}}
\newcommand{\ee}{\end{equation}}
\newcommand{\bes}{\begin{equation*}}
\newcommand{\ees}{\end{equation*}}
\newcommand{\mand}{\quad \text{and}\quad}

\newcommand{\R}{{\mathbb{R}}}
\newcommand{\RM}{{\mathbb{R}}}
\newcommand{\C}{{\mathbb{C}}}

\newcommand{\N}{{\bf{N}}}

\renewcommand{\L}{{\mathcal{L}}}
\renewcommand{\S}{{\mathcal{S}}}
\newcommand{\A}{{\mathcal{A}}}
\renewcommand{\P}{{\mathcal{P}}}
\newcommand{\G}{{\mathcal{G}}}

\newcommand{\F}{{\mathcal{F}}}

\newcommand{\Rc}{{\mathcal{R}}}
\newcommand{\M}{{\mathcal{M}}}
\renewcommand{\O}{{\mathcal{O}}}

\newcommand{\J}{{\mathbf{J}}}

\renewcommand{\H}{{\mathcal{H}}}

\renewcommand{\tilde}{\widetilde}
\renewcommand{\hat}{\widehat}
\renewcommand{\check}{\widecheck}

\newcommand{\bunderbrace}[2]{%
  \begin{array}[t]{@{}c@{}}
  \underbrace{#1}\\
  #2
  \end{array}
}

\title{Generalized Solitary Waves in the Gravity-Capillary Whitham Equation}

\author{Mathew A. Johnson}
\address{Department of Mathematics, University of Kansas, Lawrence, KS 66049}
\email{matjohn@ku.edu}

\author{J. Douglas Wright}
\address{Department of Mathematics, Drexel University, Philadelphia, PA 19104}
\email{jdoug@math.drexel.edu}

\begin{document}

\begin{abstract}
We study the existence of traveling wave solutions to a unidirectional shallow water model which incorporates the full linear dispersion relation for both gravitational and capillary restoring forces.
Using functional analytic techniques, we show that for small surface tension (corresponding to Bond numbers between $0$ and ${1}/{3}$)
there exists small amplitude solitary waves that decay to asymptotically small periodic waves at spatial infinity.  The size of the oscillations
in the far field are shown to be small beyond all algebraic orders in the amplitude of the wave.  We also present numerical evidence, based on the recent
analytical work of Hur \& Johnson, that the asymptotic end states are modulationally stable for all Bond numbers between $0$ and $1/3$. 
%
\end{abstract}

\maketitle

\section{Introduction}

\subsection{``Full dispersion" models}
It is well known that the Korteweg-de Vries (KdV) equation 
\begin{equation}\label{kdv}
u_t+\sqrt{gd}\left(1+\frac{1}{6}d^2\partial_x^2\right)u_x+uu_x,
\end{equation}
approximates the full water wave problem in the small amplitude, long wavelength regime \cite[Section 7.4.5]{LannesBook}
\cite{Schneider-Wayne00} \cite{Schneider-Wayne02} \cite{Dull}.
Here, $u(x,t)$ corresponds to the fluid height at spatial position $x$ at time $t$, $d$ corresponds
to the undisturbed depth of the fluid, and $g$ is the acceleration due to gravity. 
At least in part, the agreement in this asymptotic regime can be understood by noting that the phase speed of the water wave problem 
expands for $|kd|\ll 1$ as
\[
c_{ww}(k):=\pm\sqrt{g\frac{\tanh(kd)}{k}}=\pm\sqrt{gd}\left(1-\frac{1}{6}k^2d^2\right)+\mathcal{O}(|kd|^4),
\]
so that the KdV phase speed agrees to second order in $|kd|$ with that of the full water wave problem.  

The KdV equation admits both solitary and periodic traveling wave solutions which are 
nonlinearly stable in appropriate senses \cite{Bona75} \cite{PW94} and these results have pointed the way towards (at least some) similar results for the full water wave problem
\cite{Beale79} \cite{PS16}.
Naturally, however, the KdV phase speed is a terrible approximation of $c_{ww}$ for
even moderate frequencies.  It should come as no surprise then that  KdV fails to exhibit many high-frequency phenomena\footnote{That is, occurring for $kd$ not asymptotically small.}
such as wave breaking -- the evolutionary formation of bounded solutions with infinite gradients -- and peaking -- the existence of bounded, steady solutions with a singular point, such as a peak or a cusp.

The above observations led Whitham \cite{Whitham_book} to state {\it ``It is intriguing to know what kind of simpler mathematical equations (than the physical problem) could include
[peaking and breaking]."}
In response to his own question, Whitham put forward the model
\begin{equation}\label{whitham}
u_t+\left(\mathcal{W}_{gd}u+u^2\right)_x=0, \quad u = u(x,t) \in \R,\ x \in \R,\ t \in \R,
\end{equation}
where here $\mathcal{W}$ is a Fourier multiplier operator on $L^2(\RM)$ defined via
\[
\mathcal{F}\left(\mathcal{W}_{gd}f\right)(k)=\sqrt{\frac{g\tanh(k d)}{k}}~\hat{f}(k):
\]
see \cite[p. 477]{Whitham_book}
By construction, the above pseudodifferential equation, modernly referred to as the ``Whitham equation" or ``full dispersion equation", has a  phase speed that agrees exactly with that of the full  water wave problem.
Since \eqref{whitham} balances both the full water wave dispersion with a canonical shallow water nonlinearity, Whitham conjectured that the equation \eqref{whitham} would be capable
of predicting both breaking and peaking of waves.

And in fact it does.  The Whitham equation \eqref{whitham} has recently been shown\footnote{See also \cite{NS94} and \cite{CE98} for related results, and the discussion in \cite{Hur1}.} 
to exhibit wave breaking \cite{Hur1}, as well as to admit both periodic \cite{EK09} and solitary \cite{EGW,SW} waves.
In particular, 
in \cite{EK13,EW}, the authors conducted a detailed global bifurcation analysis of periodic traveling waves for \eqref{whitham} and concluded 
that the branch of smooth periodic waves terminates in a non-trivial cusped solution -- bounded solution with unbounded derivative\footnote{The wave behaves like $|x|^{1/2}$ 
near the cusp.}-- that is monotone and smooth on either side of the cusp.  
Additionally, its well-posedness was addressed in \cite{EEP15}, and in \cite{HJ15} it was shown that \eqref{whitham} bears 
out the famous Benjamin-Feir, or modulational, instability of small amplitude periodic
traveling waves; see also the related numerical work \cite{SKCK14} on the stability of large amplitude periodic waves.  
Taken together it is clear that, regardless of its rigorous relation to the full water wave problem\footnote{The relevance of the Whitham equation as a model
for water waves was recently studied in \cite{MDK15}, where it was found to perform better than the KdV and BBM equations in describing the surface of waves in the
intermediate and short wave regime.}, the fully dispersive model \eqref{whitham} admits many interesting
high-frequency phenomena known to exist in the full water wave problem. 

\subsection{Including surface tension} 
It is thus natural to consider the existence and behavior of solutions when additional physical effects are included.  In this paper, we incorporate surface tension and consider
the following pseudodifferential equation
\be\label{dimW}
u_t + (\M_{gd\tau} u + u^2)_x = 0.
\ee
Here, $u$, $x$, and $t$ are as in \eqref{whitham} above, and $\mathcal{M}_{gd\tau}$ is a Fourier multiplier operator on $L^2(\RM)$ with symbol
\[
m_{gd\tau}(k) = \sqrt{\left(g+\tau k^2\right) {\tanh(k d) \over k}}.
\]
This symbol gives exactly the phase speed for the full gravity-capillary wave problem in the irrotational setting \cite{J97,Whitham_book}.
The parameter $\tau>0$ is the coefficient of surface tension,
while both $g$ and $d$ are as in \eqref{whitham}.
The  properties of the symbol $m_{gd\tau}$ above depend on the non-dimensional ratio 
\[
\beta:=\tau/gd^2,
\]
which is referred to as the Bond number.  
When $\beta>1/3$, corresponding to ``strong" surface tension,
the phase speed $m_{gd\tau}(k)$ is monotone increasing for $k>0$ with high-frequency asymptotics $m_{gd\tau}(k)\approx  |\tau k|^{1/2}$ for $|k|\gg 1$, while for ``weak" surface tension,
corresponding to $0<\beta<1/3$, $m_{gd\tau}(k)$ has a unique positive global minimum, after which it is monotonically increasing with the same high-frequency behavior: see Figure \ref{fig:disp}.
Concerning its relation to the full water wave equations, see \cite{Carter17}, where there the author studies the accuracy of \eqref{dimW} in modeling real-world experiments of waves on shallow water.

\begin{figure}
\begin{center}
(a)~~\includegraphics[scale=0.5]{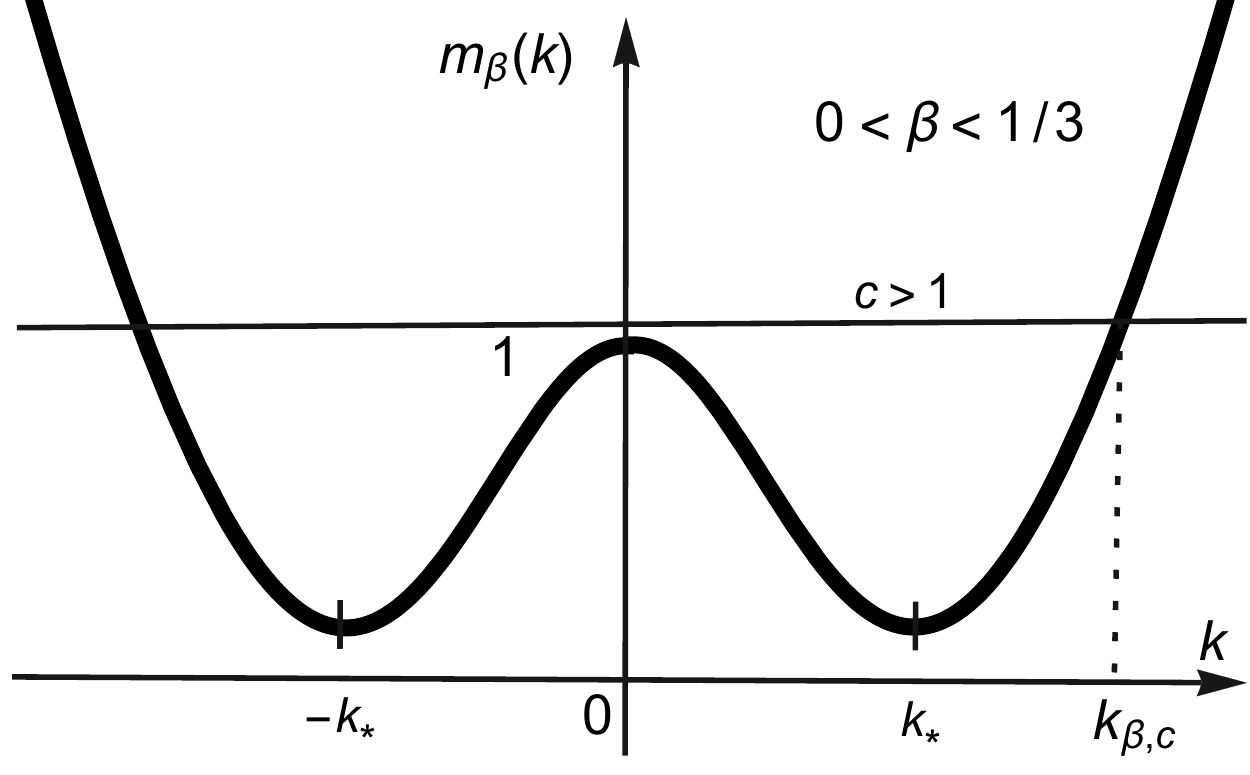}\qquad(b)~~\includegraphics[scale=0.5]{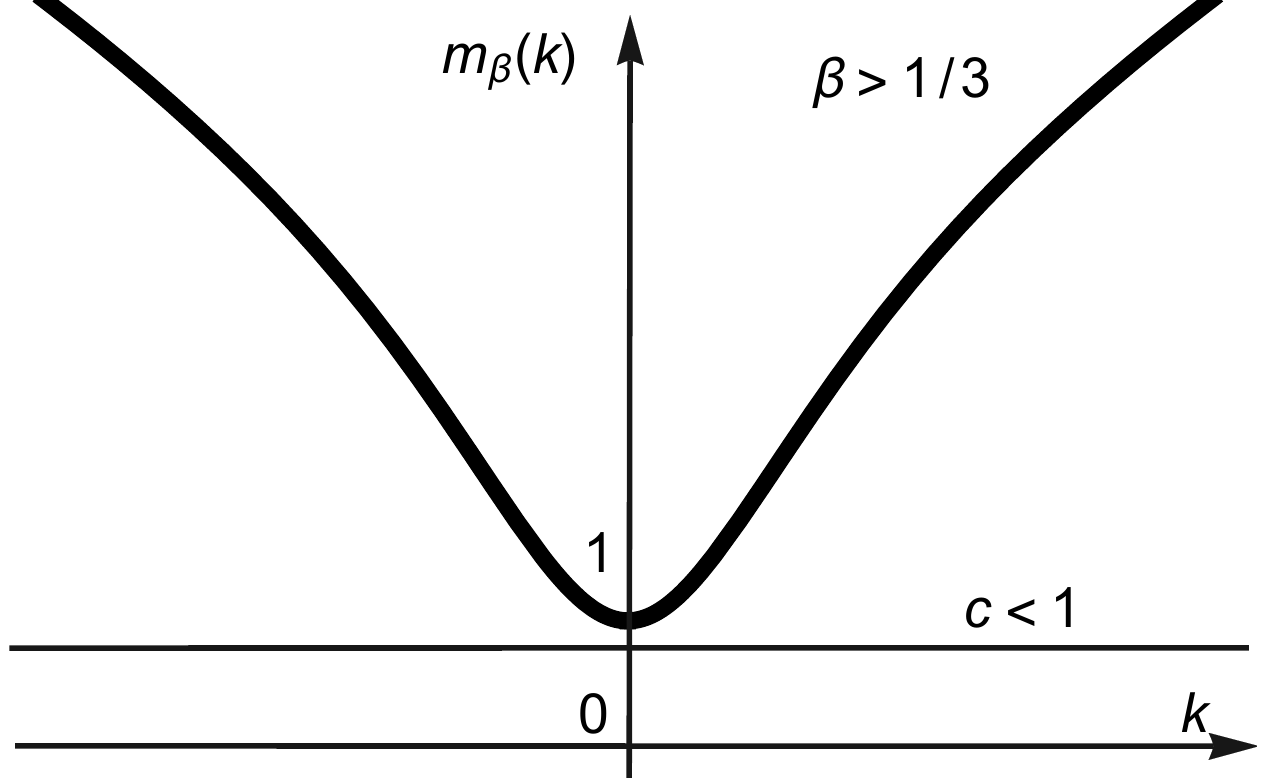}
\end{center}
\caption{Schematic drawings of the linear phase speed $m_\beta(k)$ associated to \eqref{dimW} for both (a) small surface tension, corresponding
to $\beta\in(0,1/3)$, and (b) large surface tension, corresponding to $\beta>1/3$.}\label{fig:disp}
\end{figure}

\

In the full gravity-capillary wave problem with large surface tension ({\it i.e.} $\beta>1/3$) there exist subcritical\footnote{In this context, a traveling wave is ``subcritical"
if its speed is less than the long wave speed $c_{lw}:=m_\beta(0)=\sqrt{gd}.$ If the traveling wave's speed is greater than $c_{lw}$, then it is said to be ``supercritical."}
solitary waves of depression (that is, they are asymptotically zero but with a unique critical point corresponding to a strictly negative absolute minimum). See, for instance, \cite{AK85,AK89}.
When $\beta\in(0,1/3)$, however, considerably less is known about the existence of 
genuinely localized solitary waves.  
What is known is that supercritical  \emph{generalized solitary waves}  (also called \emph{nanopterons}) exist in this setting. 
That is, waves that are roughly the superposition of a solitary wave and a (co-propagating) periodic wave of substantially smaller amplitude, dubbed ``the ripple." See \cite{Sun91} \cite{Beal91}.\footnote{When $\beta$ is less than, but close to $1/3$, very precise rigorous asymptotics on the size of the ripple have been established, see \cite{SS93} \cite{Lombardi}.}
In this paper, we establish analogs of these results for the gravity-capillary Whitham equation \eqref{dimW}.  We also note that the existence and stability of periodic
traveling waves in \eqref{dimW} have recently been investigated in \cite{HJ15b,RKH17}.  In Section \ref{S:stability} below, we will apply these stability 
results to make observations concerning the stability of the generalized solitary waves constructed here.  

\subsection{Formal computations and the main results}

A routine nondimensionalization of \eqref{dimW} converts it to
\be\label{nondimW}
u_t + \left(\mathcal{M}_{\beta} u + u^2\right)_x = 0,
\ee
where $\mathcal{M}_\beta$ is the Fourier multiplier operator with symbol
\be
m_\beta(k) := \sqrt{\left(1+\beta k^2\right) {\tanh(k ) \over k}}.
\ee
We will henceforth be working with this version of the system. 
Substituting the traveling wave ansatz $u(x,t) = w(x-ct)$ into \eqref{dimW} yields, after one integration the nonlocal profile equation
\be\label{TWE1}
(\M_\beta-c) w + w^2 = 0.
\ee

We are interested in long wavelength/small amplitude solutions of \eqref{TWE1}. Consequently we expect the wave speed $c$ to be close
to the long wave speed $c_{lw}$, which in the nondimensionalized problem is exactly one. And so for $0<\eps\ll 1$, we make a ``long wave/small amplitude/nearly critical" scaling of \eqref{TWE1} by setting 
\be\label{lw scaling}
c = 1 + {\gamma_\beta}\epsilon^2 \mand w(y) = \ep^2 W( \ep y).
\ee
In the above we have made the (convenient) choice 
\[
\gamma_\beta:=-{1 \over 2} m_\beta''(0) = {1-3\beta \over 6}.
\]
Consequently,  if $\beta>1/3$ the solutions we are looking for are slightly subcritical, since $\gamma_\beta < 0$. If $\beta \in (0,1/3)$ then the solutions are supercritical.
After applying \eqref{lw scaling}, \eqref{TWE1} becomes
\be\label{TWE2}
{(\M^\ep_\beta-1 -\gamma_\beta \ep^2)} W + \ep^2 W^2 =0
\ee
where $\M^\ep_\beta$ is a Fourier multiplier with symbol $ m_\beta(\ep K)$.

For $0<\eps\ll 1$ we have the expansion
\be\label{expando}
m_\beta(\ep K) = 1 - \gamma_\beta \ep^2 K^2 + \O(\ep^4K^4).
\ee
With the usual Fourier correspondence of $\partial_X$ and $i K$, the above indicates the following formal expansion:
\be\label{expando2}
\M_\beta^\ep= 1 + \gamma_\beta \ep^2 \partial_X^2 + \O(\ep^4 \partial_X^4).
\ee
Therefore the (rescaled) profile equation
\eqref{TWE2} formally looks like
\begin{equation}\label{formalprofile}
\left(\gamma_\beta  W'' - \gamma_\beta  W +  \O(\ep^2)\right)  + \ep^2 W^2  =0.
\end{equation}
Putting $\ep = 0$, it follows that the solution 
satisfies the ODE
\be\label{KDVTWE}
W'' - W + \gamma_\beta^{-1} W^2 = 0.
\ee
We immediately recognize \eqref{KDVTWE} as the profile equation  associated with solitary wave solutions 
of the (suitably rescaled) KdV equation \eqref{kdv}.  
In particular,  \eqref{KDVTWE} admits a unique non-trivial even solution in $L^2(\R)$ given by 
\[
\sigma_\beta(X) :=  {3 \gamma_\beta \over 2 } \sech^2\left( {X \over 2} \right) = {1-3 \beta \over 4}  \sech^2\left( {X \over 2} \right). 
\]
Note that $\sigma_\beta(X)$ is positive when $\beta\in(0,1/3)$ and negative when $\beta > 1/3$, corresponding to solitary waves 
of elevation and depression, respectively.

\

Our main goal is to analyze to how  $\sigma_\beta$ deforms for $0<\eps\ll 1$.  The main difficulty in the analysis is that the expansion \eqref{expando} is \emph{not uniform} in
$K$ and, as a consequence, that the ODE \eqref{KDVTWE} is necessarily singularly perturbed by the $\mathcal{O}(\eps^2)$ terms in \eqref{formalprofile}.  

It turns out that when $\beta > 1/3$ there is a straightforward way to ``desingularize" the problem. The main observation is that
the multiplier for the operator $\M_\beta - c$ is non-zero for all wave numbers when $c<1$ ({\it i.e.} is subcritical) when $\beta > 1/3$.
Therefore the linear part of \eqref{TWE1} can be inverted. 
Doing so and then implementing the scaling \eqref{lw scaling} results in a system which is not singularly perturbed in $\ep$
and one can use the implicit function theorem to continue the solution $\sigma_\beta$ to $\ep > 0$.
In the recent paper by Stefanov \& Wright \cite{SW} 
this strategy (which was inspired by \cite{FP99,FML15}) was deployed for a class of pseudodifferential equations which includes \eqref{TWE1} when $\beta > 1/3$.
Their main result can be directly applied here.  
We explain this in greater detail in Section \ref{depression} but, for now, here is our result:
\begin{cor}\label{depression cor} 
There exist subcritical solitary waves of depression for \eqref{nondimW} when the capillary effects are strong. Specifically, for all $\beta > 1/3$ and all $\ep$ sufficiently close to zero, there exists a small amplitude, localized, smooth, even function $R_\ep$ 
such that 
$$
w_\ep(x) = -\left({3\beta -1\over 4}\right)\ep^2  \sech^2\left( \ep x \over 2 \right) + R_\ep(\ep x) \mand c_\ep = 1 - \left( {3\beta -1 \over 6}  \right) \ep^2
$$
solve
\eqref{TWE1}.
For any $r \ge 0$ there exists $C_r>0$ such that  $$\| R^{(r)}_\ep\|_{L^2(\R)} \le C_r \ep^4.$$
$R_\ep$ is the unique function with the aforementioned properties.
\end{cor}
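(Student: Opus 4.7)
The plan is to verify the hypotheses of the general solitary-wave existence theorem of Stefanov--Wright \cite{SW} in the present setting, as the excerpt anticipates. The enabling structural fact is that when $\beta > 1/3$, the symbol $m_\beta$ is strictly increasing on $(0,\infty)$ with $m_\beta(0)=1$, so $m_\beta(k) - c$ is uniformly bounded away from zero whenever $c<1$. In particular, for the speed $c_\ep = 1+\gamma_\beta\ep^2 < 1$ (recall $\gamma_\beta<0$), the operator $\M_\beta^\ep - c_\ep$ is boundedly invertible on every $H^r(\R)$, and I can rewrite the rescaled profile equation \eqref{TWE2} as the regular fixed-point problem
\be
F_\ep(W) := W - \mathcal{T}_\ep(W^2) = 0, \qquad \mathcal{T}_\ep := -\ep^2\bigl(\M_\beta^\ep - 1 - \gamma_\beta \ep^2\bigr)^{-1}.
\ee

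The central technical step is to show that $\mathcal{T}_\ep$, whose Fourier symbol is $-\ep^2/(m_\beta(\ep K)-1-\gamma_\beta\ep^2)$, converges in operator norm on $H^r(\R)$ to the KdV-type resolvent $\mathcal{T}_0 := \gamma_\beta^{-1}(1-\partial_X^2)^{-1}$, with quantitative bound $\|\mathcal{T}_\ep - \mathcal{T}_0\|_{H^r \to H^r} \le C_r \ep^2$. For low frequencies $|\ep K|\le 1$ this follows from the expansion \eqref{expando} together with its next-order correction; for high frequencies $|\ep K|\ge 1$ one exploits both the uniform lower bound $m_\beta(\ep K)-1 \gtrsim 1$ and the $|k|^{1/2}$ growth of $m_\beta$, which together ensure that $\mathcal{T}_\ep$ is actually smoothing uniformly in $\ep$. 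This convergence is the main obstacle of the argument; once it is in hand, $F_0(W) = W - \gamma_\beta^{-1}(1-\partial_X^2)^{-1}(W^2)$ is equivalent, after applying $1-\partial_X^2$, to \eqref{KDVTWE}, so $F_0(\sigma_\beta) = 0$ and we are set up to perturb.

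To close the argument via the implicit function theorem at $(\ep,W)=(0,\sigma_\beta)$ I would restrict to the even subspace $H^r_{\mathrm{even}}(\R)$. The Fr\'echet derivative $DF_0(\sigma_\beta) V = V - 2\gamma_\beta^{-1}(1-\partial_X^2)^{-1}(\sigma_\beta V)$ is conjugate (after multiplication by $1-\partial_X^2$) to the standard linearized KdV operator $-\partial_X^2 + 1 - 2\gamma_\beta^{-1}\sigma_\beta$ at its soliton, whose kernel on $H^r(\R)$ is spanned by the odd translation mode $\sigma_\beta'$; hence $DF_0(\sigma_\beta)$ is an isomorphism of $H^r_{\mathrm{even}}$. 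The IFT then produces a unique even $W_\ep = \sigma_\beta + \widetilde R_\ep$ solving the fixed-point problem, with
\[
\|\widetilde R_\ep\|_{H^r} \le C_r \|F_\ep(\sigma_\beta)\|_{H^r} = C_r \|(\mathcal{T}_0 - \mathcal{T}_\ep)(\sigma_\beta^2)\|_{H^r} \le C_r' \ep^2
\]
by the operator-norm convergence together with the Schwartz-ness of $\sigma_\beta^2$. Setting $R_\ep(X) := \ep^2 \widetilde R_\ep(X)$ and undoing the scaling \eqref{lw scaling} then produces $w_\ep$ and $c_\ep$ in the stated form, together with the advertised bound $\|R_\ep^{(r)}\|_{L^2} = \ep^2 \|\widetilde R_\ep^{(r)}\|_{L^2} \le C_r \ep^4$; uniqueness of $R_\ep$ is inherited from the local uniqueness in the IFT.
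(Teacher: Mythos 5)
Your argument is correct and it does reproduce the statement, but it is worth noting that the paper's own proof of Corollary~\ref{depression cor} is much shorter: it simply verifies the hypotheses of Theorem~\ref{ASthm} (the Stefanov--Wright existence theorem, cited as a black box), invokes its conclusion to get an $E^1_0$ solution with $\|\rho_\ep\|_{1,0}\le C\ep^2$, and then appeals to a bootstrap argument (exploiting the $|k|^{1/2}$ growth of $m_\beta$) to upgrade to smoothness. What you do instead is re-derive, in a self-contained way, the core mechanism behind \cite{SW}: invert $\M_\beta^\ep - 1 - \gamma_\beta\ep^2$ (possible because $m_\beta-c$ is bounded away from zero for $c<1$ and $\beta>1/3$), rewrite \eqref{TWE2} as a regular fixed-point problem $W = \mathcal T_\ep(W^2)$, establish the symbol estimate $\sup_K|t_\ep(K)-t_0(K)|\le C\ep^2$ (and hence the $H^r\to H^r$ operator bound uniformly in $r$, since the norm of a multiplier is the sup of its symbol), and close by the IFT at $\sigma_\beta$, using the standard fact that $\ker(-\partial_X^2+1-2\gamma_\beta^{-1}\sigma_\beta)=\mathrm{span}\{\sigma_\beta'\}$ and restricting to even functions. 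Conceptually this is exactly the desingularization the paper ascribes to \cite{SW}, so the ``route'' is the same at heart; but carrying it out at the level of $H^r$ for arbitrary $r$ buys you the full scale of estimates $\|R_\ep^{(r)}\|_{L^2}\le C_r\ep^4$ and uniqueness in one pass, without the separate bootstrap the paper invokes. The one thing you might spell out slightly more is how the quantitative $\ep^2$-rate on $\|\widetilde R_\ep\|_{H^r}$ follows from the IFT: the cleanest statement is via the uniform contraction argument (or the quantitative IFT), bounding $\|W_\ep-\sigma_\beta\|$ by a constant times the residual $\|F_\ep(\sigma_\beta)\|=\|(\mathcal T_0-\mathcal T_\ep)\sigma_\beta^2\|\le C\ep^2$, which is exactly what you gesture at.
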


On the other hand, when $\beta \in (0,1/3)$ a similar desingularization will not work. In Figure \ref{fig:disp}, note that when $\beta \in (0,1/3)$ and $c>1$ ({\it i.e.} is supercritical)
there is a unique $k_{\beta,c}>0$ at which 
\be\label{critical freq}
m_\beta(\pm k_{\beta,c}) - c = 0.
\ee
 Thus $\M_\beta - c$ cannot be inverted; the situation becomes  more complicated. What occurs is that when $\ep > 0$ the main pulse $\sigma_\beta$, through a sort of weak resonance, excites a very small amplitude periodic wave with frequency close to $k_{\beta,c}$. The end result is a generalized solitary wave as described above. See Figure \ref{fig:schematic} for a sketch of the solution.
 Our proof  is modeled on the one devised by Beale in \cite{Beal91} to study traveling waves in the full gravity-capillary problem (and which has subsequently
 been deployed to study generalized solitary waves in other contexts in  \cite{Faver} \cite{Faver-Wright} \cite{Hoffman-Wright} \cite{Amick-Toland}). 
The proof is found in Section \ref{the general}.
 Here is our result:
 \begin{theorem}\label{main theorem}
 There exist supercritical generalized solitary waves  for \eqref{nondimW} when the capillary effects are weak.
 Specifically, for all $\beta \in (0,1/3)$ and all $\ep$ sufficiently close to zero, there exist smooth, even functions
 $R_\ep$ and $P_\ep$ such that
 \be\label{form}
w_\ep(x) = \left({1- 3\beta\over 4}\right)\ep^2  \sech^2\left( \ep x \over 2 \right) + R_\ep(\ep x) + P_\ep(x) \mand c_\ep = 1 + \left( {1-3\beta \over 6}  \right) \ep^2
\ee
solve
\eqref{TWE1}.
The functions $R_\ep$ and $P_\ep$ have the following properties.
\begin{enumerate}[(i)]
\item $R_\ep(X)$ is an exponentially localized function of small amplitude. In particular, 
there is a constant $q_* > 0$ such that for all $r\ge0$ there exists $C_r > 0$ for which
\bes
\| \cosh^{q_*}(\cdot) R^{(r)}_\ep(\cdot) \|_{L^\infty(\R)} \le C_r \ep^4.
\ees
\item $P_\ep(x)$ is a periodic solution of \eqref{TWE1} whose frequency is approximately $k_{\beta,c_\ep}$ and 
whose amplitude is small beyond all algebraic orders. Specifically, there is a constant $\delta>0$ such that 
the frequency of $P_\ep$ lies in the interval $[k_{\beta,c_\ep} - \delta \ep,k_{\beta,c_\ep} +\delta \ep]$ and for all $r \ge 0$ there is a constant $C_{r}>0$ for which
\bes
\| P_\ep \|_{L^\infty(\R)} \le C_{r} \ep^r.
\ees
\end{enumerate}
Moreover, this solution is unique in the sense that no other pair $(R_\ep,P_\ep)$ leads to a solution of \eqref{TWE1} of the form \eqref{form}
which meets all the criteria stated in (i) and (ii).
\end{theorem}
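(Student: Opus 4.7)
The plan is to follow the decomposition strategy pioneered by Beale \cite{Beal91}. I seek a solution of the rescaled profile equation \eqref{TWE2} in the form
$$
W(X) = \sigma_\beta(X) + \Phi(X) + \tilde P(X),
$$
where $\Phi$ is a small, even, exponentially localized correction (related to the $R_\ep$ of the theorem by $R_\ep(X)=\ep^2\Phi(X)$) and $\tilde P(X) = \ep^{-2} P_\ep(X/\ep)$ is the rescaled periodic ripple with wavenumber near $k_{\beta,c_\ep}$. As a preliminary step, a Crandall--Rabinowitz bifurcation argument at the simple crossing \eqref{critical freq} produces a two-parameter family of small, even periodic solutions $P(\cdot;a,k)$ of $(\M_\beta - c_\ep)P + P^2 = 0$ with amplitude $a$ and frequency $k$ near $k_{\beta,c_\ep}$. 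Substituting the ansatz into \eqref{TWE2}, subtracting the periodic equation satisfied by $\tilde P$, and dividing by $\gamma_\beta\ep^2$ produces
$$
\L_\ep \Phi = \Rc_\ep(\Phi;a,k),
$$
where $\L_\ep \to L_0 := \partial_X^2 - 1 + 3\sech^2(X/2)$ as $\ep\to 0$ and $\Rc_\ep$ collects the KdV residual (which from \eqref{expando} is $O(\ep^2)$ in weighted norms), the quadratic self-interaction of $\Phi$, and the cross-interactions between $\sigma_\beta$ and $\tilde P$.

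The main obstacle is to establish $\ep$-uniform invertibility of $\L_\ep$ on even functions in an exponentially weighted space $H^s_{q_*} := \{f : \cosh^{q_*}(\cdot)f \in H^s(\R)\}$ for some small $q_* > 0$ independent of $\ep$. The limiting operator $L_0$ is of P\"oschl--Teller type whose single nontrivial $L^2$-kernel element $\sigma_\beta'$ is odd, and so is eliminated by the evenness restriction, giving invertibility of $L_0$ on even functions. The serious difficulty is that the multiplier $\gamma_\beta^{-1}\ep^{-2}(m_\beta(\ep K) - c_\ep)$ vanishes at the real points $K = \pm k_{\beta,c_\ep}/\ep$, so $\L_\ep$ is \emph{not} invertible on ordinary Sobolev spaces. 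The resolution exploits the fact that elements of $H^s_{q_*}$ have Fourier transforms analytic in the strip $|\mathrm{Im}(K)| < q_*$: one represents $\L_\ep^{-1}$ via an inverse-Fourier contour deformed off the real axis, passing below $\pm k_{\beta,c_\ep}/\ep$ and thereby avoiding the zeros of the symbol. Combining this contour-deformation estimate with a perturbative comparison to $L_0^{-1}$ on the central low frequencies yields $\|\L_\ep^{-1}\|_{H^s_{q_*}\to H^{s+2}_{q_*}} \le C$ uniformly in $\ep$, after quotienting by a one-dimensional obstruction.

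The obstruction is precisely the projection of the right-hand side onto the resonant mode $\cos(k_{\beta,c_\ep}X/\ep)$, and its vanishing is a \emph{solvability condition} that selects $a$ (and fine-tunes $k$) as a function of $\ep$. With $\L_\ep$ invertible modulo this condition, a Banach contraction in the even subspace of $H^s_{q_*}$ produces $\Phi = \Phi_\ep(a,k)$ with $\|\Phi\|_{H^s_{q_*}} = O(\ep^2)$, which gives the bound on $R_\ep$ claimed in (i). The solvability condition then reduces to an implicit equation for $a$ whose dominant forcing term is $\widehat{\sigma_\beta^2}(\pm k_{\beta,c_\ep}/\ep)$. Since $\sigma_\beta \propto \sech^2(X/2)$ extends analytically to the strip $|\mathrm{Im}(X)| < \pi$, this Fourier transform decays like $e^{-\pi|K|}$, so the forcing on the ripple is controlled by $e^{-\pi k_{\beta,c_\ep}/\ep}$ --- beyond all algebraic orders in $\ep$. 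Balancing this against the algebraic-in-$a$ linear response of the periodic branch yields $|a| \le C_r \ep^r$ for every $r \ge 0$, establishing (ii). Uniqueness within the ansatz class follows from the contraction estimate together with the local uniqueness in the Lyapunov--Schmidt reduction producing $P(\cdot;a,k)$.
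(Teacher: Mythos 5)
Your overall strategy is the same as the paper's: decompose the wave into the KdV pulse, a small member of the periodic bifurcation branch, and an exponentially localized correction; invert the singular linear operator on analytic weight spaces via contour deformation; and close a fixed point with the ripple amplitude as an extra degree of freedom absorbing the solvability condition at $K_\eps$. However, there are two genuine problems with the argument as written.

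First, your argument for the beyond-all-orders bound in (ii) is incomplete. You isolate the forcing term $\widehat{\sigma_\beta^2}(K_\eps)$ and observe that it is exponentially small by analyticity of $\sigma_\beta$. That term is indeed the only contribution to the solvability condition at the first iterate, but at the fixed point the condition reads
\[
a = \chi_\eps^{-1}\Bigl(\widehat{\J_0 + \J_1 + \J_2 + \J_3}(K_\eps) - 2\,\widehat{\sigma_\beta R}(K_\eps)\Bigr),
\]
and the terms involving the localized correction $R$ are \emph{not} exponentially small: $R$ lies in $H^r_{q_*}$ with $\|R\|_{r,q_*} = O(\eps^2)$, but it is not analytic in a strip, so its Fourier transform decays only polynomially, giving $|\widehat{\sigma_\beta R}(K_\eps)| \lesssim K_\eps^{-r}\|\sigma_\beta R\|_{r,q_*} \sim C_r\eps^{r+2}$ for each fixed $r$ — beyond any single algebraic order but not exponentially small, and in particular not controlled by $e^{-\pi K_\eps}$. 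The paper obtains the full statement $|a_\eps| \le C_r\eps^r$ for every $r$ not by exponential decay of the forcing, but by running the contraction at an arbitrary regularity index $r$, obtaining $|a_\eps| \le C_r\eps^{r+2}$ at that index, and then invoking uniqueness (and the inclusion $E^{r+1}_{q_*}\subset E^r_{q_*}$) to identify the solutions built at different $r$. Without that bootstrap-plus-uniqueness step your proposal only delivers a bound at the single regularity you chose to work at.

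Second, two smaller imprecisions. (a) For fixed speed $c_\eps$ the Crandall--Rabinowitz bifurcation from the trivial solution at $k=k_{\beta,c_\eps}$ gives a \emph{one}-parameter branch: the frequency is a function of the amplitude via the nonlinear dispersion relation (this is exactly the paper's map $a\mapsto (K_\eps^a,\phi_\eps^a)$). A genuinely two-parameter family $P(\cdot;a,k)$ does not exist, and if both $a$ and $k$ were free the single solvability condition would leave the problem underdetermined. (b) The uniform bound $\|\L_\eps^{-1}\|_{H^s_{q_*}\to H^{s+2}_{q_*}}\le C$ is false: the contour estimate near the poles $\pm K_\eps$ only yields a gain of $1/2$ derivative uniformly in $\eps$ (this is the paper's Lemma~\ref{suff}), and trying to extract two derivatives there produces a factor $\eps^{-1}$. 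The effective two-derivative smoothing is recovered only after subtracting the leading order operator, i.e.\ by writing $\L_\eps^{-1}\P_\eps = -\gamma_\beta^{-1}(1-\partial_X^2)^{-1} + O(\eps)$ in $B(E^r_q)$ as in the paper's Lemma~\ref{desing lemma}; that decomposition, not a direct uniform two-derivative gain, is what desingularizes the problem.
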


\begin{figure}
\includegraphics[scale=0.7]{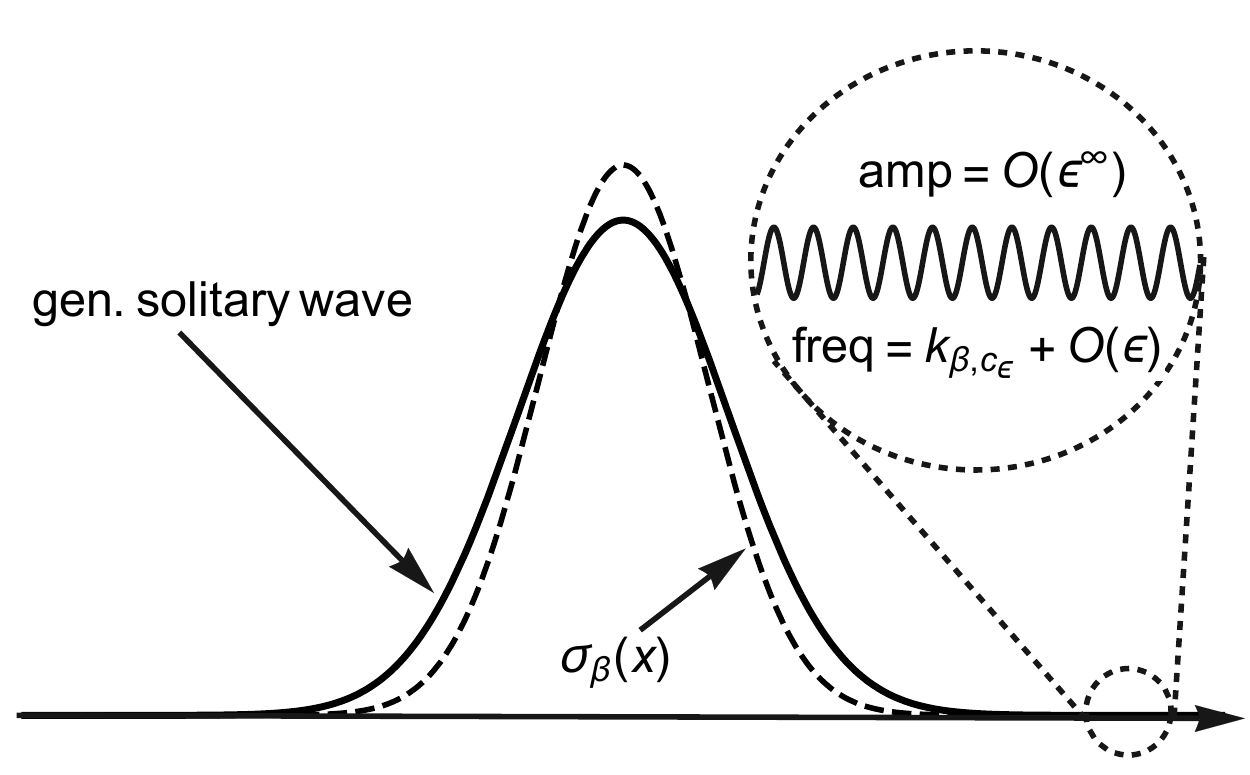}
\caption{This is a cartoon illustrating our main result Theorem \ref{main theorem}.}\label{fig:schematic}
\end{figure}

The waves $w_\eps$ constructed in Theorem \ref{main theorem} consist of a localized cure connecting two asymptotically small oscillatory end states.  In addition to the existence
result presented above, we also discuss how this rough ``decomposition" can be combined with recent work \cite{HJ15b} to provide insight into the possible stability of these waves.  While we
do not arrive at any definitive stability results here, we hope our study spurs additional work.

\begin{remark} 
We also point out the work \cite{A16}, where the author uses direct variational arguments
to prove the existence of localized solutions to a large class of pseudo-partial differential equations that contains \eqref{nondimW} for all values of $\beta>0$.  In the case of small surface tension,
however, the fact that his waves have phase speeds slightly less than the global minimia of the phase speed $m$ (hence, are necessiarly subcritical with respect to the long-wave phase
speed $m_\beta(0)$).  Labeling $k^*$ the (strictly positive) frequency where the global minima of 
$m$ is achieved, it follows that the localized waves constructed in \cite{A16} are in fact \emph{modulated solitary waves}, taking approximately the form $\phi(x)\cos(k^*x)$
for some localized function $\phi\in L^2(\RM)$. 
\end{remark}

\begin{center}
\underline{Acknowledgments}
\end{center}
The authors would like to thank Mats Ehrnstr\"om, Mark Groves, Miles Wheeler, Atanas Stefanov and Mathias Arnesen for useful conversations about this work.  The work of M.A.J. was partially
supported by the NSF under grant DMS-1614785.
The work of J.D.W. was partially
supported by the NSF under grant DMS-1511488.

\section{Conventions}

Here we specify the notation for the function spaces we will be using along with some other conventions.

\subsection{Periodic functions}
We let $W^{r,p}_\per:=W^{r,p}{(\mathbb{T}})$ be  the usual ``$r,p$" Sobolev
space of $2 \pi$-periodic functions. We denote $L^p_\per:=W^{0,p}_\per$ and
 $H^r_\per:=W^{r,2}_\per$.
Put
\be\label{per spaces}
E^r_\per:=H^r_{\per} \cap \left\{\text{even functions} \right\}.
\ee
By $C^r_\per$ we mean the space of $r-$times differentiable $2\pi$-periodic functions
and $C^\infty_\per$ is the space of smooth $2\pi$-periodic functions. 

\subsection{Functions on $\R$:}
We let $W^{r,p}:=W^{r,p}(\R)$ be the usual ``$r,p$" Sobolev
space of functions defined on  $\R$. For $q \in \R$ put
\be\label{Wspb spaces}
W^{r,p}_{q} := \left\{ u \in L^2(\R) : \cosh^q(x) u(x) \in W^{r,p}(\R) \right\} .
\ee
These are Banach spaces with the naturally defined norm. 
If we say a function is ``exponentially localized" we mean that it is in one of these spaces with $q > 0$.

 Put $L^p_q:=W^{0,p}_q$, 
$H^r:=W^{r,2}$ and $H^r_q:=W^{r,2}_q$ and 
denote $$\| f\|_{r,q}:=\| f\|_{H^r_q}:=\|\cosh(\cdot)^q f(\cdot)\|_{H^r(\R)}.$$ We let 
\be\label{sb spaces}
E^r_q:=H^r_{q} \cap \left\{\text{even functions} \right\}.
\ee

\subsection{Spaces of operators} For Banach spaces $X$ and $Y$ we let $B(X,Y)$ be the space
of bounded linear operators from $X$ to $Y$ equipped with the usual induced topology.

\subsection{Big $C$ notation:}
Suppose that 
$Q_1$ and $Q_2$ are positive quantities (like norms) which depend upon 
the smallness parameter $\ep$, the regularity index $r$, the decay rate $q$ and some collection of elements $\eta$ which live in a Banach space $X$.

When we write ``$Q_1 \le C Q_2$" we mean ``there exists $C>0$,
$\ep_0 > 0$, $q_0>0$, $\delta >0$ such that $Q_1 \le C Q_2$ for all $r \ge 0$, $q \in [0,q_0]$, $\ep \in (0,\ep_0]$ and $\|\eta\|_{X} \le \delta_0$."
In particular, the constant does not depend on anything.

When we write ``$Q \le C_r f(\ep)$" we mean 
``there exists $\ep_0 > 0$, $q_0 > 0$, $\delta_0>0$ such that for any $r \ge 0$ there exist $C_r>0$ such that 
$Q_1 \le C_r Q_2$ when $q \in [0,q_0]$, $\ep \in (0,\ep_0]$ and $\|\eta\|_{X} \le \delta_0$."
In this case, the constant depends on $r$ but nothing else.

When we write ``$Q \le C_q f(\ep)$" we mean 
``there exists $\ep_0 > 0$, $q_0 > 0$, $\delta_0>0$ such that for any $q \in (0,q_0]$ there exists $C_q>0$ such that 
$Q_1 \le C_q Q_2$ when $r \ge 0$, $\ep \in (0,\ep_0]$ and $\|\eta\|_{X} \le \delta_0$."
The constant depends on $q$ but nothing else.

Lastly, when we write ``$Q_1 \le C_{r,q} Q_2$" we mean 
``there exists $\ep_0 > 0$, $q_0 > 0$, $\delta_0>0$ such that for any $r \ge 0$ and $q \in (0,q_0]$ there exist $C_{r,q}>0$ such that 
$Q_1 \le C_{r,q} Q_2$ when $\ep \in (0,\ep_0]$ and $\|\eta\|_{X} \le \delta_0$."
That is, the constant depends on $r$ and $q$.


\subsection{Fourier analysis:}
We use following normalizations and notations for the Fourier transform and its inverse:
$$
\hat{f}(k):=\F[f](k):={1 \over 2 \pi} \int_\R f(x)e^{-ikx} dx \mand
\check{g}(x):=\F^{-1}[g](x) := \int_\R g(k)e^{ikx} dk.
$$

\section{Solitary waves of depression when $\beta > 1/3$.}\label{depression}

The following theorem on the existence of solitary waves in a certain class of pseudodifferential equations was proved in \cite{SW}: 
\begin{theorem} \label{ASthm} 
Suppose that  there exists $\delta_*>0$ such that 
$n: (-\delta_*,\delta_*) \to \R$ is $C^{2,1}$ (that is, its second derivative exists and is uniformly Lipschitz continuous) and satisfies
$$
n(0) = n'(0) = 0 \mand n''(0) > 0.
$$
Moreover
suppose that 
$l:\R \to \R$ is  even and there exists $\bar{k}>0$ 
which has the following properties:
\begin{enumerate}[(a)]
\item $l(k)$ is $C^{3,1}$ (that is, its third derivative exists and is uniformly Lipschitz continuous) for $k \in [-\bar{k},\bar{k}]$.
\item
$
l_2:=\max_{|k|\le \bar{k}} l''(k) < 0.
$
\item
$
l_1:=\sup_{k \ge \bar{k}} l(k)  < l(0).
$
\end{enumerate}

Let $\mathcal{L}$ be the Fourier multiplier operator with symbol $l(k)$.
Then there exists $\ep_0>0$, so that for every $\ep\in (0, \ep_0)$, there is a solution $(\phi,v) \in E^1_0 \times \R$ 
of 
\be\label{AS eqn}
\left(\mathcal{L} -v\right) \phi + n(\phi) = 0
\ee
of the form
$$
\phi(y)= \ep^2 \Phi_\ep(\ep y) \mand v=  l(0) - {1 \over 2} l''(0) \ep^2
$$
where
$$
\Phi_\ep(X) =  -\frac{3 l''(0)}{2 n''(0)}  \sech^2\left(\frac{X}{2}\right)+ \rho_\ep(X).
$$
The function $\rho_\ep(X) \in E^1_0$ satisfies the estimate 
$
\left\|\rho_\ep\right\|_{1,0} \le C\ep^2.
$
\end{theorem}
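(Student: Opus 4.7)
The plan is to apply the desingularization strategy of Friesecke-Pego \cite{FP99}: exploit the fact that hypotheses (a)--(c) make the linear operator $\L - v$ boundedly invertible for the specified $v$, recast the rescaled profile equation as a regular (rather than singular) perturbation of the classical KdV soliton equation, and apply the implicit function theorem in $E^1_0$.

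First I would verify invertibility of $\L - v$. Since $l''(0) < 0$, one has $v = l(0) - \tfrac{1}{2} l''(0) \ep^2 > l(0)$; hypothesis (b) with evenness of $l$ gives $l(k) \le l(0)$ on $[-\bar k, \bar k]$, so $l(k) - v \le \tfrac{1}{2} l''(0) \ep^2 < 0$ there, while hypothesis (c) gives $l(k) - v \le l_1 - l(0) < 0$ for $|k| > \bar k$. Thus the symbol of $\L - v$ is uniformly bounded away from $0$ for small $\ep$. Applying the scaling $\phi(y) = \ep^2 \Phi(\ep y)$ rewrites \eqref{AS eqn} as $\ep^2 (\L^\ep - v) \Phi + n(\ep^2 \Phi) = 0$ with $\L^\ep$ the Fourier multiplier by $l(\ep K)$. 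Writing $\L^\ep - v = \ep^2 \A_\ep$ (so $\A_\ep$ has symbol $[l(\ep K) - l(0)]/\ep^2 + \tfrac{1}{2} l''(0)$) and using $n(u) = \tfrac{1}{2} n''(0) u^2 + R(u)$ with $|R(u)| \lesssim |u|^3$ (from the $C^{2,1}$ hypothesis on $n$), division by $\ep^4$ yields the regular fixed-point problem
\begin{equation*}
\Phi + \A_\ep^{-1}\left[\tfrac{1}{2} n''(0) \Phi^2 + \ep^{-4} R(\ep^2 \Phi)\right] = 0.
\end{equation*}
The symbol of $\A_\ep$ converges as $\ep \to 0$ to $\tfrac{1}{2} l''(0)(K^2 + 1)$ and is uniformly bounded away from $0$, so $\A_\ep^{-1}$ is uniformly bounded on $H^r(\R)$. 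The limit equation at $\ep = 0$ is the KdV soliton ODE $\Phi'' - \Phi = (n''(0)/l''(0))\Phi^2$, with unique nontrivial even $L^2$ solution $\sigma(X) = -\tfrac{3 l''(0)}{2 n''(0)} \sech^2(X/2)$.

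Setting $\Phi = \sigma + \rho$ in $E^1_0$, I would apply the implicit function theorem at $(\sigma, 0)$. The Fr\'echet derivative in $\rho$ is $\rho \mapsto \rho + n''(0) \A_0^{-1}(\sigma \rho)$, which (after composition with $\A_0$) equals the KdV linearization $\mathcal{J}\rho := \tfrac{1}{2} l''(0)(-\partial_X^2 + 1)\rho + n''(0) \sigma \rho$. Its kernel on $H^1(\R)$ is $\spn(\sigma')$ by translation invariance of the KdV ODE; since $\sigma'$ is odd and multiplication by $\sigma$ is a relatively compact perturbation of the invertible operator $\tfrac{1}{2} l''(0)(-\partial_X^2 + 1)$, $\mathcal{J}$ is Fredholm of index zero and an isomorphism on $E^1_0$ by the Fredholm alternative. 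The main obstacle will be quantifying the residual at $\rho = 0$ and the continuity of $\ep \mapsto \A_\ep^{-1}$ in operator norm. The residual has two contributions: the cubic remainder $\ep^{-4} R(\ep^2 \sigma)$ is $O(\ep^2)$ in $H^1_0$ by the Lipschitz bound on $n''$ and the exponential decay of $\sigma$, and $(\A_\ep - \A_0)\sigma$ is $O(\ep^2)$ in $H^1$ via Taylor's theorem on $l$ in the regime $|\ep K| \le \bar k$ (with exponentially small contributions from $|\ep K| > \bar k$ owing to rapid decay of $\hat \sigma$). The IFT then yields $\rho_\ep \in E^1_0$ with $\|\rho_\ep\|_{1,0} \le C \ep^2$.
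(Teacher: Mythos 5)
The paper does not prove Theorem \ref{ASthm}: it states it verbatim and cites Stefanov \& Wright \cite{SW} for the proof, noting only that the strategy there ``was inspired by \cite{FP99,FML15}.'' There is therefore no in-paper proof to compare against. That said, your proposal is a faithful reconstruction of exactly the Friesecke--Pego-type desingularization the paper attributes to \cite{SW}: invert $\L - v$ (possible because $v > \sup_k l(k)$ under (a)--(c)), pull out the factor $\ep^2$, divide the rescaled equation by $\ep^4$, observe that $\A_\ep$ converges to $\tfrac12 l''(0)(1-\partial_X^2)$, and apply the implicit function theorem in the even subspace $E^1_0$ where the translational kernel $\spn\{\sigma'\}$ of the KdV linearization is excluded. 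The only place you might want to be more explicit, were you to write this out, is the uniform operator-norm convergence $\A_\ep^{-1}\to\A_0^{-1}$ on $H^1$: the symbol of $\A_\ep^{-1}$ is not close to that of $\A_0^{-1}$ pointwise uniformly in $K$ (both are merely $O(\ep^2)$ for $|\ep K|\sim\bar k$), so one needs to split frequency space into a low-frequency zone $|\ep K|\ll 1$ where Taylor's theorem and the Lipschitz bound on $l''$ give smallness, and a high-frequency zone where both symbols are themselves $O(\ep)$. You flagged exactly this as the main obstacle, so the plan is sound; the estimates just need the frequency splitting made precise.
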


This theorem can be applied directly to \eqref{TWE1} when $\beta >1/3$. Specifically, let $v = -c$, $l = -m$, $\mathcal{L} = -\M$, $\phi = - u$ and $n(\phi) = \phi^2$. Then \eqref{TWE1} is transformed into \eqref{AS eqn}. Clearly $n(\phi) = \phi^2$ meets the required hypotheses in Theorem \ref{ASthm}. Given the graph of $m_\beta(k)$ in Figure \ref{fig:disp}, it is easily believed---and even true---that $l(k) = -m_\beta(k)$ meets all conditions (a)-(c) when $\beta > 1/3$. Thus we get the conclusions of the theorem. Unwinding the very simple rescalings gives us most of Corollary \ref{depression cor}.
%
%
Note that Theorem \ref{ASthm} only tells us that the solutions are in $E^1_0$
whereas Corollary \ref{depression cor} tells us they are smooth. In this problem, however, we have the additional information that $m_\beta(k)$ grows like $|k|^{1/2}$ for large $|k|$ and hence $\M_\beta$ is ``like" $\partial_x^{1/2}$. With this, a straightforward bootstrapping argument demonstrates that the solutions are smooth. We omit these details.

\section{Generalized solitary waves when $\beta \in (0,1/3)$.}\label{the general}

In this section we prove Theorem \ref{main theorem}. Throughout we fix $\beta \in (0,1/3)$ and we will, for the most part, not track how quantities depend on this quantity.\footnote{Our results hold for any such choice of $\beta$ but we make no claims upon how they depend on $\beta$ and in particular we make no claims about what happens at $\beta  \to 0^+$ or $\beta  \to 1/3^{-1}$.}

\subsection{A necessary solvability condition}\label{S:linearsolve1}

We begin our proof of Theorem \ref{main theorem} by doing something that is doomed to fail.  Nevertheless, we believe that understanding
the mechanism behind this failure is an important step in the journey to the proof of Theorem \ref{main theorem}.  Throughout, $r\geq 1$ (a regularity index) is fixed  but arbitrary and
$q>0$ (a decay rate) is taken to be sufficiently small.

To this end, we  first attempt to construct solutions of the nonlocal profile equation \eqref{TWE2} for $0<\eps\ll 1$ of the form
\be\label{ansatz}
W_\eps(X) = \sigma_\beta(X) + R(X),
\ee
where $R=R(\cdot;\eps)$ is to be some small, smooth, even\footnote{The equation respects even symmetry and as such we are
free to act, now and henceforth, that all functions are even.} function in $E^r_q$.
Inserting this ansatz into \eqref{TWE2} leads to the following equation for $R$:
\be\label{R eqn}
{\L_\ep} R + 2 \sigma_\beta R = \J_0 +\J_1
\ee
Here
\[
\L_\ep :=\ep^{-2}(\M_\beta^\ep-1 -\gamma_\beta \ep^2),\quad
\J_0=- \sigma_\beta^2 - \ep^{-2} (\M_\beta^\ep - 1 - \gamma_\beta\ep^2) \sigma_\beta \mand \J_1:=-R^2.
\]
Note that $\eps^2\mathcal{L}_\eps$ is simply the linearization of \eqref{TWE2} about the trivial solution $u=0$.
$\J_1$ is obviously quadratic in the unknown $R$, thus we obtain the following estimates 
via Sobolev embedding when $r \ge 1$:
\be\label{J1 estimates}
\| \J_1\|_{r,q} \le C_r  \|R\|_{r,q}^2 \mand
\| \J_1-\tilde{\J}_1\|_{r,q} \le C_r \left( \|R\|_{r,q} + \| \tilde{R}\|_{r,q} \right)\| R - \tilde{R}\|_{r,q}.
\ee
Above, by $\tilde{\J}_1$ we simply mean the quantity $\J_1$ evaluated at a function $\tilde{R}$ instead of at $R$.  

As for $\J_0$, it is a small forcing term.
From its definition 
 and that fact that $\sigma_\beta(X)$ solves \eqref{KDVTWE}
we see that
\be\label{J0 revisited}\begin{split}
\J_0 = &\gamma_\beta(\sigma_\beta'' -\sigma_\beta) - \ep^{-2} \left(\M_\beta^\ep -1 - \gamma_\beta \ep^2 \right)\sigma_\beta \\
        =& -\ep^{-2} ( \M_\beta^\ep-1-\gamma_\beta\ep^2 \partial_X^2)\sigma_\beta.
\end{split}
\ee
%
%
Then the formal expansion of $\M_\beta^\ep$ in \eqref{expando2} indicates that
$\J_0 \sim \ep^2 \partial_X^4 \sigma_\beta$. This argument can be made rigorous by way of Fourier analysis:
\begin{lemma}\label{J0 lemma}
There exists $q_0>0$ so that for any $q \in [0,q_0]$ and $r \ge 0$ we have 
\be\label{J0 estimate}
\| \J_0 \|_{r,q} \le C_r \ep^2.
\ee
\end{lemma}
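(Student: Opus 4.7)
\medskip

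\noindent\textbf{Proof proposal.} I will proceed entirely on the Fourier side. From the second line of \eqref{J0 revisited} and the identity $\partial_X^2 \leftrightarrow -K^2$, the Fourier transform of $\J_0$ factors as
\be\label{plan:factor}
\hat{\J}_0(K) = -h_\ep(K)\,\hat{\sigma}_\beta(K), \qquad h_\ep(K) := \ep^{-2}\bigl(m_\beta(\ep K) - 1 + \gamma_\beta \ep^2 K^2\bigr).
\ee
Two structural facts drive the proof. First, $m_\beta(k) = \sqrt{(1+\beta k^2)\tanh(k)/k}$ is real-analytic in the strip $|\Im k| < \pi/2$ (for $\beta \in (0,1/3)$ the zeros $\pm i/\sqrt{\beta}$ of $1+\beta k^2$ sit outside this strip), and by the expansion \eqref{expando} the symbol $h_\ep$ satisfies $h_\ep(K) = \O(\ep^2 K^4)$ for $|\ep K| \le 1$. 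Since $m_\beta(k) = \O(|k|^{1/2})$ as $|k|\to\infty$, for $|\ep K| \ge 1$ one has $|h_\ep(K)| \le C(\ep^{-2} + K^2) \le C K^2$. Second, $\sigma_\beta \propto \sech^2(X/2)$ extends analytically to $|\Im X| < \pi$, so $\hat{\sigma}_\beta(K)$ decays exponentially and in fact extends analytically in some strip $|\Im K| \le q_*$ with uniform bounds (it is an explicit multiple of $K/\sinh(\pi K)$).

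To handle the unweighted case $q=0$, Plancherel and the split $|\ep K| \lessgtr 1$ give
\bes
\|\J_0\|_{H^r}^2 = \int_\R (1+K^2)^r |h_\ep(K)|^2 |\hat{\sigma}_\beta(K)|^2\, dK \le C\ep^4 \int_{|\ep K|\le 1}\!\!(1+K^2)^{r+4} |\hat\sigma_\beta|^2\, dK \;+\; C\!\!\int_{|K|>1/\ep}\!\!(1+K^2)^{r+2} |\hat\sigma_\beta|^2\, dK.
\ees
The first integral is bounded by a finite constant $C_r$ since $\hat\sigma_\beta$ decays exponentially; the second is $\O(e^{-c/\ep})$ for the same reason. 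This yields $\|\J_0\|_{H^r} \le C_r \ep^2$, matching \eqref{J0 estimate} with $q=0$.

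For the weighted case $q\in (0,q_0]$ with $q_0 < q_*$ chosen small, the plan is to shift contours. Using $\cosh^q(x) \le C(e^{qx} + e^{-qx})$ and the Fourier identity $\F[e^{\pm qx}f](K) = \hat{f}(K \mp iq)$, it suffices to bound $\int (1+K^2)^r |\hat\J_0(K \pm iq)|^2\,dK$. Both factors in \eqref{plan:factor} are analytic on $|\Im K| \le q_0$: for $\ep$ small, $m_\beta(\ep z)$ is analytic whenever $|\Im z| \le q_0 \ll \pi/(2\ep)$, and on these lines the same Taylor/asymptotic bounds on $h_\ep$ hold with $K$ replaced by $K \pm iq$; meanwhile $\hat\sigma_\beta(K \pm iq)$ still decays exponentially in $|K|$ uniformly for $q \in [0,q_0]$. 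Re-running the low-/high-frequency split on the shifted lines yields the same bound $C_r \ep^2$ uniformly in $q \in [0,q_0]$, in agreement with the paper's convention for $C_r$.

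The only mild obstacle is the bookkeeping needed to ensure that the constants obtained via Taylor's theorem on complex lines, together with the exponential decay bounds on $\hat\sigma_\beta$, are uniform in both $\ep \in (0,\ep_0]$ and $q\in [0,q_0]$. This is taken care of by choosing $q_0$ strictly less than both $\pi/2$ (the analyticity radius of $m_\beta$, which dominates $q_0\ep$) and the first imaginary singularity of $\hat\sigma_\beta$, and by applying Cauchy's integral formula on a fixed horizontal segment to transfer the real-axis Taylor bound on $m_\beta(k) - 1 + \gamma_\beta k^2 = \O(k^4)$ to complex $k$ with $|\Im k|$ small.
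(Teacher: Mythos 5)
Your proposal is correct and follows essentially the same route as the paper: both work on the Fourier side, exploit the Taylor-remainder bound $|m_\beta(\ep Z)-1+\gamma_\beta\ep^2Z^2|\le c_*\ep^4|Z|^4$ on a complex strip, and transfer the weighted estimate via Paley--Wiener. The only cosmetic difference is that you split frequencies at $|\ep K|=1$ and lean on the exponential decay of $\hat\sigma_\beta$ to kill the tail, whereas the paper notes the symbol bound $|h_\ep(Z)|\le c_*\ep^2|Z|^4$ actually holds uniformly on the whole strip (the $|Z|^2$ growth at high frequency is dominated by $\ep^2|Z|^4$ there) and invokes its general multiplier Lemma \ref{trunc lemma} to convert the factor $|Z|^4$ into four derivatives on $\sigma_\beta$, avoiding the split.
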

The proof is in the Appendix.

%

%

%

To attempt to solve the nonlinear problem \eqref{R eqn}, we first consider the solvability of the nonhomogeneous linear equation\footnote{That is,
for each $F\in E^r_q$ we try to show there exists a unique solution $R\in E^{r+1/2}_q$ of \eqref{linsolve1}, and that this solution depends continuously
on $F$.}
\begin{equation}\label{linsolve1}
\mathcal{L}_\eps R+2\sigma_\beta R=F
\end{equation}
for $F\in E^r_q$.  Assuming the continuous solvability of \eqref{linsolve1} on $E^r_q$, we could then attempt to solve the nonlinear equation \eqref{R eqn}
through iteration.  

Recall from \eqref{critical freq} that for $\beta \in (0,1/3)$ and $c>1$ there exists unique $k_{\beta,c}>0$ such that $m_\beta(\pm k_{\beta,c}) = c$.
This implies that the symbol 
\be\label{lep}
l_\ep(K) : = \ep^{-2} \left(m_\beta(\ep K) -1 -\gamma_\beta \ep^2\right)
\ee
associated to the linear operator $\mathcal{L}_\eps$ satisfies
\be\label{kep prop}
l_\ep(\pm K_\ep) = 0
\ee
where
\be\label{this is kep}
K_\ep := {k_{\beta,1+\gamma_\beta \ep^2} \over\ep}.
\ee
From the above considerations it is easy to conclude that $K_\eps=\mathcal{O}(1/\eps)$\footnote{Which is to say that there are constants $0<k_1<k_2$
 such that
$k_1 \ep^{-1} \le K_\ep \le k_2 \ep^{-1}$
for $\ep$ close enough to zero. 
}
and is the unique frequency for which \eqref{kep prop} occurs.
%

Taking the Fourier transform of \eqref{linsolve1} and evaluating at $K_\eps$ implies that \eqref{linsolve1} is only solvable provided
that $R$ and the forcing $F$ satisfy $2\widehat{\sigma_\beta R}(K_\eps)=\widehat{F}(K_\eps)$.  For a generic $F\in E^r_q$, it follows that the single unknown $R$ is required to solve
two equations, hence the linear  problem \eqref{linsolve1} is  overdetermined.  Consequently, the above method of constructing a localized solution
of \eqref{TWE2} of the form \eqref{ansatz} fails.  

\subsection{Beale's method}\label{s:beal}

Beale encountered nearly the same obstacle encountered in Section \ref{S:linearsolve1} in his work on the full gravity-capillary water wave problem \cite{Beal91}.  
In his investigation, he made the remarkable observation that
just as the special frequency $K_\eps$ causes difficulties at the linear level, it also points to a  way out.  
Indeed, observe that the lack of solvability of the linear forced
equation \eqref{linsolve1} stems from \eqref{kep prop} which, when written on the spatial side, simply states
that the linear problem $\L_\ep P = 0$ has a solution of the form
$
P = \cos(K_\ep X).
$
Beale used this observation to motivate a refinement of the ansatz \eqref{ansatz} that incorporates
a family of \emph{small amplitude, nonlinear} periodic traveling waves associated to the governing profile equation which
are roughly given by $a \cos(K_\ep X)$ where $|a| \ll 1$.
By using the amplitude of this oscillation as an additional free variable, he was able to overcome the above difficulties.

In order to adapt Beale's method to the present case, we begin by recalling that, for each fixed $0<\eps\ll 1$, the nonlinear profile equation \eqref{TWE2} admits a family of small amplitude,
spatially periodic solutions with frequencies close to $K_\eps$.  Indeed, the following result follows from the analysis of \cite{HJ15b}:
%

\begin{theorem}\label{MV}Fix $\beta \in (0,1/3)$.
There exists $\ep_0 > 0$, $\alpha_0>0$ and a mapping
\be\label{maps}
\begin{split}
[-\alpha_0,\alpha_0] \times (0,\ep_0]  &\longrightarrow \R \times C_\per^\infty  \\
       (a,\ep) &\longmapsto  (K_\ep^a,\phi_\ep^a) \\
\end{split}
\ee
with the following properties:
\begin{itemize}
\item For each $\eps\in(0,\eps_0]$, the function $W(X)= a \phi_\ep^a(K_\ep^a X)$ solves \eqref{TWE2} for all $|a| \le \alpha_0$.
\item $\phi_\ep^0(X) = \cos(X)$ and $K_\ep^0 = K_\ep$.
\item There exists $C>0$ such that $0 < \ep \le \ep_0$ and $|a|,|{\tilde{a}}| \le \alpha_0$ imply
\be\label{Klip}
|K_\ep^a - K_\ep^{{\tilde{a}}}| \le C|a - {\tilde{a}}|.
\ee
\item For all $r \ge 0$ there exists $C_r>0$ such that $0 < \ep \le \ep_0$ and $|a|,|{\tilde{a}}| \le \alpha_0$  imply
\be\label{philip}
\| \phi^a_\ep-\phi^{{\tilde{a}}}_\ep\|_{H^r_\per} \le C_r |a-{\tilde{a}}|.
\ee
\end{itemize}
\end{theorem}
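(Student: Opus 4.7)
The plan is to prove Theorem \ref{MV} by a Lyapunov--Schmidt reduction of the rescaled profile equation, viewed as a bifurcation from the simple kernel mode of $\tilde{\mathcal L}_{\eps,K_\eps}$ at $a=0$. Substituting the ansatz $W(X) = a\,\phi(KX)$ with $\phi$ a $2\pi$-periodic even function into \eqref{TWE2} and dividing through by $a\eps^2$ yields, in the variable $Y := KX$,
\[
\tilde{\mathcal L}_{\eps,K}\phi + a\,\phi^2 = 0,
\]
where $\tilde{\mathcal L}_{\eps,K}$ is the Fourier multiplier on $E^r_\per$ with symbol $n \mapsto l_\eps(nK)$. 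Because $\tilde{\mathcal L}_{\eps,K}\cos = l_\eps(K)\cos$ and $l_\eps(K_\eps) = 0$, the triple $(\phi,K,a) = (\cos,K_\eps,0)$ is a trivial solution, from which I would bifurcate.

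Writing $\phi = \cos + \psi$ with $\psi$ in the $L^2_\per$-orthogonal complement of $\cos$ within $E^r_\per$, and letting $\mathbb P_\perp$ denote the corresponding projection, the problem decouples into the scalar bifurcation equation
\[
F(K,a,\eps) := l_\eps(K) + a\,\mu\bigl((\cos+\psi)^2\bigr) = 0,
\]
where $\mu(f)$ denotes the coefficient of $\cos$ in $f$, together with a range equation
\[
\tilde{\mathcal L}_{\eps,K}\psi + a\,\mathbb P_\perp(\cos + \psi)^2 = 0
\]
in $\mathbb P_\perp E^r_\per$. The key linear step is to invert $\tilde{\mathcal L}_{\eps,K}$ on $\mathbb P_\perp E^r_\per$ with operator norm uniformly bounded in $\eps \in (0,\eps_0]$ and $K$ in a small neighborhood of $K_\eps$. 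This rests on the non-resonance estimate $|l_\eps(nK)| \ge c_0 > 0$ for all $|n| \ne 1$: by \eqref{this is kep}, $k_{\beta,c_\eps} \to k_{\beta,1} > 0$ as $\eps \to 0^+$, and since $m_\beta$ is strictly monotone past its unique positive minimum for $\beta \in (0,1/3)$ (cf.\ Figure \ref{fig:disp}(a)), the values $m_\beta(nk_{\beta,c_\eps})$ for $|n|\ge 2$ remain uniformly bounded away from $c_\eps$, while $l_\eps(0) = -\gamma_\beta \ne 0$. Since the symbol grows like $|n|^{1/2}\eps^{-3/2}$ at high frequency, the inverse is in fact a uniformly bounded smoothing operator.

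Given this, for each fixed $r \ge 1$ the implicit function theorem applied to the range equation produces $\psi = \psi(K,a,\eps) \in \mathbb P_\perp E^r_\per$, smooth in $(K,a)$ near $(K_\eps,0)$, with $\psi(K_\eps,0,\eps) = 0$ and Lipschitz bounds in $(K,a)$ uniform in $\eps$ (with an $r$-dependent constant). Uniqueness at distinct values of $r$ forces these solutions to agree, placing $\psi \in C^\infty_\per$. Substituting back into $F$ gives a scalar equation in $K$; at $(K_\eps,0)$ one has $F=0$ and
\[
\partial_K F\big|_{(K_\eps,0)} = l_\eps'(K_\eps) = \eps^{-1} m_\beta'\bigl(k_{\beta,c_\eps}\bigr) \ne 0,
\]
the derivative $m_\beta'(k_{\beta,c_\eps})$ being bounded away from zero uniformly in small $\eps$ because $k_{\beta,c_\eps}$ stays separated from the unique critical point of $m_\beta$ on $(0,\infty)$. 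A second application of the implicit function theorem yields $K = K_\eps^a$ as a Lipschitz function of $a$ with $K_\eps^0 = K_\eps$, producing \eqref{Klip}. Setting $\phi_\eps^a := \cos + \psi(K_\eps^a,a,\eps)$ completes the construction, with \eqref{philip} inherited from the Lipschitz bounds on $\psi$ and $K_\eps^a$.

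The principal obstacle, relative to a textbook bifurcation-from-a-simple-eigenvalue argument, is ensuring that $\alpha_0$ and the Lipschitz constants $C$, $C_r$ in \eqref{Klip}--\eqref{philip} are \emph{uniform in $\eps$} as $\eps \to 0^+$. Both trace back to the uniform-in-$\eps$ non-resonance estimate $|l_\eps(nK)| \ge c_0 > 0$ for $|n| \ne 1$, which itself reflects the qualitative shape of the dispersion curve $m_\beta$ for $\beta \in (0,1/3)$ depicted in Figure \ref{fig:disp}(a). The whole picture is essentially contained in the analysis of \cite{HJ15b}; the sketch above supplies the specific parameterization $(a,\eps) \mapsto (K_\eps^a, \phi_\eps^a)$ needed for the generalized solitary wave construction to come.
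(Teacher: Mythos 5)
The paper does not supply its own proof of Theorem~\ref{MV}: it states that the result ``follows from the analysis of \cite{HJ15b}'' and leaves it there. Your Lyapunov--Schmidt reconstruction is correct and is the standard route (also the one taken in \cite{HJ15b}): rescale, reduce over the simple kernel $\spn\{\cos\}$ of $\tilde{\mathcal L}_{\eps,K_\eps}$ on even $2\pi$-periodic functions, invert on the complement using the non-resonance bound $|l_\ep(nK)|\ge c_0$ for $|n|\ne 1$ near $K\approx K_\ep$, and solve the scalar bifurcation equation for $K$ using $\partial_K F|_{(K_\ep,0)}=l_\ep'(K_\ep)=\eps^{-1}m_\beta'(k_{\beta,c_\ep})\neq 0$. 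The places you flag as the crux — uniformity of $\alpha_0$ and of the Lipschitz constants as $\ep\to 0^+$ — are exactly what needs to be tracked when translating the fixed-$c$ result of \cite{HJ15b} into the $\ep$-parametrized statement of the theorem, and your accounting of them (non-resonance uniform in $\ep$ because $k_{\beta,c_\ep}\to k_{\beta,1}>0$ stays on the strictly increasing branch of $m_\beta$; $l_\ep(0)=-\gamma_\beta$; the large denominator $\partial_K F\sim\ep^{-1}$ making $dK/da$ small) is sound. Two minor points worth noting, neither affecting correctness: the high-frequency growth of the symbol in the variable $n$ is of order $\eps^{-2}|n|^{1/2}$ rather than $\eps^{-3/2}|n|^{1/2}$ (harmless — the inverse is still uniformly bounded and smoothing), and since $\mu(\cos^2)=0$ one actually has $\partial_a F|_{(K_\ep,0)}=0$, which is consistent with the Stokes-type expansion $K_\ep^a=K_\ep+\mathcal O(a^2)$ and only strengthens \eqref{Klip}.
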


Following Beale, we refine the ansatz \eqref{ansatz} by including one of the above small amplitude waves.  Specifically,
introducing the notation
\be\label{this is PHI}
{\Phi_\ep^a(X)}:= \phi_\ep^a(K_\ep^a X)
\ee
we attempt to construct solutions of the profile equation \eqref{TWE2} for $0<\eps\ll 1$ of the form
\be\label{bansatz}
W_\eps(X) = \sigma_\beta(X) + a {\Phi_\ep^a(X)} + R(X)
\end{equation}
where now both $R \in E^r_q$ and $a \in \R$ are unknowns.
Inserting the refined ansatz \eqref{bansatz} into \eqref{TWE2}  gives the equation
\bes
\L_\ep  R + 2  \sigma_\beta R + 2 a \sigma_\beta \Phi_\ep^a+2a \Phi^a_\eps R = \J_0 +  \J_1
\ees
where here $\mathcal{L}_\ep$, $\J_0$ and $\J_1$ are as before.  Note that the term $a\Phi^a_\eps R$ is clearly nonlinear 
in the unknowns $a$ and $R$.  The term $a\sigma_\beta\Phi^a_\eps$ however has an $\mathcal{O}(a)$ term coming from the fact
that $\Phi^0_\eps(X)=\cos(K_\eps X)$.  Incorporating this additional linear term on the left hand side of leads to the nonlinear equation
\be\label{our shark}
\L_\ep  R + 2  \sigma_\beta R + 2 a \sigma_\beta \Phi_\ep^0= \J_0 +  \J_1+\J_2+\J_3
\ee
where here we have
\[
\J_2:=-2 a \sigma_\beta \left( \Phi_\ep^a -\Phi_\ep^0\right) \mand \J_3  = -2 a  \Phi^a_\ep R.
\]

Given \eqref{Klip} and \eqref{philip}  we see that $\J_2$ is nonlinear in the sense that it is morally $\O(a^2)$.
This leads to: 
\begin{lemma}\label{J2 lemma} There exists $\ep_0>0$ and $q_0>0$ such that, for all $r \ge 0$, $q \in [0,q_0]$ and $\ep \in (0,\ep_0]$ 
we have
\be\label{J2 estimates}
\| \J_2\|_{r,q} \le C_r \ep^{-r} a^2
\mand 
\| \J_2-\tilde{\J}_2\|_{r,q} \le C_r \ep^{-r} (|a|+|\tilde{a}|)|a-\tilde{a}|.
\ee
\end{lemma}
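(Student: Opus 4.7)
The plan is to estimate $\J_2 = -2a\sigma_\beta(\Phi_\eps^a - \Phi_\eps^0)$ directly from the Lipschitz estimates \eqref{Klip} and \eqref{philip} of Theorem \ref{MV}, using the exponential localization of $\sigma_\beta$ to absorb any polynomial growth in $|X|$ and tracking how each $X$-derivative trades one power of $|a|$ for a factor of $\eps^{-1}$ coming from the fact that $K_\eps^a = \mathcal{O}(\eps^{-1})$.

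First I would expand $\|\J_2\|_{r,q} = \|\cosh^q \J_2\|_{H^r}$ via the Leibniz rule into a sum of terms of the form $\|\cosh^q \sigma_\beta^{(j)}\,\partial_X^{r-j}(\Phi_\eps^a - \Phi_\eps^0)\|_{L^2}$ for $0 \le j \le r$. Since $\sigma_\beta(X) \propto \sech^2(X/2)$, the weighted functions $(1+|X|)^k \cosh^q \sigma_\beta^{(j)}$ belong to $L^2 \cap L^\infty$ for every $k,j \ge 0$ provided $q$ is small enough, so it suffices to obtain pointwise bounds on $\partial_X^{s}(\Phi_\eps^a - \Phi_\eps^0)$, $s \le r$, that may grow at most polynomially in $|X|$.

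Next I would split $\Phi_\eps^a(X) - \Phi_\eps^0(X) = \phi_\eps^a(K_\eps^a X) - \cos(K_\eps X)$ as $[\phi_\eps^a(K_\eps^a X) - \cos(K_\eps^a X)] + [\cos(K_\eps^a X) - \cos(K_\eps X)]$ and handle each bracket separately. The chain rule applied to the first bracket gives $(K_\eps^a)^s$ times $[(\phi_\eps^a)^{(s)} - \cos^{(s)}](K_\eps^a X)$, which by Sobolev embedding on the torus and \eqref{philip} is bounded in $L^\infty$ by $C_s|a|$; combined with $K_\eps^a \le C\eps^{-1}$ this piece contributes $C_s \eps^{-s}|a|$. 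For the second bracket I would add and subtract $(K_\eps^a)^s \cos^{(s)}(K_\eps X)$ and apply \eqref{Klip} together with the elementary identities $|(K_\eps^a)^s - K_\eps^s| \le C_s \eps^{-(s-1)}|a|$ and $|\cos^{(s)}(K_\eps^a X) - \cos^{(s)}(K_\eps X)| \le |K_\eps^a - K_\eps||X|$ to obtain the pointwise bound $C_s \eps^{-s}|a|(1+|X|)$, whose polynomial factor is absorbed by the exponential weight in the subsequent $L^2$ integration.

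Combining these pieces and reinstating the prefactor $-2a$ produces the first estimate $\|\J_2\|_{r,q} \le C_r \eps^{-r} a^2$. For the Lipschitz estimate I would split
$$\J_2 - \tilde{\J}_2 = -2(a - \tilde{a})\sigma_\beta(\Phi_\eps^a - \Phi_\eps^0) - 2\tilde{a}\,\sigma_\beta(\Phi_\eps^a - \Phi_\eps^{\tilde{a}})$$
and apply exactly the same analysis to each summand; the full Lipschitz statements in \eqref{Klip} and \eqref{philip} deliver the bound $\|\sigma_\beta(\Phi_\eps^a - \Phi_\eps^{\tilde{a}})\|_{r,q} \le C_r \eps^{-r}|a - \tilde{a}|$ via an identical decomposition, at which point the second estimate follows immediately. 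The main obstacle I expect is simply the careful bookkeeping of the $\eps^{-r}$ scaling through the chain rule and making sure the polynomial growth in $|X|$ is genuinely absorbed by $\cosh^q \sigma_\beta^{(j)}$; beyond that the proof is a routine application of Leibniz, the chain rule, and Sobolev embedding on the torus.
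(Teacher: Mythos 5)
Your proposal is correct and follows essentially the same strategy as the paper's: obtain a pointwise bound on $\partial_X^s(\Phi_\eps^a-\Phi_\eps^{\tilde a})$ of the form $C_s\,\eps^{-s}|a-\tilde a|(1+|X|)$ by interpolating through an intermediate function, invoking \eqref{Klip}, \eqref{philip}, the chain rule, and the mean value theorem, and then absorb the linear growth in $|X|$ using the exponential decay of $\sigma_\beta$ after multiplying by $\cosh^q$. The only cosmetic differences are in the algebraic splitting — the paper interpolates through $\phi_\eps^a(K_\eps^{\tilde a}X)$ (frequency first, then profile), whereas you interpolate through $\cos(K_\eps^a X)$ and $\phi_\eps^{\tilde a}(K_\eps^a X)$ (profile first, then frequency) — and the paper proves only the Lipschitz estimate, noting it implies the first, while you handle the first directly; neither change is substantive.
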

As for $\J_3$, it is roughly $\O(aR)$. Specifically:
\begin{lemma}\label{J3 lemma} There exists $\ep_0>0$ and $q_0>0$ such that, for all $r \ge 0$, $q \in [0,q_0]$ and $\ep \in (0,\ep_0]$ 
we have
\begin{equation}\label{J3 estimates}
\left\{\begin{aligned}
\| \J_3\|_{r,q} &\le C_r \ep^{-r} |a|\|R\|_{r,q}\\
&\hspace{-2em}\mand
\| \J_3-\tilde{\J}_3\|_{r,0} \le C_{r,q} \ep^{-r} \left( (\|R\|_{r,q}+\|\tilde{R}\|_{r,q})|a-\tilde{a}| +  (|a|+|\tilde{a}|)\|R-\tilde{R}\|_{r,0}\right).
\end{aligned}\right.
\end{equation}
\end{lemma}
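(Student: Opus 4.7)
The plan is to estimate $\J_3 = -2a\Phi^a_\eps R$ using the explicit product structure, exploiting two facts from Theorem \ref{MV}: first, $\Phi^a_\eps(X) = \phi^a_\eps(K^a_\eps X)$ with $\phi^a_\eps$ smooth and $2\pi$-periodic, uniformly in $a$ and $\eps$ (which can be bootstrapped from \eqref{philip} and the base case $\phi^0_\eps = \cos$); second, $K^a_\eps = \mathcal{O}(\eps^{-1})$ via \eqref{this is kep} and \eqref{Klip}. By the chain rule, each derivative of $\Phi^a_\eps$ brings down a factor of $K^a_\eps$, so $\|\Phi^a_\eps\|_{W^{r,\infty}(\R)} \le C_r \eps^{-r}$. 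The first estimate in \eqref{J3 estimates} then follows from the standard Moser/Leibniz product estimate in $H^r_q$ (the weight $\cosh^q(X)$ multiplies only the $R$ factor, so it passes through cleanly): $\|\Phi^a_\eps R\|_{r,q}\le C_r \|\Phi^a_\eps\|_{W^{r,\infty}} \|R\|_{r,q}$.

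For the Lipschitz estimate, first telescope
\be\label{J3telescope}
\J_3 - \tilde\J_3 = -2a\Phi^a_\eps(R-\tilde R) - 2a\bigl(\Phi^a_\eps - \Phi^{\tilde a}_\eps\bigr)\tilde R - 2(a-\tilde a)\Phi^{\tilde a}_\eps \tilde R.
\ee
The first term is bounded by $C_r |a|\eps^{-r}\|R-\tilde R\|_{r,0}$ exactly as above. The third contributes $C_r|a-\tilde a|\eps^{-r}\|\tilde R\|_{r,q}$. For the middle term I would split the $a$-dependence in two using
\[
\Phi^a_\eps(X) - \Phi^{\tilde a}_\eps(X) = \bigl[\phi^a_\eps(K^a_\eps X) - \phi^a_\eps(K^{\tilde a}_\eps X)\bigr] + \bigl[(\phi^a_\eps - \phi^{\tilde a}_\eps)(K^{\tilde a}_\eps X)\bigr].
\]
The second bracket is handled with \eqref{philip} and Sobolev embedding in the periodic setting to upgrade to $W^{r,\infty}_\per$, after which the chain rule again absorbs $r$ derivatives into $\eps^{-r}$, yielding $C_r\eps^{-r}|a-\tilde a|$ in $W^{r,\infty}(\R)$.

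The first bracket is the main obstacle, because it measures the sensitivity of $\Phi^a_\eps$ to the frequency parameter itself. Writing it by the fundamental theorem of calculus as $(K^a_\eps - K^{\tilde a}_\eps)X\int_0^1 (\phi^a_\eps)'\bigl(\lambda K^a_\eps X + (1-\lambda)K^{\tilde a}_\eps X\bigr)d\lambda$ produces an unavoidable factor of $X$, and differentiating $r$ times by Leibniz and repeated mean-value applications can produce up to $|X|^r$ together with the usual $\eps^{-r}$ from the chain rule. The factor $K^a_\eps - K^{\tilde a}_\eps$ is controlled by $C|a-\tilde a|$ via \eqref{Klip}. These polynomial $X$ factors are absorbed when we multiply against $\tilde R$ in the $L^2$ norm: since $\tilde R \in H^r_q$, we have $\||X|^j\tilde R^{(s)}\|_{L^2} \le C_{j,q}\|\tilde R\|_{s,q}$ for $j,s \le r$, and this is precisely where the constant $C_{r,q}$ (depending on the decay rate) enters. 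Combining gives $\|2a(\Phi^a_\eps - \Phi^{\tilde a}_\eps)\tilde R\|_{r,0} \le C_{r,q}\eps^{-r}|a||a-\tilde a|\|\tilde R\|_{r,q}$, and assembling the three contributions of \eqref{J3telescope} yields \eqref{J3 estimates}.
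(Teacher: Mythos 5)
Your proof is correct and takes essentially the same approach as the paper: the same telescoping decomposition (modulo a symmetric swap of $R$ and $\tilde R$ between terms), the same chain-rule/mean-value argument on $\Phi^a_\ep - \Phi^{\tilde a}_\ep$, and the same use of the exponential weight to absorb the polynomial factor against $\tilde R$. The only minor inefficiency is that you allow up to $|X|^r$ in that polynomial factor, whereas the paper's estimate \eqref{annoying} shows the growth is in fact only $(1+|X|)$ (each $X$-derivative of $\Phi^a_\ep - \Phi^{\tilde a}_\ep$ brings down a factor of $K_\ep^a = \mathcal{O}(\ep^{-1})$, not an extra $X$), but this does not affect the final bound since the weight absorbs any fixed polynomial.
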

In both \eqref{J2 estimates} and \eqref{J3 estimates} above, $\tilde{\J}_l$ simply represents the quantity $\J_l$ evaluated at $\tilde{R}$ and $\tilde{a}$.
An important feature of these estimates is that there is a mismatch in the decay rates of the pieces in 
the estimate in \eqref{J3 estimates}(ii): specifically, on the left we measure in $H^r_0$ but the right requires $R$ and $\tilde{R}$ to be in
$H^r_q$ with $q > 0$.  In particular, the constant $C_{r,q}$ diverges as $q \to 0^+$.
We provide the justification for Lemmas \ref{J2 lemma} and \ref{J3 lemma} in the Appendix.

Our goal is now to resolve the nonlinear equations \eqref{our shark} for $R$ and $a$.  
As in Section \ref{S:linearsolve1}, we will proceed by first considering the solvability
of the associated nonhomogeneous linear equation.  After we have shown that  this can be continuously solved, we solve the full nonlinear
equation \eqref{our shark} through iteration.  

\subsection{The linear problem}
The left hand side of \eqref{our shark} is linear in $R$ and $a$. 
We claim it is a bijection in an appropriate sense. Specifically we have the following linear solvability result.

\begin{proposition} \label{linear mover}
There exists $\ep_0>0$ and $q_0>0$ for which the following hold when 
$\ep \in (0,\ep_0]$, $q \in (0,q_0]$ and $r \ge 0$. There are linear maps 
\[
\Rc_\ep : E^{r}_q \longrightarrow E^{r+1/2}_q  \mand \A_\ep :E^{r}_q \longrightarrow \R
\]
such that 
\be\label{linear shark}
\L_\ep  R + 2  \sigma_\beta R + 2 a \sigma_\beta \Phi_\ep^0 = G \in E^{r}_q
\ee
if and only if
\[
R = \Rc_\ep G \mand a = \A_\ep G.
\]
Moreover these maps are continuous and satisfy the estimates
\[
\|\Rc_\ep\|_{B(E^r_q,E^{r+1/2}_q)} \le C_{r,q} \mand \|\A_\ep\|_{B(E^r_q,\R)} \le C_{r,q} \ep^r.
\]
\end{proposition}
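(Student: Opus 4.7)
My plan is to invert the linear operator $T_\ep(R,a) := \L_\ep R + 2\sigma_\beta R + 2a\sigma_\beta\Phi_\ep^0$ viewed as a map $E^{r+1/2}_q\times\R \to E^r_q$ in two stages: first build a bounded right inverse of $\L_\ep + 2\sigma_\beta\,\cdot$ on a codimension-one subspace of $E^r_q$, then use the scalar $a$ to clear the remaining one-dimensional obstruction.

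The first stage is essentially a Paley-Wiener/contour-shifting argument. For any $R \in E^{r+1/2}_q$ the Fourier transform $\widehat R$ extends holomorphically to the strip $|\mathrm{Im}\,K| < q$, and for $\beta \in (0,1/3)$ the symbol $l_\ep(K)$ has only simple real zeros at $\pm K_\ep$; away from these, $|l_\ep(K)|$ is bounded below uniformly in $\ep$, behaving like $1+K^2$ for moderate $|K|$ (using the expansion \eqref{expando}) and like $\sqrt{\beta}\,\ep^{-3/2}|K|^{1/2}$ for $|K|\gg K_\ep$. Thus $\L_\ep R = F$ has a unique even solution $R \in E^{r+1/2}_q$ precisely when the single obstruction $\widehat F(K_\ep) = 0$ holds (the $\sin$ mode is killed by evenness), and the partial inverse is bounded independently of $\ep$. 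Since $\sigma_\beta\in H^\infty_{q_1}$ for every $q_1$, multiplication by $2\sigma_\beta$ is compact as a map $E^{r+1/2}_q \to E^r_q$, so $\L_\ep + 2\sigma_\beta\,\cdot$ is Fredholm of the same index on this scale; a standard perturbation argument then produces a bounded right inverse $\mathcal{S}_\ep$ with an obstruction functional of the form $F \mapsto \langle F,\psi_\ep\rangle$, where $\psi_\ep$ is a small perturbation of Fourier evaluation at $K_\ep$.

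To close the argument I use $a$ to absorb that obstruction. Since $\Phi_\ep^0(X) = \cos(K_\ep X)$, evaluating $\widehat{\sigma_\beta\Phi_\ep^0}$ at $K_\ep$ gives $\tfrac12\widehat{\sigma_\beta}(0) + \tfrac12\widehat{\sigma_\beta}(2K_\ep)$, which tends to the nonzero constant $\tfrac12\widehat{\sigma_\beta}(0)$ as $\ep\to 0^+$ (the second term vanishes super-algebraically because $\sigma_\beta$ is analytic in a horizontal strip and $K_\ep \sim \ep^{-1}$). Hence $\langle\sigma_\beta \Phi_\ep^0,\psi_\ep\rangle$ is bounded below uniformly in $\ep$, and I set
\[
\A_\ep G := \frac{\langle G,\psi_\ep\rangle}{2\langle \sigma_\beta\Phi_\ep^0,\psi_\ep\rangle}, \qquad \Rc_\ep G := \mathcal{S}_\ep\bigl(G - 2(\A_\ep G)\,\sigma_\beta \Phi_\ep^0\bigr).
\]
Uniqueness of $(R,a)$ is immediate from the nondegeneracy of this denominator. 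The key quantitative point is the bound on $\A_\ep$: for $G \in E^r_q$ with $q > 0$, integrating by parts $r$ times in the definition of $\widehat G(K_\ep)$ (justified because the exponential weight makes $G^{(r)} \in L^1$) yields
\[
|\widehat G(K_\ep)| \le \frac{C_q}{K_\ep^r}\,\|G\|_{H^r_q},
\]
and since $K_\ep \sim \ep^{-1}$ this translates to the stated $|\A_\ep G| \le C_{r,q}\,\ep^r\|G\|_{H^r_q}$.

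The main obstacle I anticipate is carrying out the first-stage inversion with uniform-in-$\ep$ bounds. The resonant frequency $K_\ep$ drifts to infinity while $l_\ep$ transitions from an $O(K^2)$ shape for moderate $|K|$ to an $O(\ep^{-3/2}|K|^{1/2})$ shape for $|K|\gg K_\ep$, so inverting $l_\ep$ requires a Fourier-side partition of unity that isolates a small $O(1)$-width neighborhood of $\pm K_\ep$ (where one inverts by contour deformation into the strip and enforces the obstruction) from the complement (where $l_\ep$ is uniformly elliptic), with all cutoff estimates tracked uniformly in $\ep$. Once this partition argument is in place, the compact perturbation by $2\sigma_\beta\,\cdot$ and the one-dimensional adjustment by $a$ are routine.
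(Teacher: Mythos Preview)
Your overall architecture is right---invert $\L_\ep$ on a codimension-one subspace, add in the $2\sigma_\beta$ term, then use $a$ to kill the remaining obstruction---and your treatment of $\A_\ep$ via $|\hat G(K_\ep)|\le C_q K_\ep^{-r}\|G\|_{r,q}$ matches the paper's.  But there is a genuine gap in what you call the ``routine'' step.

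The assertion that ``multiplication by $2\sigma_\beta$ is compact, so $\L_\ep+2\sigma_\beta\cdot$ is Fredholm of the same index, and a standard perturbation argument produces a bounded right inverse $\mathcal{S}_\ep$'' does not deliver what you need.  Fredholm stability under compact perturbations preserves the index, but gives you neither (a) that the kernel remains trivial, nor (b) any control on the norm of the inverse as $\ep\to 0$.  Point (b) is the crux: the family $\L_\ep$ is a \emph{singular} perturbation (the symbol $l_\ep(K)$ does not converge uniformly in $K$), so there is no abstract continuity argument that pins down the inverse of $1+\L_\ep^{-1}(2\sigma_\beta\cdot)$ uniformly in $\ep$.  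Relatedly, your claim that the cokernel functional $\psi_\ep$ is ``a small perturbation of Fourier evaluation at $K_\ep$'' is not a consequence of compact-perturbation theory, and without it your bound $|\A_\ep G|\le C_{r,q}\ep^r\|G\|_{r,q}$ does not follow.

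What the paper does to close this gap is a quantitative desingularization: it proves (Lemma~\ref{desing lemma}) that $\L_\ep^{-1}\P_\ep = -\gamma_\beta^{-1}(1-\partial_X^2)^{-1}+O(\ep)$ in operator norm on $E^r_q$, so that the operator you must invert, $\S_\ep=1+\L_\ep^{-1}\P_\ep(2\sigma_\beta\cdot)$, is an $O(\ep)$ perturbation of the \emph{fixed} operator $\S_0=1-2\gamma_\beta^{-1}(1-\partial_X^2)^{-1}(\sigma_\beta\cdot)$.  This $\S_0$ is recognized as (the resolvent form of) the KdV linearization about $\sigma_\beta$, whose only kernel element on $\R$ is the odd function $\sigma_\beta'$; hence $\S_0$ is invertible on even functions with an $\ep$-independent bound (Lemma~\ref{sturm}), and a Neumann series gives uniform invertibility of $\S_\ep$.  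This identification with the KdV linearization is the missing idea in your proposal; once you add it, your scheme becomes essentially the paper's.
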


\begin{remark}
It is important to note that the size of $\mathcal{A}_\eps$ is directly related to the (Sobolev) smoothness of the forcing function $G$.
This observation will be important in our coming work.
\end{remark}

\begin{proof}
Recalling \eqref{kep prop}, we first see that to solve \eqref{linear shark} requires the linear solvability condition
\be\label{solve linear}
 2  \hat{\sigma_\beta R}(K_\ep) + 2 a \hat{\sigma_\beta \Phi_\ep^0}(K_\ep) = \hat{G}(K_\ep)
\ee
to hold.  Since $\Phi_\ep^0(X) = \cos(K_\ep X)$ by Theorem \ref{MV}, we can calculate 
\be\label{kappa}\begin{split}
2\hat{\sigma_\beta \Phi_\ep^0}(K_\ep)&={1 \over \pi} \int_\R \sigma_\beta(X) \cos(K_\ep X) e^{-i K_\ep X} dX\\
 &={1 \over \pi} \int_\R \left({1 \over 2} \sigma_\beta(X) + {1 \over 2} \sigma_\beta(X) e^{-2iK_\ep X}\right) dX\\
 &=  \hat{\sigma_\beta}(0) +  \hat{\sigma_\beta}(2K_\eps)\\&=:\chi_\ep
\end{split}\ee
Note that since $\sigma_\beta\in L^1(\RM)$ is positive, we know $\hat\sigma_\beta(0)>0$.
Moreover, the analyticity of $\sigma_\beta$ and the fact that $K_\ep = \O(1/\ep)$ implies $|\hat{\sigma_\beta}(2K_\ep)|$ is exponentially small in $\ep$.  
Consequently  $\chi_\ep$ and $\chi_\ep^{-1}$ are bounded uniformly in $\ep$ for $0<\eps\ll 1$.
It follows that we can solve \eqref{solve linear} for $a$ explicitly in terms of $G$ and $R$ as
\be\label{this is a}
a = \chi_\ep^{-1} \left(\hat{G}(K_\ep) - 2 \hat{\sigma_\beta R}(K_\ep)\right),
\ee
thus guaranteeing that this choice of $a$ ensures the linear solvability \eqref{solve linear} holds for a given $G\in E^r_q$, provided
that we can now resolve \eqref{linear shark} for $R$.

Before substituting \eqref{this is a} into \eqref{linear shark},  define the operator $\P_\eps:E^r_q\to E^r_q$ by
\[
\P_\ep F = F-2\hat{F}(K_\ep)\chi_\ep^{-1} \sigma_\beta \Phi_\ep^0.
\]
By construction, for all  $F\in E^r_q$ 
\be\label{pi prop}
\hat{\P_\ep F}(K_\ep) = 0.
\ee
Furthermore
  \be\label{Pep estimate}
\|\P_\ep\|_{B(E^r_q)} \le C_{r,q}.
\ee
To prove this, one needs the Riemann-Lebesgue estimate
\be\label{RL}
\left \vert \hat{F}(K) \right \vert \le C K^{-r} q^{-1/2} \| F\|_{r,q}
\ee
which holds for any $\pm K > 1$ and $q > 0$.  See Lemma A.5 in \cite{Faver-Wright} for a proof.
With this, \eqref{Pep estimate} follows  quickly from the fact that $\Phi_\ep^0(X) = \cos(K_\ep X)$
and that $\chi_\ep^{-1}$ is bounded above.
Specifically
\bes\begin{split}
\| \P_\ep F\|_{r,q} &\le \| F\|_{r,q} + 2\chi_\ep^{-1} \left \vert \hat{F}(K_\ep) \right \vert \| \sigma_\beta \Phi_\ep^0\|_{r,q}\\
&\le \left(1 +C_q K_\ep^{-r}  \| \sigma_\beta \|_{r,q} \| \Phi_\ep^0\|_{W^{r,\infty}(\R)}\right) \| F\|_{r,q} \\
&\le \left(1 +C_q K_\ep^{-r}  \| \sigma_\beta \|_{r,q}K_\ep^ r\right) \| F\|_{r,q} \\
& \le C_{r,q}  \| F\|_{r,q}
\end{split}
\ees

Substituting \eqref{this is a} into \eqref{linear shark} gives the equation
\be\label{ls 2}
\L_\ep R + \P_\ep (2 \sigma_\beta R) = \P_\ep G.
\ee
Given \eqref{pi prop}, the linear solvability condition coming from \eqref{kep prop} is satisfied in \eqref{ls 2}.  The next result 
shows that this the operator $\mathcal{L}_\eps$ is indeed continuously invertible on the range of $\P_\eps$.
%

\begin{lemma}\label{suff}
There exists $q_0>0$ and $\ep_0>0$ such the following holds for all $q \in (0,q_0]$, $\ep \in (0,\ep_0]$ and $r \ge 0$.
Suppose that $F \in E^r_q$ and 
$\hat{F}(K_\ep) =0$. Then there exists a unique $R \in E^{r+1/2}_q$, which we denote by $\L_\ep^{-1} F$, such that $\L_\ep R = F$. 
Finally, we have 
$$
\| R\|_{r+1/2,q} = \| \L_\ep^{-1} F\|_{r+1/2,q} \le C_{r,q} \| F\|_{r,q}.
$$
\end{lemma}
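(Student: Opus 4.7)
The plan is to invert $\L_\ep$ on the Fourier side. Writing the equation $\L_\ep R = F$ as $l_\ep(K)\hat R(K) = \hat F(K)$, the formal solution is $\hat R = \hat F/l_\ep$, and the work is to show that this makes sense and belongs to the required weighted Sobolev space. I first establish three properties of the symbol. (i) The only real zeros of $l_\ep$ are the simple zeros $\pm K_\ep$, and $|l_\ep'(\pm K_\ep)| \ge c/\ep$, which follows from $l_\ep'(K) = \ep^{-1} m_\beta'(\ep K)$ together with the fact that $\ep K_\ep$ converges as $\ep\to 0^+$ to a point $k^{**}$ on the strictly increasing branch of $m_\beta$, where $m_\beta'(k^{**}) > 0$. (ii) The high-frequency lower bound $|l_\ep(K)| \ge c \langle K\rangle^{1/2}$ for $|K|$ outside a fixed neighborhood of $\pm K_\ep$, coming from the $\sqrt{\beta}\,|k|^{1/2}$ asymptotic of $m_\beta$ at infinity; this is the only source of the gain of $1/2$ derivative. (iii) For $q_0$ sufficiently small, $l_\ep$ extends holomorphically to the strip $\LCB|\Im K| < q_0\RCB$ with $\pm K_\ep$ as its only zeros there: simple real zeros of a real-analytic function remain real and simple under small perturbation, and no new zeros appear in a bounded complex strip by continuity and the lower bound (ii).

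Because $F$ is even, $\hat F(\pm K_\ep) = 0$, so the apparent singularities of $\hat F/l_\ep$ at $\pm K_\ep$ are removable and $\hat R$ is well defined on the real line; the inverse Fourier transform, combined with evenness of $l_\ep$, produces an even $R$. Uniqueness within $E^{r+1/2}_q$ is immediate: if $\L_\ep R = 0$ then $\hat R$ is a distribution supported on $\LCB \pm K_\ep\RCB$, so $R$ is a finite sum of polynomials in $x$ times $\cos(K_\ep x)$ and $\sin(K_\ep x)$, none of which lie in any $H^r_q$ with $q > 0$.

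For the quantitative estimate I invoke the Paley--Wiener characterization: $F \in H^r_q$ is equivalent to $\hat F$ extending holomorphically to the strip $\LCB|\Im K| < q\RCB$ with $\sup_{|\eta|<q}\|\langle K\rangle^r \hat F(\cdot+i\eta)\|_{L^2}$ comparable to $\|F\|_{r,q}$. For $q \in (0,q_0]$, $\hat R = \hat F/l_\ep$ is likewise holomorphic on the strip, and it suffices to bound $\sup_{|\eta|<q}\|\langle K\rangle^{r+1/2}\hat R(\cdot+i\eta)\|_{L^2}$ by $C_{r,q}\|F\|_{r,q}$. Split the $K$-integration into the bulk region $\LCB|K\mp K_\ep| > \delta\RCB$ and two small neighborhoods of $\pm K_\ep$. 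On the bulk, property (ii), which extends to the strip by continuity after shrinking $q_0$, gives the pointwise bound $\langle K\rangle^{r+1/2}|\hat R| \le C\langle K\rangle^r|\hat F|$, producing the desired $L^2$ control directly. On each small neighborhood of $\pm K_\ep$, use holomorphicity to factor $\hat F(K+i\eta) = (K-K_\ep)\,G(K+i\eta)$ with $G$ holomorphic; by a Cauchy estimate on a small circle staying strictly inside the strip, $\sup|G|$ on the interval is controlled by $\sup|\hat F|$ on the circle, which in turn is controlled by $\|\hat F(\cdot+i\eta')\|_{L^2}$ on a slightly indented horizontal line via another Cauchy-type estimate. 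Dividing by the regular factor $l_\ep(K+i\eta)/(K - K_\ep)$, whose modulus is bounded below by $c/\ep$ near $\pm K_\ep$, converts this into a uniform pointwise (hence $L^2$-on-compact) bound on $\hat R$ there.

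The main obstacle is precisely this local analysis near $\pm K_\ep$: converting the single scalar vanishing $\hat F(K_\ep) = 0$ into a uniform $L^2$ bound on the quotient $\hat F/l_\ep$ across all horizontal slices of the strip. The Cauchy-estimate step requires a contour that stays strictly inside the strip, and its admissible radius shrinks as $q \to 0^+$, which is exactly what forces the constant to depend on $q$ and to blow up as $q \to 0^+$, and accounts for the $C_{r,q}$ (rather than $C_r$) dependence claimed in the statement.
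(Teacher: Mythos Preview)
Your approach is correct and is essentially a hands-on unpacking of what the paper does by citing Beale's Theorem~\ref{beale1} together with the symbol estimate of Lemma~\ref{pole analysis}: both proofs invert $\L_\ep$ on the Fourier side, use analyticity of $l_\ep^{-1}$ on a strip away from the simple real poles $\pm K_\ep$, and extract the $1/2$-derivative gain from the $|k|^{1/2}$ growth of $m_\beta$.

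The one place where the paper's route is genuinely cleaner is the local analysis near $\pm K_\ep$. You use the Paley--Wiener characterization with $\sup_{|\eta|<q}\|\langle K\rangle^r\hat F(\cdot+i\eta)\|_{L^2}$, which forces you to handle horizontal lines arbitrarily close to the real axis and hence to the zeros of $l_\ep$; this is why you need the factorization $\hat F(Z)=(Z-K_\ep)G(Z)$ and the Cauchy-circle argument. The paper instead uses the equivalent norm built only from the two boundary lines $\Im Z=\pm q$ (see the proof of Lemma~\ref{trunc lemma}). On those lines $l_\ep$ is nowhere zero, so the whole problem reduces to the single pointwise bound $\sup_{K\in\R}|(1+K^2)^{1/4}l_\ep^{-1}(K\pm iq)|\le C_q$, and no factorization of $\hat F$ is needed at all. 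The $q$-dependence then arises simply because $|l_\ep^{-1}(K+iq)|\sim \ep/(c|q|)$ near $K=K_\ep$, rather than from shrinking Cauchy contours. Your argument gets to the same place, but with an avoidable detour; the condition $\hat F(K_\ep)=0$ is only needed to put $F$ in the domain of the multiplier (removable singularity on the real line), not for the weighted-norm estimate itself.

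Two minor corrections: your factorization should read $\hat F(Z)=(Z-K_\ep)G(Z)$ with $Z=K+i\eta$, not $(K-K_\ep)G(K+i\eta)$; and the lower bound you quote for $|l_\ep'(\pm K_\ep)|$ is $c/\ep$, which makes the regular factor $l_\ep(Z)/(Z-K_\ep)$ of size $c/\ep$, hence its reciprocal is $O(\ep)$ --- harmless, but worth stating correctly.
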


We provide the proof in the Appendix.  Together with \eqref{pi prop}, Lemma \ref{suff} allows us to rewrite \eqref{ls 2} as
\be\label{ls 4}
R + \L_\ep^{-1}\P_\ep (2 \sigma_\beta R) = \L_\ep^{-1} \P_\ep G.
\ee
At first glance, it is not entirely clear that we have made progress towards our goal of solving for $R$, as we have to figure out how to invert
the operator $1 + \L_\ep^{-1}\P_\ep(2 \sigma_\beta \cdot)$.
To make this step requires a critical feature of $\L_\ep^{-1} \P_\ep$: it is small perturbation of $-\gamma_\beta^{-1} (1-\partial_X^2)^{-1}$.
Specifically, if we put
\[
\G_\ep := \ep^{-1} \left(\L_\ep^{-1} \P_\ep+\gamma_\beta^{-1} (1-\partial_X^2)^{-1}\right)
\]
then we  can establish the following result.

\begin{lemma}\label{desing lemma}
There exists $q_0>0$ and $\ep_0>0$ such that for all $q \in (0,q_0]$, $\ep \in (0,\ep_0]$ and $r \ge 0$ we have
$$\| \G_\ep\|_{B(E^r_q,E^r_q)} \le C_{r,q}.$$
\end{lemma}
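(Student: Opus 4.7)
I would work on the Fourier side, where both operators constituting $\G_\ep$ act explicitly. Setting $G=\P_\ep F$ for $F\in E^r_q$, the vanishing $\hat G(\pm K_\ep)=0$ from \eqref{pi prop} and the explicit form of $\P_\ep$ give
\bes
\hat{\G_\ep F}(K) = \frac{\hat F(K)}{\ep}\LB \frac{1}{l_\ep(K)} + \frac{1}{\gamma_\beta(1+K^2)}\RB - \frac{2\chi_\ep^{-1}\hat F(K_\ep)}{\ep\,l_\ep(K)}\,\hat{\sigma_\beta \Phi_\ep^0}(K).
\ees
Each summand individually has a simple pole at $K=\pm K_\ep$, but a residue computation -- using $\chi_\ep = 2\hat{\sigma_\beta\Phi_\ep^0}(K_\ep)$ from \eqref{kappa} together with the simple zero of $l_\ep$ at $\pm K_\ep$ -- shows that the poles cancel, so the combined right-hand side is continuous across $\pm K_\ep$ (and across $-K_\ep$ by evenness).

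Next, I would quantify the symbol in the bracket. From $l_\ep(K)+\gamma_\beta(1+K^2)=\ep^{-2}[m_\beta(\ep K)-1+\gamma_\beta\ep^2 K^2]$ and the expansion \eqref{expando}, this quantity equals $\ep^2 s_\ep(K)$, with $s_\ep$ controlled uniformly in $\ep$ via the smoothness of $m_\beta$ near $0$ and its sublinear growth at infinity. Hence
\bes
\frac{1}{l_\ep(K)}+\frac{1}{\gamma_\beta(1+K^2)} = \frac{\ep^2 s_\ep(K)}{\gamma_\beta(1+K^2)\, l_\ep(K)},
\ees
which carries an explicit $\ep^2$ in the numerator. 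I would then split $K$-space into three regions: (a) $|K|\le \delta/\ep$, where $l_\ep(K)\asymp -\gamma_\beta(1+K^2)$ by Taylor expansion, so the bracket is $O(\ep^2)$ and after division by $\ep$ leaves $O(\ep)$; (b) $|K|$ of order $1/\ep$ but bounded away from $\pm K_\ep$, where both summands of the bracket are individually $O(\ep^2)$ because $|l_\ep(K)|\gtrsim \ep^{-2}$ there; (c) a small neighborhood of $\pm K_\ep$, where the pole of $1/l_\ep$ must be absorbed by combining with the rank-one term, via a Laurent expansion about $\pm K_\ep$ together with the lower bound $|l_\ep'(\pm K_\ep)|=\ep^{-1}|m_\beta'(k_{\beta,c_\ep})|\gtrsim 1/\ep$ and regularity of $\hat{\sigma_\beta\Phi_\ep^0}$. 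Summing the three estimates gives a uniform-in-$\ep$ $L^2$ bound.

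To promote the $L^2$ bound to the claimed estimate in $B(E^r_q, E^r_q)$, I would use that membership in $E^r_q$ corresponds to $\hat F$ admitting an analytic continuation to the strip $\{|\Im K|<q\}$ with weighted $L^2$ control on horizontal contours. For $q_0$ small enough that $\ep q_0$ stays inside the disk where $m_\beta$ is analytic, the combined symbol extends holomorphically across $|\Im K|<q_0$, the pole-cancellation at $\pm K_\ep$ persists off the real axis, and a standard Paley--Wiener/Plancherel argument on horizontal contours transfers the symbol bound to the operator bound. The polynomial weight $(1+K^2)^r$ appearing in the $H^r_q$-norm is absorbed by the $(1+K^2)^{-1}$ already present in the symbol, so no derivatives are lost and the bound holds for every $r\ge 0$. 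The main obstacle will be region (c): since $\pm K_\ep=\mathcal{O}(1/\ep)$ drifts to infinity as $\ep\to 0^+$, the residue calculation only kills the leading $(K\mp K_\ep)^{-1}$ singularity, and one still needs quantitative control on the sub-leading Laurent tail uniformly in $\ep$, using the $\ep$-scaling of $l_\ep'(\pm K_\ep)$ and the analyticity of $\hat{\sigma_\beta\Phi_\ep^0}$ at points that themselves move with $\ep$. The choice of $q_0$ must likewise be made uniformly in $\ep$, which requires confirming that no spurious zeros of $l_\ep$ appear in the strip $|\Im K|<q_0$ for small $\ep$.
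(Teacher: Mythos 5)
Your plan would work and the technical core (the three-region symbol estimate on the shifted contour $\Im K=q$, then Paley--Wiener to transfer to $E^r_q$) is exactly Lemma~\ref{pole analysis} and Theorem~\ref{beale1} in the paper. But the decomposition of $\G_\ep$ you choose is genuinely different from the paper's, and the paper's is cleaner. You write $\hat{\G_\ep F}$ as the sum of a multiplier acting on $\hat F$ and a rank-one correction, \emph{each of which is individually singular at $\pm K_\ep$}, and you then rely on a residue cancellation to show the combination is regular; this is what forces you to track a ``Laurent tail'' uniformly in $\ep$ near poles that drift like $1/\ep$. The paper instead regroups so that each piece is bounded on its own: it writes $\G_\ep = \ep^{-1}\bigl(\L_\ep^{-1}+\gamma_\beta^{-1}(1-\partial_X^2)^{-1}\bigr)\P_\ep + \ep^{-1}\gamma_\beta^{-1}(1-\partial_X^2)^{-1}(1-\P_\ep)$. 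In the first piece, $\widehat{\P_\ep F}(\pm K_\ep)=0$ by construction (this is \eqref{pi prop}), so Theorem~\ref{beale1} applies directly to the meromorphic symbol $l_\ep^{-1}(Z)+\gamma_\beta^{-1}(1+Z^2)^{-1}$ --- the pole cancellation is delegated entirely to the hypothesis of Beale's theorem, and only the uniform-in-$\ep$ bound of the symbol on $\{\Im Z=q\}$ (estimate \eqref{desingularization estimate}) needs to be proved, with no Laurent bookkeeping at the moving poles. The second piece is a rank-one operator with a regular symbol, so it is estimated directly from $|\hat F(K_\ep)|\le C_q\ep^r\|F\|_{r,q}$ together with $\|\sigma_\beta\Phi_\ep^0\|_{r-2,q}\le C_r\ep^{2-r}$, giving $O(\ep^2)$ before the $\ep^{-1}$ prefactor. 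So: same key symbol estimates, but the regrouping through $\P_\ep$ versus $1-\P_\ep$ eliminates the residue/Laurent analysis you identify as your ``main obstacle.'' One more small caution: your phrase ``the pole-cancellation at $\pm K_\ep$ persists off the real axis'' is a bit misleading --- on the contour $\Im K=q$ there is no pole, only a bounded-but-$q^{-1}$-sized symbol; the cancellation (or its abstract substitute, the vanishing of $\widehat{\P_\ep F}$) is needed on the real axis, and the strip contour is there to convert it into a quantitative bound.
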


Again, we provide the proof of Lemma \ref{desing lemma} in the Appendix.  Notice, however that this result is not unexpected 
since the formal expansion \eqref{expando2} indicates
\be\label{formal L}
\L_\ep = -\gamma_\beta (1-\partial_X^2) + \O(\ep^2).
\ee
Lemma \ref{desing lemma} provides a meaningful and rigorous version of \eqref{formal L} and as such represents one of the keys of our analysis.
Indeed, as in our discussion directly above the statement of Theorem \ref{main theorem}, the $\mathcal{L}_\eps$ is a singular perturbation of the operator $-\gamma_\beta(1+\partial_X^2)$,
and resolving this singular limit is one of the main technical difficulties faced in the present study.   

We can rewrite \eqref{ls 3} as
\be\label{ls 3}
\bunderbrace{R -\gamma_\beta^{-1} (1-\partial_X^2)^{-1} (2 \sigma_\beta R) + 2\ep \G_\ep (\sigma_\beta R)}{\S_\ep R}= \L_\ep^{-1} \P_\ep G.
\ee
Using \eqref{desing lemma}, we see that $\S_0 =  1 -\gamma_\beta^{-1} (1-\partial_X^2)^{-1} (2 \sigma_\beta \cdot)$, which is recognized as as $-(1-\partial_X^2)^{-1}$
applied to the linearization of the KdV profile equation \eqref{KDVTWE} about the KdV solitary wave $\sigma_\beta$.  This latter operator has been very well studied in the literature,
and in particular it follows from standard Sturm-Liouville theory on $L^2(\RM)$ that ${\rm ker}(S_0)={\rm span}\{\sigma_\beta'\}$.  Since $\sigma_\beta'$ is odd by construction,
it follows that $\S_0$ is invertible on the class of even functions in $E^r_0(\RM)$ for any $r>0$.  Following (for instance) Appendix D.10 of \cite{Faver}, this observation can be extended to the weighted
space $E^r_q$ via operator conjugation. Specifically:

\begin{lemma}\label{sturm}
There exists $q_0 >0$ such that for all $r \ge 0$ and $q \in [0,q_0]$ the operator $\S_0$
is a bounded and invertible map from $E^r_q \to E^r_q$. In particular 
\[
  \|\S_0\|_{B(E^r_q )} + \|\S_0^{-1}\|_{B(E^r_q)} \le C_{r}.
\]
\end{lemma}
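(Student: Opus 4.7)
The plan is to first establish invertibility of $\S_0$ on the unweighted even space $E^r_0$, and then to pass to $q > 0$ by a conjugation argument that identifies $\S_0 \colon E^r_q \to E^r_q$ with a small perturbation of $\S_0 \colon E^r_0 \to E^r_0$.

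For the unweighted case $q=0$, I would write $\S_0 = I - \K$ with $\K R := 2 \gamma_\beta^{-1}(1-\partial_X^2)^{-1}(\sigma_\beta R)$ and observe that $\K$ is compact on $E^r_0$: it is realized as an integral operator with kernel proportional to $e^{-|X-Y|}\sigma_\beta(Y)$, which is Hilbert--Schmidt thanks to the exponential decay of $\sigma_\beta$ and the smoothing of $(1-\partial_X^2)^{-1}$. Hence $\S_0$ is Fredholm of index zero on $E^r_0$, and it suffices to identify the kernel. If $\S_0 R = 0$, applying $(1-\partial_X^2)$ yields
\bes
R'' - R + 2 \gamma_\beta^{-1} \sigma_\beta R = 0,
\ees
which is precisely the linearization of \eqref{KDVTWE} about $\sigma_\beta$. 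Differentiating \eqref{KDVTWE} shows that $\sigma_\beta'$ solves this equation, and classical Sturm--Liouville oscillation theory, together with the fact that $\sigma_\beta'$ has a single zero on $\R$, gives that the $L^2(\R)$ kernel is one-dimensional and spanned by $\sigma_\beta'$. Since $\sigma_\beta'$ is odd, this kernel is trivial on the even subspace $E^r_0$, and the Fredholm alternative delivers a bounded inverse with $\|\S_0^{-1}\|_{B(E^r_0)} \le C_r$.

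For $q > 0$, I would set $\mu_q(X) := \cosh^q(X)$ and consider the conjugated operator $\tilde{\S}_0 := \mu_q \S_0 \mu_q^{-1}$. Because $f \mapsto \mu_q f$ is an isometric isomorphism $E^r_q \to E^r_0$, $\S_0$ is invertible on $E^r_q$ if and only if $\tilde{\S}_0$ is invertible on $E^r_0$. Since multiplication by $\sigma_\beta$ commutes with $\mu_q$, the only piece affected by the conjugation is $A_q := \mu_q(1-\partial_X^2)^{-1}\mu_q^{-1}$. A direct computation gives $\mu_q \partial_X \mu_q^{-1} = \partial_X - q\tanh$, and hence
\bes
\mu_q(1-\partial_X^2)\mu_q^{-1} = (1-\partial_X^2) + q D_q,
\ees
where $D_q$ is a first-order operator with smooth, bounded coefficients (involving $\tanh$ and $\sech^2$) uniformly in small $q$. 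A Neumann-series inversion then produces $A_q = (1-\partial_X^2)^{-1} + q B_q$ with $B_q$ uniformly bounded on $E^r_0$, so $\tilde{\S}_0 = \S_0 + q E_q$ with $E_q$ uniformly bounded. Taking $q_0$ sufficiently small that $q_0 \|E_q\| \cdot \|\S_0^{-1}\| < 1$ gives invertibility of $\tilde{\S}_0$ by a second Neumann series, and unwinding the conjugation delivers the claim on $E^r_q$ with the stated norm bound.

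The main subtlety is ensuring that a single $q_0 > 0$ works for \emph{all} $r \ge 0$, since the naive perturbation argument produces an $r$-dependent threshold controlled by $\|\S_0^{-1}\|_{B(E^r_0)}$. I would resolve this by a bootstrap: first invert $\S_0$ on the base space $E^0_q$ with $q_0$ chosen absolutely, and then upgrade the regularity of the solution by iterating the identity $R = G + 2\gamma_\beta^{-1}(1-\partial_X^2)^{-1}(\sigma_\beta R)$, using that $(1-\partial_X^2)^{-1}$ is smoothing on the weighted spaces $H^r_q \to H^{r+2}_q$ (valid for small $q$ since the Fourier symbol $1/(1+k^2)$ remains bounded on a complex strip of width $q$). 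This is essentially the strategy carried out in Appendix D.10 of \cite{Faver} for a closely related operator.
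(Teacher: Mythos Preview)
Your proposal is correct and follows essentially the same route as the paper: the paper merely sketches this lemma, observing that $\S_0$ is (up to the factor $(1-\partial_X^2)^{-1}$) the linearization of the KdV profile equation about $\sigma_\beta$, invoking Sturm--Liouville theory to identify $\ker \S_0 = \spn\{\sigma_\beta'\}$ on $L^2(\R)$, noting this kernel is trivial on even functions, and then citing Appendix D.10 of \cite{Faver} for the conjugation argument that passes to $E^r_q$. You have simply supplied the details the paper omits---in particular the Fredholm/compactness step and the bootstrap to get $q_0$ independent of $r$---and your final paragraph even cites the same reference.
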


Notice that Lemma \ref{desing lemma} implies that $\S_\ep$ is a small regular perturbation of $\S_0$ and thus, for $\ep$ sufficiently small
the operator $\S_\eps$ is invertible on $E^r_q$ as well and satisfies the estimate
\[
  \|\S_\ep\|_{B(E^r_q )} + \|\S_\ep^{-1}\|_{B(E^r_q)} \le C_r.
\]
Consequently, we can invert $\S_\eps$ in \eqref{ls 3} and solve for $R$ completely in terms of $G\in E^r_q$ as
\be\label{ls 5}
R =\Rc_\ep G:= \S_\ep^{-1}\L_\ep^{-1} \P_\ep G.
\ee
Following through the above estimates we have we find that
\[
\| \Rc_\ep\|_{B(E^{r}_q,E^{r+1/2}_q)}  \le C_{r,q}.
\]

Furthermore, returning to \eqref{this is a} we find that $a$ is given in terms of $G$ as
\[
a =\A_\ep G:= \chi_\ep^{-1} \left( \hat{G}(K_\ep)  - 2 \hat{\sigma_\beta \Rc_\ep G}(K_\ep)\right).
\]
To estimate the size of $a$ in terms of $\eps$, we use \eqref{RL} and the fact that $K_\ep = \O(1/\ep)$ 
%
to get:
\[
 \|\A_\ep \| _{B(E^r_q,\R)} \le C_{r,q} \ep^r.
\]
This completes the proof of  Proposition \ref{linear mover}.
\end{proof}

\subsection{Nonlinear solvability}

We now return to constructing a solution of the form \eqref{bansatz} to the nonlinear equation \eqref{our shark}.  Thanks to Proposition \ref{linear mover} 
we see that solving \eqref{our shark} is equivalent to solving the fixed point problem
\begin{equation}\label{jaws 2}\begin{split}
R = \Rc_\ep \left(\J_0 + \J_1 + \J_2 + \J_3 \right)=:\N^1_\ep(R,a)\\
a = \A_\ep \left(\J_0 + \J_1 + \J_2 + \J_3 \right)=:\N^2_\ep(R,a).
\end{split}
\end{equation}
on the space $E^r_q\times\RM$, where here the $\J_i$ are defined as in Section \ref{s:beal} above.  The goal is to show that in a sufficiently small neighborhood of $E^r_q\times\RM$, 
the nonlinear system \eqref{jaws 2} has a unique solution.  To this end, we begin by collecting necessary estimates on the nonlinear terms, all of which follow in a direct way by
Proposition \ref{linear mover} and \eqref{J1 estimates}, \eqref{J0 estimate}, \eqref{J2 estimates}, and \eqref{J3 estimates}.
%

\begin{proposition}\label{work it}
There exist $q_*>0$ and $\ep_0>0$ with the following properties.
For all $r \ge 1$ there exist $\kappa_r>0$ such that $\ep \in (0,\ep_0]$ implies
\be\label{N1 map}
\|\N_\ep^1(R,a)\|_{r,q_*} \le \kappa_r\left(\ep^2 +
\ep^{-r+1/2} a^2 + \ep^{-r+1/2} |a| \|R\|_{r-1/2,q_*} + \| R\|^2_{r-1/2,q_*} \right),
\ee
\be\label{N2 map}
|\N_\ep^2(R,a)| \le \kappa_r\left(\ep^{r+2} +
a^2 +  |a| \|R\|_{r,q_*} + \ep^r \| R\|^2_{r,q_*} \right),
\ee
\begin{multline}\label{N1 lip}
\|\N_\ep^1(R,a)-\N_\ep^1(\tilde{R},{\tilde{a}})\|_{r,0} \le 
 \kappa_r \ep^{-r} (\|R\|_{r,q_*}+\|\tilde{R}\|_{r,q_*}+|a|+|{\tilde{a}}|)|a-{\tilde{a}}|\\
+
\kappa_r\left( \|R\|_{r,0} + \| \tilde{R}\|_{r,0} + \ep^{-r}\left(|a|+|{\tilde{a}}|\right) \right)\| R - \tilde{R}\|_{r,0}
\end{multline}
and
\begin{multline}\label{N2 lip}
|\N_\ep^2(R,a)-\N_\ep^2(\tilde{R},{\tilde{a}})|  \le  \kappa_r  (\|R\|_{r,q_*} +\|\tilde{R}\|_{r,q_*} +|a|+|{\tilde{a}}|)|a-{\tilde{a}}|\\
+\kappa_r\ep^{r} \left( \|R\|_{r,q_*} + \| \tilde{R}\|_{r,q_*}+\ep^{-r} \left(|a|+|{\tilde{a}}|\right)  \right)\| R - \tilde{R}\|_{r,0}.
\end{multline}
\end{proposition}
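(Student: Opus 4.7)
The plan is to prove Proposition \ref{work it} by directly composing the linear solvability estimates of Proposition \ref{linear mover} with the nonlinearity estimates previously established for the four source terms $\J_0, \J_1, \J_2, \J_3$, namely Lemma \ref{J0 lemma}, \eqref{J1 estimates}, Lemma \ref{J2 lemma}, and Lemma \ref{J3 lemma}. Since $\N_\ep^1(R,a) = \Rc_\ep(\J_0+\J_1+\J_2+\J_3)$ and $\N_\ep^2(R,a) = \A_\ep(\J_0+\J_1+\J_2+\J_3)$, all estimates reduce to bounding the sum $\J_0+\J_1+\J_2+\J_3$ in an appropriate weighted Sobolev norm and then invoking the operator bounds for $\Rc_\ep$ and $\A_\ep$. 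I will fix $q_* \in (0,q_0]$ small enough that all five cited results apply simultaneously.

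For the size estimates \eqref{N1 map}--\eqref{N2 map}, the trick is the choice of regularity index. To prove \eqref{N1 map} I would apply the bound $\|\Rc_\ep G\|_{r,q_*} \le C_{r,q_*}\|G\|_{r-1/2,q_*}$ (which comes from Proposition \ref{linear mover} with index $r-1/2$) and then evaluate each $\J_i$ at regularity $r-1/2$: this produces an $\ep^{-r+1/2}$ prefactor in the $\J_2$ and $\J_3$ contributions, an $\ep^2$ from $\J_0$, and a quadratic term from $\J_1$, matching \eqref{N1 map} exactly. To prove \eqref{N2 map} I would instead apply $\|\A_\ep G\| \le C_{r,q_*}\ep^r\|G\|_{r,q_*}$ at the full regularity $r$; here the $\ep^r$ prefactor coming from $\A_\ep$ cancels the $\ep^{-r}$ blow-up in the $\J_2$ and $\J_3$ bounds, turning them into $a^2$ and $|a|\|R\|_{r,q_*}$, while $\J_0$ and $\J_1$ contribute $\ep^{r+2}$ and $\ep^r\|R\|_{r,q_*}^2$.

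For the Lipschitz estimates \eqref{N1 lip}--\eqref{N2 lip}, I would repeat the same composition, but now using the Lipschitz clauses of \eqref{J1 estimates}, \eqref{J2 estimates}, and \eqref{J3 estimates}, together with the observation that the bounds on $\Rc_\ep$ and $\A_\ep$ in Proposition \ref{linear mover} remain valid at decay rate $q = 0$ (where they reduce to standard unweighted Sobolev estimates, since the proof goes through unchanged in this limit). The delicate point is the mismatch of decay rates in the Lipschitz estimate for $\J_3$: the difference is controlled in $H^r_0$ but the factors $\|R\|_{r,q_*}, \|\tilde R\|_{r,q_*}$ appearing on the right are measured in the weighted norm. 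This mismatch propagates through $\Rc_\ep$ and $\A_\ep$ (using the $E^r_0 \to E^{r+1/2}_0$ boundedness for $\N_\ep^1$ and the $\ep^r$ gain from $\A_\ep$ for $\N_\ep^2$) and produces precisely the hybrid form of \eqref{N1 lip} and \eqref{N2 lip}, in which some factors are measured in $\|\cdot\|_{r,q_*}$ and others in $\|\cdot\|_{r,0}$.

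The proof is essentially a bookkeeping exercise and I do not expect any genuine obstacle; the only point that requires care is the accurate tracking of the $\ep^{-r}$ losses arising from $\J_2$ and $\J_3$ in Lemmas \ref{J2 lemma} and \ref{J3 lemma}, and their cancellation (or not) against the $\ep^r$ gain of $\A_\ep$ versus its absence in $\Rc_\ep$. Once $q_*$ is fixed small enough and the arithmetic of the regularity indices is laid out cleanly, the constants $\kappa_r$ in the four displayed inequalities arise directly from the constants in Proposition \ref{linear mover} and the four $\J_i$ lemmas, uniformly in $\ep \in (0,\ep_0]$ after possibly shrinking $\ep_0$ one last time.
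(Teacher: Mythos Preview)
Your approach is exactly the paper's: the paper states that Proposition \ref{work it} follows ``in a direct way'' from Proposition \ref{linear mover} together with \eqref{J0 estimate}, \eqref{J1 estimates}, \eqref{J2 estimates}, and \eqref{J3 estimates}, and supplies no further detail, so your plan to compose the operator bounds with the $\J_i$-estimates (using regularity index $r-1/2$ for $\Rc_\ep$ in \eqref{N1 map} and index $r$ for $\A_\ep$ in \eqref{N2 map}) is precisely what is intended. One small caveat: your assertion that the bounds on $\Rc_\ep$ and $\A_\ep$ ``remain valid at $q=0$ since the proof goes through unchanged'' is too quick, because Proposition \ref{linear mover} is stated only for $q\in(0,q_0]$ and the constants in Lemma \ref{suff} and in the Riemann--Lebesgue estimate \eqref{RL} genuinely diverge as $q\to 0^+$; the paper glosses over this same point, so your sketch is no less complete than the paper's own treatment.
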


Equipped with the above estimates, we construct our small solution to \eqref{jaws 2} by a classical iterative argument. 
To begin, let $R_0:=0$, $a_0 :=0$ and for $n\ge0$ define
\be\label{the scheme}
R_{n+1}:=\N_\ep^1(R_{n},a_{n}) \mand a_{n+1}:=\N_\ep^2(R_{n},a_{n}).
\ee
We first claim that for each fixed $r\geq1$ there exists $\ep_{r} > 0$ such that
\be\label{ind hyp}
\| R_{n}\|_{r,q_*} \le 2 \kappa_r \ep^2 \mand  |a_n| \le 2 \kappa_r \ep^{r+2}.
\ee
for all $\ep \in (0,\ep_{r}]$.
Here $\kappa_r>0$ is as in Proposition \ref{work it}.
The proof is by induction and the base case is obvious. 
 For the inductive step, if we assume \eqref{ind hyp} then \eqref{the scheme} implies by way of the estimates \eqref{N1 map} and \eqref{N2 map} that
\[
\| R_{n+1}\|_{r,q_*} \le \kappa_{r,q_*}\left(\ep^2 +
\ep^{-r+1/2} (2 \kappa_{r,q_*} \ep^{r+2})^2 + \ep^{-r+1/2} |2 \kappa_{r,q_*} \ep^{r+2}|2 \kappa_{r,q_*} \ep^2 + (2 \kappa_{r,q_*} \ep^2)^2 \right)
\]
and
\[
|a_{n+1}| 
\le \kappa_{r,q_*}
\left(
\ep^{r+2} +
(2 \kappa_{r,q_*} \ep^{r+2})^2 +  |2 \kappa_{r,q_*} \ep^{r+2}| 2 \kappa_{r,q_*} \ep^2+ \ep^r (2 \kappa_{r,q_*} \ep^2)^2 
\right).
\]
which, after tidying up and requiring $0 < \ep <1$, gives the estimates
\[
\| R_{n+1}\|_{r,q_*} \le \kappa_r\ep^2 \left(1 
+12 \kappa^2_{r}\ep^2 \right)
\mand
|a_{n+1}| 
\le \kappa_r \ep^{r+2}
\left(1
+ 12 \kappa_r^2 \ep^{2}
\right).
\]
By setting 
\be\label{ep condition}
\ep_{r} := {1\over \sqrt{12} \kappa_r},
\ee
then we have attained our goal of showing \eqref{ind hyp} for all $n$ when $\ep \in (0,\ep_r]$, with $\eps_r$ defined as in \eqref{ep condition}.

Thus, for $\ep$ small enough we have that
$
\left\{(R_n,a_n) \right\}_{n \ge 0}
$
a bounded sequence in $E^r_{q_*} \times \R$. 
We now demonstrate that this sequence  converges strongly in the space $E^r_0 \times \R$.  Using \eqref{the scheme}
together with  the estimates \eqref{N1 lip} and \eqref{N2 lip} we find directly that
\begin{equation*}
\begin{split}
\| R_{n+1} - R_n\|_{r,0}=&\|\N_\ep^1(R_n,a_n)-\N_\ep^1(R_{n-1},a_{n-1})\|_{r,0}\\  \le
&\kappa_r \ep^{-r} (\|R_n\|_{r,q_*}+\|R_{n-1}\|_{r,q_*}+|a_n|+|a_{n-1}|)|a_n-a_{n-1}|\\
&+\kappa_r\left( \|R_n\|_{r,0} + \| R_{n-1}\|_{r,0} + \ep^{-r}\left(|a_n|+|a_{n-1}|\right) \right)\| R_n - R_{n-1}\|_{r,0}\\
 \end{split}
\end{equation*}
and
\begin{equation*}
\begin{split}
|a_{n+1} - a_n|= &|\N_\ep^2(R_{n},a_{n})-\N_\ep^2(R_{n-1},a_{n-1})| \\ \le &
 \kappa_r  (\|R_n\|_{r,q_*} +\|R_{n-1}\|_{r,q_*} +|a_n|+|a_{n-1}|)|a_n-a_{n-1}|\\
&+\kappa_r\ep^{r} \left( \|R_n\|_{r,q_*} + \| R_{n-1}\|_{r,q_*}+\ep^{-r} \left(|a_n|+|a_{n-1}|\right)  \right)\| R_n - R_{n-1}\|_{r,0}.
\end{split}
\end{equation*}
Using \eqref{ind hyp} it follows from the above estimates that 
\begin{equation*}
\begin{split}
\| R_{n+1} - R_n\|_{r,0}\le
&8 \kappa^2_{r} \ep^2\left( \ep^{-r}   |a_n-a_{n-1}|+\| R_n - R_{n-1}\|_{r,0}
  \right)\\
 \end{split}
\end{equation*}
and
\begin{equation*}
\begin{split}
|a_{n+1} - a_n| \le &
8 \kappa^2_{r}\ep^2 \left( |a_n-a_{n-1}|+ \ep^{r}\| R_n - R_{n-1}\|_{r,0}\right),
\end{split}
\end{equation*}
valid for all $n\geq 1$ and $\eps\in(0,\eps_r]$.  Taken together, it follows that
\be\label{final estimate}
\| R_{n+1} - R_n\|_{r,0} + \ep^{-r}|{a}_{n+1} - a_n|  \le 8 \kappa^2_{r}\ep^2 \left(\| R_n - R_{n-1}\|_{r,0}+  \ep^{-r} |{a}_n-{a}_{n-1}|\right),
\ee
which, in turn, gives
\bes
\| R_{n+1} - R_n\|_{r,0} + \ep^{-r}|{a}_{n+1} - a_n|  \le \left(8 \kappa^2_{r}\ep^2\right)^n \left(\| R_1 - R_{0}\|_{r,0}+  \ep^{-r} |{a}_1-{a}_{0}|\right).
\ees
Using the initial conditions $R_0 = 0$, $a_0 = 0$ along with  \eqref{ind hyp} converts this to
\bes
\| R_{n+1} - R_n\|_{r,0} + \ep^{-r}|{a}_{n+1} - a_n|  \le  4\kappa_r \ep^2 \left(8 \kappa^2_{r}\ep^2\right)^n. 
\ees
By the triangle inequality, it now follows that for $m,n\geq 1$ and $m>n$ we have
\[
\| R_{m} - R_n\|_{r,0} + \ep^{-r}|{a}_{m} - a_n|  \le \sum_{k=n}^{m-1} 4\kappa_r \ep^2 \left(8 \kappa^2_{r}\ep^2\right)^k.
\]
Note that since since $\ep \in  (0,\ep_{r}]$, with $\ep_{r}$ as in \eqref{ep condition}, we have that $8 \kappa^2_{r}\ep^2<1$ and hence, using a geometric series, we find
\[
\| R_{m} - R_n\|_{r,0} + \ep^{-r}|{a}_{m} - a_n|  \le {4\kappa_r \ep^{2} \left(8 \kappa^2_{r}\ep^2\right)^n \over 1 - 8 \kappa^2_{r}\ep^2} .
\]
Since the right hand side converges to zero as $n \to \infty$, it follows that $\left\{(R_n,a_n) \right\}_{n \ge 0}$ is a Cauchy sequence in $E^r_0 \times \R$.
As $E^r_0\times\RM$ is clearly a Hilbert space, it follows that exists $(R_\ep,a_\ep) \in E^r_0 \times \R$ such that
\be\label{converge 1}
(R_n,a_n) \underset{E^r_0 \times \R} {\longrightarrow}(R_\ep,a_\ep)  \quad \text{as $n \to \infty$}.
\ee

Our next goal is to show that, in fact, the function $R_\eps$ is exponentially localized.  To this end, let $q_*>0$ be fixed as in Proposition \ref{work it}
and note that \eqref{ind hyp} implies that $\{R_n\}_{n\geq 0}$ is a  bounded in $E^r_{q_*}$.   Since $E^r_{q_*}$ is a Hilbert space it follows
that we can extract a weakly convergent subsequence in $E^r_{q_*}$.  Denote this weak limit as $\tilde{R}_\ep \in E^r_{q_*}$ and note that 
since $E^r_{q_*} \subset E^r_0$, the same subsequence also converges weakly to  $\tilde{R}_\ep$ in the unweighted space $E^r_0$.  
However, since we have already shown that $R_n \to R_\ep$ strongly in $E^r_0$ we must
have $R_\ep = \tilde{R}_\ep$ by the uniqueness of weak limits which, in turn, immediately implies that $R_\ep \in E^r_{q_*}$, as claimed. 
Furthermore, since norms on Hilbert spaces are lower semi-continuous with respect to weak limits, we know from \eqref{ind hyp} that
\be\label{Rep est}
\| R_\ep \|_{r,q_*} \le 2 \kappa_r \ep^2 \mand  |a_\ep| \le 2 \kappa_r \ep^{r+2}.
\ee

The next step is to show that the pair $(R_\eps,a_\eps)\in E^r_q\times\RM$ is indeed a solution of the nonlinear system \eqref{jaws 2}, i.e. that
\be\label{victory}
R_\ep = \N_\ep^1(R_\ep,a_\ep)\mand a_\ep = \N_\ep^2(R_\ep,a_\ep).\ee 
Since we know each term in the sequence $\{ R_n \}_{n \ge 0}$ meets the bound in \eqref{ind hyp}, and since we know  $R_\ep$ 
satisfies the similar estimate \eqref{Rep est}, we can conclude from \eqref{N1 lip}, \eqref{N2 lip} and \eqref{converge 1} that
\[
(\N_\ep^1(R_n,a_n),\N_\ep^2(R_n,a_n))  \underset{E^r_0 \times \R} {\longrightarrow}(\N_\ep^1(R_\ep,a_\ep),\N_\ep^2(R_\ep,a_\ep)) \quad \text{as $n \to \infty$}.
\]
Consequently, we can pass to the limit in the iterative scheme \eqref{the scheme} to conclude \eqref{victory}.  This establishes the existence component of Theorem \ref{main theorem}.

It is also the case that $(R_\ep,a_\ep)$ is the unique solution of \eqref{jaws 2} which meets the estimates
in \eqref{Rep est}. Indeed, if there were another pair $(\tilde{R}_\ep,{\tilde{a}}_\ep)\in E^r_q\times\RM$ that satisfies both \eqref{jaws 2} and \eqref{Rep est}, then it is apparent that
\bes
\| R_{\ep} - \tilde{R}_\ep\|_{r,0} + \ep^{-r}|{a}_{\ep} - {\tilde{a}}_\ep| 
= \| \N_\ep^1(R_{\ep},a_\ep) - \N_\ep^1(\tilde{R}_\ep,{\tilde{a}}_\ep)\|_{r,0} + \ep^{-r}|\N_\ep^2(R_{\ep},a_\ep) - \N_\ep^2(\tilde{R}_\ep,{\tilde{a}}_\ep)|. 
\ees
Using the same argument that led from \eqref{N1 lip} and \eqref{N2 lip} to \eqref{final estimate}, we find that
\bes
\| R_{\ep} - \tilde{R}_\ep\|_{r,0} + \ep^{-r}|{a}_{\ep} - {\tilde{a}}_\ep| 
 \le 8 \kappa^2_{r}\ep^2 \left(\| R_\ep - \tilde{R}_{\ep}\|_{r,0}+  \ep^{-r} |{a}_\ep-{{\tilde{a}}}_{\ep}|\right).
\ees
which, since $8 \kappa^2_{r}\ep^2<1$, implies $(\tilde{R}_\ep,{\tilde{a}}_\ep)=(R_\ep,a_\ep)$, as claimed.

It remains to discuss the smoothness of the solutions $R_\eps$ constructed above.
The smoothing property imputed in \eqref{N1 map} implies, since our solutions are fixed points, that
our functions $R_\ep$ are $C^\infty$ by a routine bootstrap argument. 
But there is more to the story.
Note that we left the precise value of the regularity index $r$
unspecified above; there was no restriction on its size. 
The uniqueness property together with the containment $E^{r+1}_{q_*} \subset E^{r}_{q_*}$
implies   that the solution we construct at order $r$ coincides with the ones we construct at  
higher regularity, at least for $\ep$ small enough. This is another avenue for establishing
the smoothness of the solutions, but also more than that.
Note that the larger the regularity index $r$, the tighter the bound on $a_\ep$ is in \eqref{Rep est}. 
Thus we can conclude that  $|a_\ep| \le C_r \ep^r$ for all $r \ge 1$, {\it i.e.} that it is small beyond all algebraic orders of $\ep$.
By putting $P_\ep(x) = a_\ep \Phi_\ep^{a_\ep}(x/\ep)$ we have proven Theorem \ref{main theorem}.

\section{Discussion on Stability}\label{S:stability}

In this final section, we briefly consider the spectral stability of the generalized solitary waves constructed in Theorem \ref{main theorem}.  Specifically, we are interested
in the ability of these generalized solitary waves to persist when subject to small perturbations.   As we will see, a necessary condition for our small generalized solitary waves
to be stable is that their oscillatory endstates be stable.  This is made rigorous below by the Weyl essential spectrum theorem.  The stability of the small, oscillatory endstates
has recently been investigated in \cite{HJ15b}, allowing us to make observations about the stability of the patterns constructed in Theorem \ref{main theorem}.

To begin, fix $\eps$ sufficiently small and note that 
$w_\eps$ is an equilibrium solution of the evolution equation
\[
u_t+\partial_x\left(\mathcal{M}_\beta u - c_\eps u + u^2\right)=0.
\]
Linearizing about the equilibrium solution $w_\eps$ leads to the following linear evolution equation:
\begin{equation}\label{lin}
v_t+\partial_x\left(\mathcal{M}_\beta v - c_\eps v + 2w_\eps v\right)=0.
\end{equation}
Here we require $v(\cdot,t)\in L^2(\RM)$ for all $t\geq 0$.  The solution $w_\eps$ is said to be \emph{linearly stable} provided solutions
of \eqref{lin} that begin small remain small for all time.  

A first step in the study of linear stability is often to study the spectrum of the associated linear operator.
Here, this corresponds to studying the spectrum of
\[
L_\eps:=\partial_x\left(\mathcal{M}_\beta  - c_\eps + 2w_\eps \right),
\]
considered as a closed, densely defined linear operator on $L^2(\RM)$ with domain $H^{3/2}(\RM)$.  The wave $w_\eps$ is said to be \emph{spectrally stable} provided the spectrum $\sigma(L_\eps)$
does not intersect the open right half plane, i.e. provided
\[
\sigma(L_\eps)\cap\left\{z\in\mathbb{C}:\Re(z)>0\right\}=\emptyset.
\]
Note that since the coefficients of $L_\eps$ are real-valued, the set $\sigma(L_\eps)$ is symmetric about the real axis.
Furthermore, since $L_\eps$ is the composition of a skew-adjoint and a self-adjoint operator, the spectrum of $L_\eps$ is invariant with respect to reflection through the origin.  Together, this implies the 
following:
\[
\lambda\in\sigma(L_\eps)\quad\Rightarrow\quad \pm\lambda,~\pm\bar\lambda\in\sigma(L_\eps).
\]
It follows that the pattern $w_\eps$ is spectrally stable if and only if $\sigma(L_\eps)\subset\RM i$.

To study the spectrum of $L$, note that from Theorem \ref{main theorem} the wave $w_\eps$ can be decomposed into an exponentially localized ``core" $\Psi_\eps$ and
the oscillatory ``ripple" $P_\eps$:
\[
w_\eps(x)=\Psi_\eps(x)+P_\eps(x).
\]
This decomposition motivates our main observation regarding the spectral analysis of $L_\ep$, which is the content of the following:

\begin{lemma}
When considered as operators on $L^2(\RM)$ with domains $H^{3/2}(\RM)$, the operator $L_\eps$ is a relatively compact perturbation of the asymptotic
operator
\[
\widetilde{L}_\eps:=\partial_x\left(\mathcal{M}_\beta-c+2P_\eps\right).
\]
\end{lemma}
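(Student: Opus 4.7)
The plan is to compute $L_\ep-\widetilde{L}_\ep$ explicitly, observe that it is multiplication by an exponentially localized function composed with differentiation, and then establish compactness by a standard tightness-plus-local-Rellich argument.

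First I would decompose $w_\ep=\Psi_\ep+P_\ep$ as in the statement, where by Theorem \ref{main theorem} the core
\bes
\Psi_\ep(x)=\LC\frac{1-3\beta}{4}\RC\ep^2\sech^2\LC\frac{\ep x}{2}\RC+R_\ep(\ep x)
\ees
satisfies $\|\cosh^{q_*}(\cdot)\Psi_\ep^{(r)}(\cdot)\|_{L^\infty(\R)}\le C_r\ep^2$ for every $r\ge 0$ and some $q_*>0$. Because any additive scalar in the middle factor disappears under the outer $\partial_x$, a direct computation yields
\bes
L_\ep-\widetilde{L}_\ep=2\partial_x\bigl(\Psi_\ep\,\cdot\,\bigr).
\ees
Next I would note that $\partial_x\M_\beta$ has symbol $ik\,m_\beta(k)$, whose modulus is comparable to $(1+|k|^3)^{1/2}$; consequently $\partial_x\M_\beta$, and hence both $L_\ep$ and $\widetilde{L}_\ep$, are bounded maps $H^{3/2}(\R)\to L^2(\R)$ whose graph norms on $H^{3/2}(\R)$ are equivalent to $\|\cdot\|_{H^{3/2}}$ (the multiplicative $L^\infty$ terms $w_\ep,\,P_\ep$ are relatively bounded with relative bound zero). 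It therefore suffices to show that
\bes
T_\ep:H^{3/2}(\R)\to L^2(\R),\qquad T_\ep v:=2\Psi_\ep'\,v+2\Psi_\ep\,v'
\ees
is a compact operator.

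For compactness, let $\{v_n\}$ be bounded in $H^{3/2}(\R)$. I would combine two ingredients. For \emph{tightness}, the exponential localization of $\Psi_\ep$ and $\Psi_\ep'$ gives, for $j=0,1$ and all $M>0$, $\|\Psi_\ep^{(j)}\|_{L^\infty(|x|>M)}\le Ce^{-q_*M}$, so
\bes
\|T_\ep v_n\|_{L^2(|x|>M)}\le Ce^{-q_*M}\bigl(\|v_n\|_{L^2}+\|v_n'\|_{L^2}\bigr)\longrightarrow 0
\ees
uniformly in $n$ as $M\to\infty$. For \emph{local compactness}, the $H^{3/2}$ bound on $\{v_n\}$ yields uniform $H^1$ bounds on $\{v_n\}$ and uniform $H^{1/2}$ bounds on $\{v_n'\}$; Rellich--Kondrachov then gives compact embeddings of each of these into $L^2$ on any bounded interval, and a diagonal extraction over an exhausting sequence of balls produces a subsequence $v_{n_k}$ with both $v_{n_k}$ and $v_{n_k}'$ convergent in $L^2_{\mathrm{loc}}(\R)$. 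Multiplication by the bounded functions $\Psi_\ep',\Psi_\ep$ preserves $L^2_{\mathrm{loc}}$-convergence, and combining this local convergence with the uniform tail bound shows $\{T_\ep v_{n_k}\}$ is Cauchy in $L^2(\R)$. Hence $T_\ep$ is compact and $L_\ep-\widetilde{L}_\ep$ is relatively compact with respect to $\widetilde{L}_\ep$.

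The only real bookkeeping issue I anticipate is handling the derivative in the product rule: to apply Rellich to the $\Psi_\ep v_n'$ term one needs $v_n'$ bounded in a Sobolev space of positive order (so that it is compact into $L^2_{\mathrm{loc}}$), and this is exactly why the full $H^{3/2}$ domain of $\widetilde{L}_\ep$ must be used rather than, say, $H^1$. Once this is in place the argument is essentially a textbook pairing of the exponential decay supplied by Theorem \ref{main theorem} with the Rellich theorem on bounded intervals.
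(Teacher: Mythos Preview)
Your argument is correct and follows essentially the same route as the paper: compute $L_\ep-\widetilde{L}_\ep=2\partial_x(\Psi_\ep\,\cdot)$, then combine the exponential localization of $\Psi_\ep$ with Rellich--Kondrachov on bounded intervals to extract a convergent subsequence in $L^2(\R)$. One small bookkeeping slip: since $\Psi_\ep$ is built from $\sech^2(\ep x/2)$ and $R_\ep(\ep x)$, the decay rate in the unscaled variable $x$ is of order $\ep q_*$ rather than $q_*$, so your tail estimate should read $Ce^{-c\ep M}$; this does not affect the compactness argument because $\ep$ is fixed.
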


\begin{proof}
Observe that the difference 
\[
\mathcal{D}_\eps:=L_\eps-\widetilde{L}_\eps=2\partial_x\left(\Psi_\eps~\cdot\right)
\]
defines a closed operator  on $L^2(\RM)$ with domain $H^{1}(\RM)$.  Using that $\Psi_\eps$ and its derivatives decay exponentially fast at spatial infinity, the continuity
of $\mathcal{D}_\eps$ as a map from $H^{3/2}(\RM)$ to $H^{1/2}(\RM)$, and the compactness of the embedding $H^{1/2}(I)$ into $L^2(I)$ for any compact interval $I\subset\RM$ 
one can show that if $\{f_n\}_{n=1}^\infty$ is a bounded sequence in $H^{3/2}(\RM)$, then the sequence $\{\mathcal{D}_\eps(f_n)\}_{n=1}^\infty\subset L^2(\RM)$ 
has a convergent subsequence.    The claim now follows.
\end{proof}

Since the essential spectrum is stable with respect to relatively compact perturbations,  it follows that the essential spectrum of $L_\eps$ acting on $L^2(\RM)$ agrees with the essential spectrum
of the periodic-coefficient linear operator $\widetilde{L}_\eps$ acting on $L^2(\RM)$.  Consequently, we can conclude the generalized solitary wave $w_\eps$ is spectrally
unstable if its small amplitude, periodic oscillations are spectrally unstable.  We now study the spectral stability of these oscillations.

As the operator $\widetilde{L}_\eps$ has periodic coefficients its spectrum can be studied via Floquet-Bloch theory, from which it can be easily shown that
non-trivial solutions of $\widetilde{L}_{\eps}v=\lambda v$
can not be integrable over $\RM$:  at best, they can be bounded over $\RM$, and hence the spectrum of $\widetilde{L}_\eps$ over $L^2(\RM)$ is
purely essential.  In particular, it can be shown that $\sigma(\widetilde{L}_{\eps})$ consists of a countable number of continuous curves in $\mathbb{C}$: see \cite{KP_book} for
details.  Using Floquet-Bloch theory, it can be shown that in a sufficiently small neighborhood of the origin $\lambda=0$, the set $\sigma(\widetilde{L}_{\eps})$ consists
of three curves, all of which pass through the origin: see \cite{HJ15b}.  If all three of these curves are confined to the imaginary axis, we say that the background
periodic wave $P_\eps$ is \emph{modulationally stable}, while it is  \emph{modulationally unstable} otherwise.

The modulational stability of the small amplitude periodic traveling wave solutions $P_\eps$ was recently studied in \cite{HJ15b}.  There, the authors use 
rigorous spectral perturbation theory to establish the following result.

\begin{theorem}[Modulational Stability Index]\label{t:mat_vera}
Fix $\beta\in(0,1/3)$ and $\eps>0$ sufficiently small.  Then the $2\pi/{k_\eps^*}$- periodic traveling wave $P_\eps$ is modulationally unstable if $\Delta_{\rm MI}(k_\eps^*)<0$, where
\[
\Delta_{\rm MI}(z):=\frac{(zm_{\beta}(z))''\left((zm_\beta(z))'-m_\beta(0)\right)}{m_\beta(z)-m_\beta(2z)}~~\Delta_{\rm BF}(z),
\]
and $\Delta_{\rm BF}(z):=2(m_\beta(z)-m_\beta(2z))+((zm_\beta(z))'-m(0))$.  Furthermore, $P_\eps$ is modulationally stable if $\Delta_{MI}(k_\eps^*)>0$.
\end{theorem}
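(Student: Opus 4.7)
The plan is to study $\sigma(\widetilde{L}_\eps)$ via Floquet-Bloch theory and reduce modulational stability to the leading-order behavior of the three eigenvalue curves of $\widetilde{L}_\eps$ that pass through the origin at $\eps=0$. Since $P_\eps$ is $(2\pi/k_\eps^*)$-periodic, Floquet-Bloch decomposition gives
\[
\sigma_{L^2(\R)}(\widetilde{L}_\eps) = \bigcup_{\mu \in [-1/2,1/2)} \sigma_{L^2_{\per}}\bigl(\widetilde{L}_{\eps,\mu}\bigr), \qquad \widetilde{L}_{\eps,\mu} := e^{-i\mu k_\eps^* x}\,\widetilde{L}_\eps\, e^{i\mu k_\eps^* x},
\]
where each Bloch operator $\widetilde{L}_{\eps,\mu}$ acts on $(2\pi/k_\eps^*)$-periodic functions, has compact resolvent, and depends analytically on $(\mu,\eps)$ near $(0,0)$. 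At $(\mu,\eps)=(0,0)$ the operator reduces to $\partial_x(\M_\beta - m_\beta(0))$, whose Fourier symbol $ik(m_\beta(k)-m_\beta(0))$ vanishes on the lattice $\{n k_\eps^*\}_{n\in\Z}$ only at $k \in \{0, \pm k_\eps^*\}$ (granted the non-resonance $m_\beta(n k_\eps^*) \neq m_\beta(0)$ for $|n|\geq 2$, which is built into the weak-surface-tension regime). Thus the generalized kernel is three-dimensional; these three modes will correspond, once $\eps$ is switched on, to the infinitesimal generators of spatial translation, mass/mean-level shift, and wave-speed variation inherited from the family $\{P_\eps\}$.

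Kato's analytic perturbation theory then furnishes three analytic eigenvalue branches $\lambda_j(\mu,\eps)$, $j = 1,2,3$, with $\lambda_j(0,0) = 0$, and reduces the spectral problem near the origin to computing the eigenvalues of a $3\times 3$ matrix $B(\mu,\eps)$ obtained by projecting $\widetilde{L}_{\eps,\mu}$ onto this three-dimensional generalized kernel. The entries of $B$ can be computed order-by-order in $\mu$ and the wave amplitude using the Stokes-type expansion of $P_\eps$: writing $P_\eps = \sum_n A_n e^{i n k_\eps^* x}$ and substituting into the profile equation $(\M_\beta - c_\eps)P_\eps + P_\eps^2 = 0$, the harmonics $A_n$ for $|n|\geq 2$ are determined by inverting $(\M_\beta - c_\eps)$ against quadratic self-interactions. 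The second-harmonic inversion introduces the nonresonance prefactor $(m_\beta(k_\eps^*) - m_\beta(2k_\eps^*))^{-1}$ in the expansion of $P_\eps$, which then propagates into $B(\mu,\eps)$. The characteristic factors $(zm_\beta(z))'$ and $(zm_\beta(z))''$ arise from Taylor expanding the Bloch symbol $i(k+\mu k_\eps^*) m_\beta(k+\mu k_\eps^*)$ in $\mu$ at the carrier frequency $k = k_\eps^*$, producing group velocity and second-order dispersion, respectively.

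By the Hamiltonian symmetry $\sigma(L_\eps) = -\sigma(L_\eps)$ together with the complex-conjugate symmetry $\sigma(L_\eps) = \overline{\sigma(L_\eps)}$, the characteristic polynomial of $B(\mu,\eps)$ factors as a real eigenvalue (pinned to $i\R$ by translation invariance) times a quadratic with real coefficients whose discriminant, to leading order in $(\mu,\eps)$, equals a positive multiple of $\Delta_{\rm MI}(k_\eps^*)$. The theorem then follows: negative discriminant forces the conjugate pair off $i\R$ (modulational instability), positive discriminant confines it to $i\R$ (modulational stability). The main obstacle is organizing the perturbative expansion of $B(\mu,\eps)$ so that the specific combinations $(zm_\beta(z))' - m_\beta(0)$ (group-minus-long-wave speed) and $\Delta_{\rm BF}(z) = 2(m_\beta(z)-m_\beta(2z)) + ((zm_\beta(z))'-m_\beta(0))$ emerge explicitly from the algebra; this requires careful bookkeeping of second-harmonic and mean-flow corrections at second order in the amplitude, together with repeated use of the profile equation for $P_\eps$ to close the system and isolate the precise combinations of $m_\beta$, $m_\beta'$, and $m_\beta''$ evaluated at $k_\eps^*$ and $2k_\eps^*$ that appear in $\Delta_{\rm MI}$.
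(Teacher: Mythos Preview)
The paper does not prove this theorem; it is quoted from \cite{HJ15b}, where the authors ``use rigorous spectral perturbation theory'' to derive the index $\Delta_{\rm MI}$. Your outline---Floquet--Bloch reduction, identification of a three-dimensional critical subspace at zero amplitude and zero Floquet parameter, Kato reduction to a $3\times 3$ matrix whose entries are computed via the Stokes expansion of the periodic wave, and a discriminant analysis of the resulting characteristic polynomial---is consistent with that methodology and with the standard framework for modulational indices of this type.

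Two cautions about the sketch as written. First, the small parameter governing the perturbation expansion in \cite{HJ15b} is the \emph{amplitude} of the periodic wave, not the long-wave parameter $\eps$ of the present paper; your expansion in $(\mu,\eps)$ conflates these roles. For the specific wave $P_\eps$ the amplitude $a_\eps$ does vanish with $\eps$, so the conclusion survives, but the bookkeeping should be in $(\mu,a)$. Second, the assertion that the characteristic polynomial of $B$ ``factors as a real eigenvalue (pinned to $i\R$ by translation invariance) times a quadratic'' is not accurate: translation invariance forces a zero eigenvalue only at $\mu=0$, not a purely imaginary branch for all $\mu$. The actual argument analyzes the full cubic after suitable rescaling; the Hamiltonian and reality symmetries constrain its coefficients, and the sign of its discriminant---which organizes into $\Delta_{\rm MI}(k_\eps^*)$ at leading order---determines whether all three roots lie on $i\R$ or whether a conjugate pair leaves the axis.
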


It follows from Theorem \ref{t:mat_vera} that there are four mechanisms which can cause a change in the sign of the modulational instability index $\Delta_{\rm MI}$, hence
signaling a change in the modulational stability of the wave $P_\eps$:
\begin{itemize}
\item[(1)] the group velocity $c_g(k):=(km_\beta(k))'$ attains an extremum at some wave number $k$, i.e. $c_g'(k)=0$;
\item[(2)] the group velocity coincides with the phase velocity $c_p(k):=m_\beta(k)$ of the limiting long wave at $k=0$, resulting in a resonance
between long and short waves, i.e. $(km_\beta(k))'=m(0)$;
\item[(3)] the phase velocities of the fundamental mode and the second harmonic coincide, i.e. $m(k)=m(2k)$;
\item[(4)] $\Delta_{BF}(k)=0$.
\end{itemize}
It is interesting to note that possibilities (1)-(3) are purely linear, not depending on any nonlinear effects.  
Note since the waves $P_\eps$ are necessarily supercritical, the third possibility above can never occur.	Furthermore, the formula
for $\Delta_{\rm MI}$ is completely explicit in terms of the phase speed $m_\beta$, and hence can be analyzed numerically.

\begin{figure}[t]
\begin{center}
\includegraphics[scale=0.6]{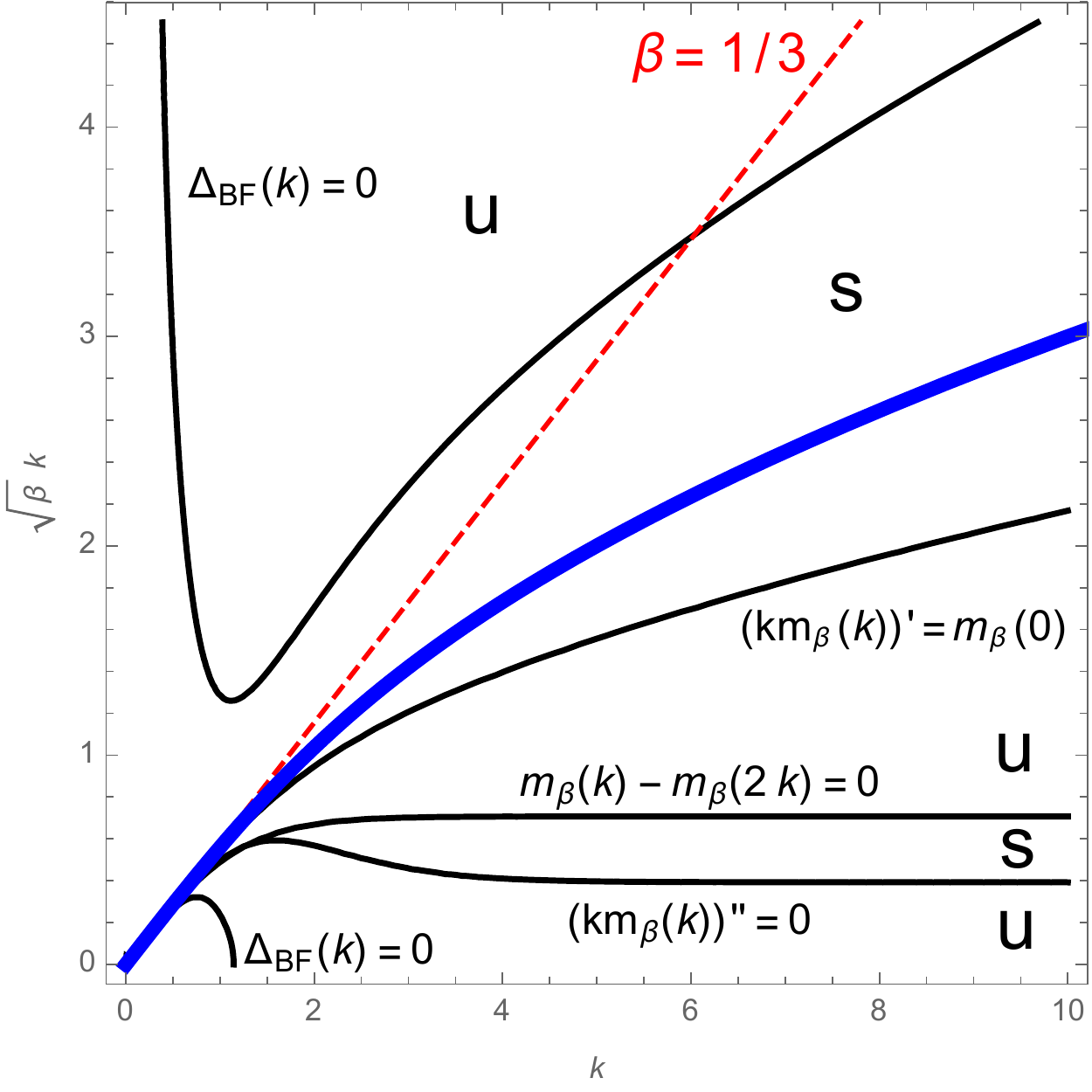}
\caption{The modulational stability diagram, written in coordinates $k$ versus $k\sqrt{\beta}$-plane, for the small amplitude, periodic traveling waves
of \eqref{dimW}.  To interpret, for fixed $\beta>0$ one must envision a line through the origin with slope $\sqrt{\beta}$.  Here, ``$S$" and ``$U$" denote regions
of modulational stability and instability, respectively.  The dark blue curve represents the function $\beta\to k_{\beta,c_0}$, corresponding to the frequencies
of the oscillatory (linear) wave $P_0$.}\label{f:MI}
\end{center}
\end{figure}

According to Theorem \ref{main theorem}, the frequency of the oscillatory wave $P_\eps$ is given by $k_\eps^*\approx k_{\beta,c_\eps}+\mathcal{O}(\eps)$.
Consequently, at first pass we can gain insight into the modulational stability of $P_\eps$ by calculating $\Delta_{\rm MI}(k_{\beta,c_0})$.  Performing such a numerical
calculation indicates that $\beta\mapsto\Delta_{\rm MI}(k_{\beta,c_0})>0$ for all $\tau\in(0,1/3)$: see Figure \ref{f:MI}.

From the above numerical observations, it seems likely that the oscillatory, asymptotic end states $P_\eps$ of the generalized solitary waves $w_\eps$
are modulationally stable for all $\tau\in(0,1/3)$ and $\eps>0$ sufficiently small.  We note that this serves as evidence that the waves
$w_\eps$ could be spectrally stable to localized perturbations in $L^2(\RM)$.  Of course, one must additionally study the spectral stability of the
oscillatory tail $P_\eps$ away from the origin in order to have a complete picture of the essential spectrum of $w_\eps$.  Provided that this analysis
indicates stability, it then remains to understand the effect of the localized core $\Psi_\eps$ on the spectral properties of the linearized operator.  We consider
these as \emph{very} interesting questions that will hopefully be studied elsewhere.

\appendix

\section{Proofs of Technical Estimates}
In this Appendix, we prove a number of technical lemmas used throughout the paper.  To prove Lemma \ref{J0 lemma}, we need the following general result:

\begin{lemma}\label{trunc lemma}
Suppose that $h(Z)$ is a complex valued function with the following properties:
\begin{enumerate}[(i)]
\item $h(Z)$ is analytic on the closed strip $\overline{\Sigma}_q = \left\{ |\Im Z | \le q\right\} \subset \C$ where $q > 0$;
\item there exists $0 \le n \in \mathbb{Z}$ and $c_*>0$ such that $Z \in \overline{\Sigma}_q$ imply that $|h(Z)| \le c_* |z|^n$.
\end{enumerate}
Then the Fourier multiplier operator $\H$ with symbol $h$ is a bounded map from $H^{r+n}_q$ into $H^r_q$ and we have
\be\label{truncate}
\| \H f\|_{r,q} \le C c_*\| f^{(n)}\|_{r,q}.
\ee
The constant $C>0$ does not depend on $h$, $r$, $q$, $n$, or $f$.
\end{lemma}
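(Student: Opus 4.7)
The plan is to factor out the polynomial growth of $h$ to reduce to the bounded-symbol case, and then deploy a contour-shift argument on the Fourier side to convert the weighted $H^r_q$ bound into a Plancherel estimate on the real axis after conjugation by exponential weights.

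I would begin by writing $h(Z) = Z^n g(Z)$ with $g(Z) := h(Z)/Z^n$. The hypothesis $|h(Z)| \leq c_*|Z|^n$ forces $h$ to vanish to order at least $n$ at the origin (by iteratively examining Taylor coefficients), so $g$ extends holomorphically across $Z = 0$, and away from the origin the same bound gives $|g(Z)| \leq c_*$; by continuity this bound persists across $Z = 0$, yielding $\|g\|_{L^\infty(\overline{\Sigma}_q)} \leq c_*$. On the operator side this yields the identity $\H = (-i)^n \G \partial_x^n$, where $\G$ is the Fourier multiplier with symbol $g$. This reduces \eqref{truncate} to the bounded-symbol statement
\[
\|\G u\|_{r,q} \lesssim c_* \|u\|_{r,q}.
\]

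For this reduced statement, I would use the norm equivalence
\[
\|u\|_{r,q} \asymp \|e^{q\cdot} u\|_{H^r(\R)} + \|e^{-q\cdot} u\|_{H^r(\R)},
\]
which follows from the pointwise bound $\cosh^q(x) \asymp e^{q|x|}$ together with the product-rule estimate $|\partial^j \cosh^q| \lesssim \cosh^q$. The key observation driving the whole argument is that exponential weights in physical space correspond to imaginary shifts of the Fourier variable: for sufficiently nice $f$,
\[
\F[e^{\pm qx} f](k) = \hat f(k \pm iq),
\]
where the right-hand side is the holomorphic extension of $\hat f$ to $\overline{\Sigma}_q$, an extension that exists precisely because elements of $H^r_q$ decay like $e^{-q|\cdot|}$. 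Since $g$ is analytic on $\overline{\Sigma}_q$, an application of Cauchy's theorem then yields
\[
\F[e^{\pm qx} \G u](k) = g(k \pm iq)\,\F[e^{\pm qx} u](k),
\]
so $e^{\pm qx} \G u = \G_\pm(e^{\pm qx} u)$, where $\G_\pm$ is the Fourier multiplier with symbol $g(\cdot \pm iq)$. These shifted symbols are bounded in modulus by $c_*$ on the real axis, so by Plancherel $\|\G_\pm\|_{B(H^r)} \leq c_*$, and combining with the norm equivalence closes the estimate.

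The main technical obstacle is rigorously justifying the contour-shift identity for general $f \in H^r_q$. I would handle this by first verifying it for Schwartz $f$ via a direct application of Cauchy's theorem (rapid decrease in the strip kills the boundary terms in the rectangular contour), then extending by density using the uniform bounds obtained. A secondary bookkeeping matter is the dependence of constants in the norm equivalence and in the product-rule estimates on $r$ and $q$; however, since $|\partial^j \cosh^q|$ is controlled by a polynomial in $q$ times $\cosh^q$, the resulting constants remain uniformly bounded for $q$ in a bounded interval, which is exactly the regime in which the lemma is applied in the main body of the paper.
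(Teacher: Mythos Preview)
Your approach is correct and essentially the same as the paper's: both rely on the Paley--Wiener norm equivalence $\|f\|_{r,q} \asymp \left(\int (1+K^2)^r(|\hat f(K+iq)|^2 + |\hat f(K-iq)|^2)\,dK\right)^{1/2}$ (which is exactly your contour-shift identity $\F[e^{\pm qx}f](k)=\hat f(k\pm iq)$ repackaged) together with the symbol bound on the shifted lines. The only difference is organizational---you factor $h(Z)=Z^n g(Z)$ first and then estimate the bounded multiplier $\G$, whereas the paper carries the factor $|Z|^n$ through the integral and absorbs it directly as $|\widehat{f^{(n)}}(K\pm iq)|$; your remark about the $q$-dependence of the norm-equivalence constants is also a fair caveat.
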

\begin{proof}
One can show that 
$$
| f |_{r,q} := \sqrt{ \int_\R (1+|K|^2)^r \left( |\hat{f}(K+iq)|^2 + |\hat{f}(K-iq)|^2\right) dK}
$$
is equivalent to $\| f \|_{r,q}$ by means of the Paley-Wiener theorem (see, for instance, \cite{Beale80}). The constants in the equivalence
can be taken independent of both $r$ and $q$.
Then we have, using the equivalence and the estimate in part (ii):
\bes\begin{split}
\|\H f \|_{r,q} \le &C| \H f |_{r,q}\\ =&C
\sqrt{ \int_\R (1+|K|^2)^r \left( |h(K+iq) \hat{f}(K+iq)|^2 + |h(K+iq)\hat{f}(K-iq)|^2\right) dK}\\
\le & Cc_*\sqrt{ \int_\R (1+|K|^2)^r|K|^{2n} \left( |\hat{f}(K+iq)|^2 + |\hat{f}(K-iq)|^2\right) dK}\\
\le & Cc_*| f^{(n)}|_{r,q}\\
\le & C c_* \| f^{(n)}\|_{r,q}.
\end{split}
\ees
\end{proof}

Now we can prove Lemma \ref{J0 lemma}:

\begin{proof} (Lemma \ref{J0 lemma})
%
A straightforward Taylor's theorem argument shows that there exist $Q>0$ and $c_*>0$ such that
$$
|\Im_\beta(z)|\le Q \implies  \left| m_\beta(z) - 1 + \gamma_\beta z^2  \right| \le c_*|z|^4.
$$
This implies that
$$
|\Im_\beta(Z)|\le Q/\ep \implies  \ep^{-2}\left| m_\beta(\ep Z ) - 1 + \gamma_\beta \ep^2 Z^2  \right| \le c_*\ep^2 |Z|^4.
$$
Note that $\ep^{-2} ( \M_\beta^\ep-1-\gamma_\beta\ep^2 \partial_X^2)$ is a Fourier multiplier with symbol $\ep^{-2}(m_\beta(\ep Z) - 1 + \gamma_\beta \ep^2 Z^2)$ 
and so Lemma \ref{trunc lemma} and \eqref{J0 revisited} gives us
$$
\| \J_0\|_{r,q} \le c_* \ep^{2} \| \sigma_\beta'' \|_{r,q} 
$$
when $q \le Q/\ep$. So, for $\ep$ small enough, we find  for all $q\in[0,q_0]$ and $r\geq 0$ that
$$
\| \J_0\|_{r,q} \le C_r\ep^2.
$$

\end{proof}

Now we will prove Lemmas \ref{suff} and \ref{desing lemma}. Both invoke the following theorem of Beale, from \cite{Beal91}.
\begin{theorem} \label{beale1}
Suppose that $h(Z)$ is a complex valued function which has the following properties:
\begin{enumerate}[(i)]\item
$h(Z)$ is meromorphic on 
the closed strip $\overline{\Sigma}_q = \left\{ |\Im Z | \le q\right\} \subset \C$ where $q > 0$;
\item
there exists $m \ge 0$ and $c_*,\zeta_*>0$ such that $|Z|>\zeta_*$ and $Z \in \overline{\Sigma}_q$ imply
$|h(Z)|\le c_*/|\Re Z|^{m}$;
\item the set of singularities of $h(z)$ in $\overline{\Sigma}_q$ (which we denote $P_h$) is finite
and, moreover, is contained in the interior $\Sigma_q$;
\item all singularities of $h(Z)$ in $\overline{\Sigma}_q$ are simple poles.
\end{enumerate}
Let 
$$
U^r_{h,q}:= \left\{ f \in H^r_q : Z \in P_h  \implies  \hat{f}(Z) = 0\right\}.
$$
Then the Fourier multiplier operator $\H$ with symbol $h$ 
is a bounded injective map from $U^r_{h,q}$ into $H^{r+m}_q$. Additionally, for all $m'\in[0,m]$, 
we have the estimates:
\be\label{mu inv est}
\| \H f \|_{r+m',q} \le \sup_{K \in \R} \left \vert 
{(1+K^2)^{m'/2}  h(K\pm iq)}
\right \vert
 \| f\|_{r,q}.
\ee
\end{theorem}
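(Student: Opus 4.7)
The plan is to reduce the estimate to a direct pointwise bound on the Fourier side via a contour-shift/Paley-Wiener argument, in the same spirit as the proof of Lemma \ref{trunc lemma}. The first step is to invoke the Paley-Wiener characterization of the weighted space $H^r_q$: for any $f \in H^r_q$, the Fourier transform $\hat f$ extends holomorphically from $\R$ to the closed strip $\overline{\Sigma}_q$, with boundary values on the horizontal lines $\{\Im Z = \pm q\}$ in the weighted $L^2$ space of weight $(1+K^2)^{r/2}$, and
\[
|f|_{r,q}^2 := \int_\R (1+K^2)^r \left(|\hat f(K+iq)|^2 + |\hat f(K-iq)|^2\right) dK
\]
defines a norm equivalent to $\|f\|_{r,q}$ with constants independent of $r$ and $q$. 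The entire analysis is then carried out in terms of $|\cdot|_{r,q}$, absorbing the equivalence constants appropriately.

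The second step uses hypotheses (iii) and (iv) to make sense of $h \hat f$ on the strip. For $f \in U^r_{h,q}$ the holomorphic extension of $\hat f$ vanishes at every pole of $h$ in $\overline{\Sigma}_q$, and these poles lie in the open strip $\Sigma_q$ by (iii); since each pole is simple by (iv), the product $h \hat f$ has only removable singularities in $\Sigma_q$ and so extends to a function holomorphic on all of $\Sigma_q$ with well-defined boundary values $h(K \pm iq)\hat f(K \pm iq)$ on the lines $\Im Z = \pm q$. These boundary values are precisely the restrictions of the holomorphic extension of $\widehat{\H f}$, so $\H f \in H^{r+m'}_q$ can be recovered from them by the Paley-Wiener characterization in the reverse direction. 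Meanwhile, the decay estimate (ii) together with local analyticity on a neighborhood of the boundary lines implies that for each $m' \in [0,m]$ the quantity $M_{m'} := \sup_{K\in\R} |(1+K^2)^{m'/2} h(K \pm iq)|$ is finite.

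With these two ingredients the estimate \eqref{mu inv est} is immediate from a pointwise-to-$L^2$ bound on each boundary line:
\begin{align*}
|\H f|_{r+m',q}^2 &= \int_\R (1+K^2)^{r+m'} \left(|h(K+iq)\hat f(K+iq)|^2 + |h(K-iq)\hat f(K-iq)|^2\right) dK \\
&\le M_{m'}^2 \int_\R (1+K^2)^{r} \left(|\hat f(K+iq)|^2 + |\hat f(K-iq)|^2\right) dK = M_{m'}^2\,|f|_{r,q}^2.
\end{align*}
Translating back through the norm equivalence yields the stated bound. Injectivity is easy: if $\H f = 0$ with $f \in U^r_{h,q}$, then $h(K) \hat f(K) = 0$ for a.e.\ $K \in \R$, and since the real-line zeros and singularities of $h$ form a discrete set, continuity of $\hat f$ forces $\hat f \equiv 0$. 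The main obstacle is really only the removable-singularity step: one must verify carefully that the simple-pole hypothesis combined with the vanishing of $\hat f$ at those points is enough to make $h \hat f$ genuinely holomorphic across the strip, so that the $L^2$ bounds on the two horizontal boundary lines are actually bounds on the Fourier transform of $\H f$ in $H^{r+m'}_q$. Once that is in place, the rest is a clean Plancherel/Paley-Wiener exercise.
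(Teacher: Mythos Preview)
The paper does not supply its own proof of this theorem; it is quoted from Beale's article \cite{Beal91} and used as a black box in the proofs of Lemmas \ref{suff} and \ref{desing lemma}. Your argument is correct and is exactly the standard Paley--Wiener/contour-shift proof one gives for such a statement---indeed it parallels the paper's own proof of Lemma \ref{trunc lemma}, with the extra removable-singularity step to accommodate the simple poles. One cosmetic point: your final inequality is established for the equivalent norm $|\cdot|_{r,q}$, and converting back to $\|\cdot\|_{r,q}$ introduces a universal constant that the displayed estimate \eqref{mu inv est} does not carry; this is a harmless imprecision in the statement as quoted rather than a defect in your proof.
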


The main estimates we need to apply the above to prove
 Lemmas \ref{suff} and \ref{desing lemma} are contained in the following:
\begin{lemma}\label{pole analysis}
There exists $\ep_0>0$ and $q_0>0$ such for all $q \in (0,q_0]$ there exists $C_q>0$ for which $\ep \in (0,\ep_0]$ implies
\be\label{smoothing estimate}
\sup_{K \in \R} \left \vert 
{(1+K^2)^{1/4}  l^{-1}_\ep(K + iq)}
\right \vert \le C_q 
\ee
and
\be\label{desingularization estimate}
\sup_{K \in \R} \left \vert
 l^{-1}_\ep(K + iq)+\gamma_\beta^{-1}(1+(K+iq)^2)^{-1}
\right \vert \le C_q \ep.
\ee
\end{lemma}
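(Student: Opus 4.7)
The plan is to bound $l_\ep(K+iq)$ from below on the shifted line $\Im Z = q$ in several distinct regimes of $|K|$, then derive both estimates as corollaries. First I would record that $m_\beta(k)$ is real analytic on a neighborhood of the real axis (its nearest complex singularities come from the zeros of $\cosh$ at $k = \pm i\pi/2$), so for $q_0 > 0$ sufficiently small and $\ep$ sufficiently small, $l_\ep(Z)$ is analytic on the strip $\{|\Im Z| \le q_0\}$; I would also require $q_0 < 1$ so that $(1+Z^2)^{-1}$ is analytic on the same strip. Taylor's theorem applied to $m_\beta(k) = 1 - \gamma_\beta k^2 + O(k^4)$ at the origin then yields the crucial identity
\begin{equation*}
l_\ep(Z) = -\gamma_\beta(1+Z^2) + \ep^2 \rho_\ep(Z), \qquad |\rho_\ep(Z)| \le M|Z|^4,
\end{equation*}
valid for $|\ep Z| \le k_*$, with $k_*, M > 0$ depending only on $\beta$.

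Next I would split $\R$ into four regimes and obtain a lower bound for $|l_\ep(K+iq)|$ in each. In the \emph{bounded regime} $|K| \le R$, $l_\ep$ converges uniformly to $-\gamma_\beta(1+Z^2)$ as $\ep \to 0$, and the limit is bounded below on the strip since $q_0 < 1$. In the \emph{Taylor regime} $R < |K| \le c_*/\ep$ with $|K \mp K_\ep| > \delta$, choosing $c_*$ small enough that $Mc_*^2 < |\gamma_\beta|/2$ gives $|l_\ep(Z)| \ge c(1+|Z|^2)$ directly from the identity above. In the \emph{tail regime} $|K| > c_*/\ep$, the high-frequency asymptotic $m_\beta(k) \sim \sqrt{\beta|k|}$ (which persists on the strip) gives $|l_\ep(Z)| \gtrsim \ep^{-3/2}|K|^{1/2}$. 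Finally, the \emph{resonance regime} $|K \mp K_\ep| \le \delta$ is the most delicate: here I would Taylor expand $l_\ep$ at $K_\ep$ and use $l_\ep'(K_\ep) = \ep^{-1}m_\beta'(k_{\beta,c_\ep})$ together with the fact that, for $\beta \in (0,1/3)$, the point $k_{\beta,c_\ep}$ lies past the unique positive minimum of $m_\beta$, so $m_\beta'(k_{\beta,c_\ep}) \ge c > 0$ uniformly for small $\ep$. This yields $|l_\ep(K+iq)| \gtrsim \ep^{-1}(|K - K_\ep| + q)$ on a fixed neighborhood of $K_\ep$, and symmetrically at $-K_\ep$.

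The estimate \eqref{smoothing estimate} is now immediate in each regime; the worst case is the resonance regime, where $(1+K^2)^{1/4} \sim \ep^{-1/2}$ and $|l_\ep^{-1}| \le C\ep/q$, yielding the ratio $C_q\ep^{1/2}$. For \eqref{desingularization estimate} I would write
\begin{equation*}
l_\ep^{-1}(Z) + \gamma_\beta^{-1}(1+Z^2)^{-1} = \frac{\ep^2 \rho_\ep(Z)}{\gamma_\beta(1+Z^2)\,l_\ep(Z)}
\end{equation*}
in the first three regimes and bound each term individually in the tail regime. In the bounded and Taylor regimes away from the zeros the ratio is controlled by $C\ep^2|Z|^4/(1+|Z|^2)^2 \le C\ep^2$, and in the tail regime both $|l_\ep^{-1}|$ and $|(1+Z^2)^{-1}|$ are individually $O(\ep^2)$. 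The main obstacle, and the sole source of the factor $\ep$ (rather than $\ep^2$) in the statement, is the resonance regime: here $|\ep^2 \rho_\ep(Z)|$ is as large as $\ep^2 K_\ep^4 = O(\ep^{-2})$, while the denominator is bounded below only by $|\gamma_\beta(1+Z^2)|\cdot|l_\ep(Z)| \gtrsim K_\ep^2 \cdot (q/\ep) = O(q/\ep^3)$, so the ratio is at worst $O(\ep/q) = C_q\ep$, precisely matching the claim.
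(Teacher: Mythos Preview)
Your overall architecture---split into regions, Taylor-expand at $0$ and at the zero of $l_\ep$, invoke the $\sqrt{|k|}$ asymptotic at infinity---is the same as the paper's, and your computations in the bounded, Taylor, and resonance regimes are correct. There is, however, a genuine gap in the decomposition.

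Your ``tail regime'' is $|K|>c_*/\ep$ with $c_*$ chosen small (so that $Mc_*^2<|\gamma_\beta|/2$). But the high-frequency asymptotic $m_\beta(k)\sim\sqrt{\beta|k|}$ that you invoke there needs $|k|=|\ep K|$ large, not merely $|\ep K|>c_*$. Since $K_\ep=k_{\beta,c_\ep}/\ep$ with $k_{\beta,c_\ep}\to k_{\beta,1}$ a fixed positive number as $\ep\to 0$, the resonance point satisfies $\ep K_\ep=O(1)$ and so sits squarely inside your tail regime, where of course $l_\ep(K_\ep)=0$ and the claimed lower bound $|l_\ep(Z)|\gtrsim\ep^{-3/2}|K|^{1/2}$ is false. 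Your separate resonance window $|K-K_\ep|\le\delta$ (with $\delta$ fixed in the $K$ variable) corresponds to $|\ep K-k_\ep|\le\delta\ep$, an $\ep$-shrinking interval in the $k$ variable; it does not cover the intermediate range $c_*<|\ep K|<k_2$ (for $k_2$ large enough that the asymptotic is valid), which is of width $O(1/\ep)$ in $K$. For instance at $K=K_\ep+2\delta$ one has $|l_\ep^{-1}|\sim\ep$, not $\ep^2$, so your ``individually $O(\ep^2)$'' claim for the second estimate in the tail also fails there.

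The paper avoids this by working throughout in the rescaled variable $k=\ep K$ and using only three regions: $|k|\le k_1$ (Taylor at $0$), $k_1\le|k|\le k_2$ (Taylor of $m_\beta$ at $k_\ep$, covering the entire $O(1/\ep)$-wide middle band in $K$), and $|k|\ge k_2$ (the genuine asymptotic tail). Your resonance argument is essentially the paper's middle-region argument, but you need to run it on $|\ep K-k_\ep|\le\delta_0$ for a fixed $\delta_0$ in the $k$ variable---i.e.\ on $|K-K_\ep|\le\delta_0/\ep$---and then choose $k_1,k_2,\delta_0$ so that the three regions actually cover $\R$.
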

\begin{proof}
Define $k_\ep := \ep K_\ep  = k_{1,1+\gamma_\beta \ep^2}$.
By definition $m_\beta(k_\ep) = 1+\gamma_\beta \ep^2$.
Also put
$$
\eta_1(z):=m_\beta(z)-1 +\gamma_\beta z^2 \mand \eta_2(z) := m_\beta(z) - m_\beta(k_\ep)-m'_\beta(k_\ep)(z-k_\ep).
$$
These functions are just the remainders in Taylor expansions of $m_\beta(z)$ about $0$ and $k_\ep$ respectively.
We claim that there exists $b,\ep_0,C_1,C_2,C_3> 0$, $\rho \in (0,1)$ and $0<k_1<k_2$
 such that the following all hold when $\ep \in(0,\ep_0]$:
\be
\label{near zero}|\Re(z)| \le k_1\mand |\Im_\beta(z)| \le b \implies |\eta_1(z)| \le C_1 |z|^4 \le {\gamma_\beta \rho \over 2} |z|^2.
\ee
\be\label{near pole}
k_1\le|\Re(z)| \le k_2\mand |\Im_\beta(z)| \le b \implies |\eta_2(z)| \le C_2 |z-k_\ep|^2.
\ee
\be\label{near infinity}
|\Re(z)| \ge k_2\mand |\Im_\beta(z)| \le b \implies |m_\beta(z)-1 -\gamma_\beta\ep^2| \ge C_3 (1+|z|^2)^{1/4}.
\ee
\be\label{dumb equiv}
|\Im_\beta(Z)| \le {1 \over 2} \implies |1+Z^2| \ge \rho(1+|Z|^2).
\ee
Each of these can proved with Taylor's theorem and other differential calculus methods. So we omit the details.

{\it Estimates near $Z =0$:}
We begin by estimating
$$
B(Z):=\left \vert
l_\ep^{-1} (Z) + \gamma_\beta^{-1}(1+Z^2)^{-1}
\right \vert
$$
when $|\Re(Z)| \le k_1/\ep$ and  $|\Im_\beta(Z)| \le b/\ep$.
It is clear that
$$
B(Z)  = \left \vert
l_\ep(Z) + \gamma_\beta (1+Z^2) \over 
l_\ep(Z)  \gamma_\beta (1+Z^2) 
\right \vert.
$$
Recalling the definition of $l_\ep$ tells us
$$
B(Z)  = \left \vert
m_\beta(\ep Z) -1 - \gamma_\beta \ep^2+ \ep^2 \gamma_\beta (1+Z^2) \over 
(m_\beta(\ep Z) -1 - \gamma_\beta \ep^2)\gamma_\beta (1+Z^2) 
\right \vert.
$$
Then we use the definition of $\eta_1(z)$ from above to obtain
$$
B(Z)  = \left \vert
\eta_1(\ep Z)\over 
(\eta_1(\ep Z) - \gamma_\beta \ep^2 (1+Z^2))\gamma_\beta (1+Z^2) 
\right \vert.
$$

The reverse triangle inequality implies
$$
|\eta_1(\ep Z) - \gamma_\beta \ep^2 (1+Z^2)| \ge \gamma_\beta \ep^2 |1+Z^2| - |\eta_1(\ep Z)|.
$$
Then  \eqref{near zero} and \eqref{dumb equiv}  imply 
$$
|\eta_1(\ep Z) - \gamma_\beta \ep^2 (1+Z^2)| \ge {1 \over 2} \gamma_\beta \ep^2 \rho \left(1+ |Z|^2\right).
$$
Note that to use \eqref{dumb equiv} we need $\ep$ such that $|\Im Z| \le 1/2$. Since all the other estimates
hold for $|\Im Z| \le b/\ep$, this is no major constraint.

With this, \eqref{near zero} and \eqref{dumb equiv} we see
\bes
B(Z)  \le { 2 C_1 \ep^2 |Z|^4 \over \gamma_\beta^2  \rho^2 \left(1+ |Z|^2\right)^2 } \le C\ep^2
\ees
when $|\Re(Z)| \le k_1/\ep$ and $|\Im Z| \le 1/2$. This implies that
\be\label{near zero estimate 1}
|q|\le1/2 \implies \sup_{|K|\le k_1/\ep} \left \vert
 l^{-1}_\ep(K + iq)+\gamma_\beta^{-1}(1+(K+iq)^2)^{-1}
\right \vert \le C \ep^2.
\ee

Also, it should be evident that
\bes
(1+\Re(Z)^2)^{1/4} \le C \ep^{-1/2} \mand 
(1+\Re(Z)^2)^{1/4} |1+Z^2|^{-1} \le C
\ees
when
$|\Re(Z)| \le k_1/\ep$ and $|\Im Z| \le 1/2$.
These, the triangle inequality and some naive estimates allow us to conclude that
\be\label{near zero estimate 2}
|q|\le 1/2 \implies \sup_{|K| \le k_1/\ep } \left \vert 
{(1+K^2)^{1/4}  l^{-1}_\ep(K + iq)}
\right \vert \le C. 
\ee

{\it Estimates near $|Z| =K_\ep$:}
The defintions of $l_\ep$ and $\eta_2$ imply
$$
l_\ep^{-1}(Z)=  {\ep^2 \over m'_\beta(k_\ep)(\ep (Z-K_\ep)) + \eta_2(\ep Z)}.
$$
Some algebra takes us to
$$
l_\ep^{-1}(Z)= {\ep \over m'_\beta(k_\ep)(Z-K_\ep)\left( 1 + {\eta_2(\ep Z) \over m'_\beta(k_\ep)(\ep (Z-K_\ep))}\right)}.
$$
If we take $\ep$ sufficiently small then \eqref{near pole} implies, by the geometric series, that
$$
k_1/\ep\le|\Re(Z)| \le k_2/\ep \mand |\Im_\beta(Z)| \le b/\ep \implies 
\left \vert {1 \over  1 + {\eta_2(\ep Z) \over m'_\beta(k_\ep)(\ep (Z-K_\ep))} }\right \vert \le C.
$$
It is clear also that 
$$
k_1/\ep\le|\Re(Z)| \le k_2/\ep  \implies 
\left \vert {1 \over m'_\beta(k_\ep)(K+iq-K_\ep)}\right \vert \le Cq^{-1}.
$$
In this way we find that
\bes
|q| \le 1/2 \implies \sup_{k_1/\ep \le K \le k_2 \ep} |l_\ep^{-1}(K+iq)| \le C\ep q^{-1} \le C_q \ep.
\ees

Also
\bes
(1+\Re(Z)^2)^{1/4} \le C \ep^{-1/2} \mand 
 |1+Z^2|^{-1} \le C \ep^2
\ees
when
$k_1/\ep \le |\Re(Z)| \le k_2/\ep$ and $|\Im Z| \le 1/2$.
These, the triangle inequality and some naive estimates allow us to conclude that
\be\label{middle estimate 2}
|q|\le 1/2 \implies \sup_{k_1/\ep \le |K| \le k_2/\ep } \left \vert 
{(1+K^2)^{1/4}  l^{-1}_\ep(K + iq)}
\right \vert \le C_q \ep^{1/2}. 
\ee
and
\be\label{middle estimate 1}
|q|\le1/2 \implies \sup_{k_1/\ep \le |K| \le k_2/\ep} \left \vert
 l^{-1}_\ep(K + iq)+\gamma_\beta^{-1}(1+(K+iq)^2)^{-1}
\right \vert \le C_q \ep.
\ee

{\it Estimates for  $|Z|\gg0$:} 
Using the definition of $l_\ep$ and \eqref{near infinity} we have
\be\label{outer estimate 0}
k_2/\ep\le|\Re(Z)|  \mand |\Im_\beta(Z)| \le b/\ep \implies 
|l_\ep^{-1}(Z)|  \le C\ep^2 (1+|\ep Z|^2)^{-1/4}.
\ee
Next, it is easy to see that $(1+\Re(Z)^2)/(1+|\ep Z|^2) \le C \ep^{-2}$ when $|\Im_\beta(Z)|<1/2$.
Thus
\be\label{outer estimate 2}
|q|\le1/2 
\implies 
\sup_{K\ge k_2/\ep}| (1+K^2)^{1/4}l_\ep^{-1}(K+iq)|  \le C \ep^{3/2}.
\ee
It is also clear that, if $|\Im Z| < 1/2$, then 
$
\sup_{K\ge k_2/\ep} |1+Z^2|^{-1} \le C \ep^{2}. 
$
Thus \eqref{outer estimate 0} and the triangle inequality tell us
\be\label{outer estimate 1}
|q|\le1/2 
\implies 
\sup_{K\ge k_2/\ep} |l_\ep^{-1}(K+iq) +\gamma_\beta^{-1}(1+(K+iq)^2)^{-1}| \le C \ep^{2}.
\ee

Putting together  \eqref{near zero estimate 1}, \eqref{middle estimate 1} and \eqref{outer estimate 1} gives \eqref{desingularization estimate}.
Putting together  \eqref{near zero estimate 2}, \eqref{middle estimate 2} and \eqref{outer estimate 2} gives \eqref{smoothing estimate}.
We have proven Lemma \ref{pole analysis}.


\end{proof}
\begin{proof} (of Lemma \ref{suff}.)
The estimate \eqref{smoothing estimate} for 
$l_\ep^{-1}(Z)$ permits us to use Theorem \ref{beale1} and this proves Lemma \ref{suff} with no additional complications.
\end{proof}

\begin{proof} (of Lemma \ref{desing lemma}.)
The estimate \eqref{desingularization estimate}, together with the definition of $\P_\ep$, \eqref{pi prop} and the estimate \eqref{Pep estimate}
demonstrate, by way of Theorem \ref{beale1}, that
$$
\| \left(\L_\ep^{-1} +\gamma_\beta^{-1}(1-\partial_X^2)^{-1}\right) \P_\ep\|_{B(E^r_q,E^r_q)} \le C_q \ep.
$$
Thus the triangle inequality tells us that
$$
\|\G_\ep\| = \ep^{-1} \| \L_\ep^{-1}\P_\ep  +\gamma_\beta^{-1}(1-\partial_X^2)^{-1}\|_{B(E^r_q,E^r_q)} \le C_q + C \ep^{-1} \| (1-\partial_X^2)^{-1} \left(\P_\ep - 1 \right)\|_{B(E^r_q,E^r_q)}.
$$
Then we compute, using the definition of $\P_\ep$:
\bes\begin{split}
  \| (1-\partial_X^2)^{-1} \left(\P_\ep - 1 \right)\|_{B(E^r_q,E^r_q)} 
 =& \sup_{ F \in E^r_q, \|F\|_{r,q} =1} \| (1-\partial_X^2)^{-1} \left(\P_\ep - 1 \right) F\|_{r,q}\\
 =& \sup_{ F \in E^r_q, \|F\|_{r,q} =1} \| \left(\P_\ep - 1 \right) F\|_{r-2,q}\\
 =& \sup_{ F \in E^r_q, \|F\|_{r,q} =1} 2\chi_\ep^{-1} |\hat{F}(K_\ep)| \| \sigma_\beta \Phi_\ep^0 \|_{r-2,q}.
\end{split}\ees
Recalling \eqref{RL}, one has $ |\hat{F}(K_\ep)| \le C_{q} \ep^r \| F\|_{r,q}$. And, since
$\sigma_\beta(X) = s_1 \sech^2(s_2 X)$ and $\Phi_\ep^0 (X) = \cos(K_\ep X)$ (with $K_\ep = \O(1/\ep)$) we see that
$ \| \sigma_\beta \Phi_\ep^0 \|_{r-2,q} \le C_r \ep^{2-r}$.
$$
  \| (1-\partial_X^2)^{-1} \left(\P_\ep - 1 \right)\|_{B(E^r_q,E^r_q)}  \le C_r \ep^2 .
$$
And so we have
$
\|\G_\ep\|  \le C_q + C_r \ep \le C_{r,q}.
$
\end{proof}

Now we address Lemmas \ref{J2 lemma} and \ref{J3 lemma}. These are modeled on the proofs for the estimates found in Appendix E.4 of \cite{Faver-Wright}.
\begin{proof}(Lemma \ref{J2 lemma}).
We only address the second estimate since it implies the first.
First:
$$
\| \J_2 - \tilde{\J}_2\|_{r,q} = 2 \|\sigma_\beta \left(a (\Phi^a_\ep-\Phi^0_\ep) - \tilde{a} (\Phi^{\tilde{a}}_\ep-\Phi^0_\ep)\right) \|_{r,q}
$$
Then we use triangle inequality:
\be\label{here}
\| \J_2 - \tilde{\J}_2\|_{r,q} = 2|a| \|\sigma_\beta  (\Phi^a_\ep -\Phi^{\tilde{a}}_\ep)\|_{r,q}+ 2|a -\tilde{a}|\| \sigma_\beta (\Phi^{\tilde{a}}_\ep-\Phi^0_\ep) \|_{r,q}.
\ee

Next we recall the definition of $\Phi_\ep^a$ in \eqref{this is PHI} to get:
\be\label{annoying1}\begin{split}
\left \vert 
\Phi_\ep^a (X)- \Phi_\ep^{\tilde{a}}(X)
\right \vert &= \left \vert\phi_\ep^a (K_\ep^a X)- \phi_\ep^{\tilde{a}}(K_\ep^{\tilde{a}}X) \right \vert\\
& \le \left \vert\phi_\ep^a (K_\ep^a X) - \phi_\ep^{{a}}(K_\ep^{\tilde{a}}X) \right \vert + \left \vert\phi_\ep^a (K_\ep^{\tilde{a}} X) - \phi_\ep^{\tilde{a}}(K_\ep^{\tilde{a}}X) \right \vert.
\end{split}
\ee
The second term above can be estimated using \eqref{philip} in Theorem \ref{MV} to see
$$
\left \vert\phi_\ep^a (K_\ep^{\tilde{a}} X) - \phi_\ep^{\tilde{a}}(K_\ep^{\tilde{a}}X) \right \vert \le C|a-\tilde{a}|,
$$
independent of $X$.
We can use Taylor's theorem to control the first term in \eqref{annoying1}:
$$
\left \vert\phi_\ep^a (K_\ep^a X) - \phi_\ep^{{a}}(K_\ep^{\tilde{a}}X) \right \vert \le |K^a_\ep - K^{\tilde{a}}_\ep||X|
\|\partial_y \phi_\ep^a\|_{L^\infty}.
$$
Then we deploy \eqref{Klip} and \eqref{philip}
$$
\left \vert\phi_\ep^a (K_\ep^a X) - \phi_\ep^{{a}}(K_\ep^{\tilde{a}}X) \right \vert  \le C|a-\tilde{a}| |X| 
$$
for all $X$.
Thus we have
$$
\left \vert 
\Phi_\ep^a (X)- \Phi_\ep^{\tilde{a}}(X)
\right \vert \le C|a-\tilde{a}|(1+|X|).
$$
for any $X$.
The same sort of reasoning in a longer and more annoying argument can be used to show that
\be\label{annoying}
\left \vert 
\partial_X^r(\Phi_\ep^a (X)- \Phi_\ep^{\tilde{a}}(X))
\right \vert \le C_r \ep^{-r} |a-\tilde{a}|(1+|X|)
\ee
holds for all $X \in \R$.

Using \eqref{annoying} in \eqref{here}
gives
\be\begin{split}
\| \J_2 - \tilde{\J}_2\|_{r,q} \le C_r \ep^{-r} (|a|+|\tilde{a}|)|a-\tilde{a}| \|\sigma_\beta  (1+|\cdot|)\|_{r,q}
\end{split}
\ee
And since $\sigma_\beta$ is just a scaled $\sech^2$ function we have
\be\begin{split}
\| \J_2 - \tilde{\J}_2\|_{r,q} \le C_r \ep^{-r}(|a|+|\tilde{a}|)||a-\tilde{a}| .
\end{split}
\ee
\end{proof}

\begin{proof} (Lemma \ref{J3 lemma}) 
The second estimate is more complicated than the first, so we only prove it. Note, however, that the second does not imply the first.
First:
$$
\| \J_3 - \tilde{\J}_3\|_{r,0} = 2 \| a R {\Phi}_\ep^a - \tilde{a} \tilde{R} {\Phi}_\ep^{\tilde{a}}   \|_{r,0}
$$
The triangle inquality gives us
$$
\| \J_3 - \tilde{\J}_3\|_{r,0} \le  2|a| \|  R ({\Phi}_\ep^a - {\Phi}_\ep^{\tilde{a}})\|_{r,0} + 2|a-\tilde{a}|\|R {\Phi}_\ep^{\tilde{a}} \|_{r,0} + |\tilde{a}| \| (R-\tilde{R}) {\Phi}_\ep^{\tilde{a}}   \|_{r,0}.
$$
The last term on the right hand side of this is easily estimated by $C_r \ep^{-r}|\tilde{a}|\| R- \tilde{R}\|_{r,0}$.

The first two terms on the right hand side above can be handled almost identically to how we dealt with the terms on the right hand side in \eqref{here}, but with $R$ replacing
$\sigma_\beta$. We find that
$$
2|a| \|  R ({\Phi}_\ep^a - {\Phi}_\ep^{\tilde{a}})\|_{r,0} + 2|a-\tilde{a}|\|R {\Phi}_\ep^{\tilde{a}} \|_{r,0} \le C_r\ep^{-r} |a-\tilde{a}| \| R(1+|\cdot|)\|_{r,0}.
$$
Since we are assuming $R \in E^r_q$ with $q>0$, we have
$
 \| R(1+|\cdot|)\|_{r,0} \le C_{r,q} \| R\|_{r,q}.
$
Thus all together we find
$$
\| \J_3 - \tilde{\J}_3\|_{r,0} \le  C_{r,q} \ep^{-r} \left( (\|R\|_{r,q}+\|\tilde{R}\|_{r,q})|a-\tilde{a}| +  (|a|+|\tilde{a}|)\|R-\tilde{R}\|_{r,0}\right).
$$

\end{proof}

\bibliographystyle{plain}
\bibliography{CapWhitham}

\end{document}